\documentclass[11pt]{article}

\usepackage[margin=1in]{geometry}
\usepackage{setspace}
\usepackage{amsmath,amssymb,amsthm,mathtools}
\usepackage{mathrsfs}
\usepackage{graphicx}
\usepackage{booktabs}
\usepackage{array}
\usepackage{placeins}

\usepackage[round,authoryear]{natbib}
\usepackage[hidelinks]{hyperref}
\hypersetup{
  pdftitle={Tractable Relaxations of Multivariate Stochastic Dominance via Optimal Transport and CVaR},
  pdfauthor={Bar Light}
}

\theoremstyle{plain}
\newtheorem{theorem}{Theorem}[section]
\newtheorem{corollary}[theorem]{Corollary}
\newtheorem{proposition}[theorem]{Proposition}
\newtheorem{example}[theorem]{Example}
\newtheorem{definition}[theorem]{Definition}
\newtheorem{lemma}[theorem]{Lemma}

\theoremstyle{remark}
\newtheorem{remark}[theorem]{Remark}

\newcommand{\CVaR}{\operatorname{CVaR}}

\begin{document}

\title{Tractable Relaxations of Multivariate Stochastic Dominance via Optimal Transport and CVaR}
\author{Bar Light\protect\thanks{Business School and Institute of Operations Research and Analytics, National University of Singapore, Singapore.\ e-mail: \textsf{barlight@nus.edu.sg} }}
\maketitle

\thispagestyle{empty}

\begin{abstract}
Many operational decisions involve alternatives with several uncertain
attributes. When comparing such alternatives, stochastic dominance requires
every decision maker in a prescribed class to prefer one alternative to the
other. These orders have two known limitations: a single extreme decision maker can
rule out dominance, and multivariate dominance can be difficult to verify. To
address these limitations, we develop two relaxations of their standard
representations: compensated stochastic dominance (CSD) relaxes Strassen's
coupling representation, while reference-weighted stochastic dominance (RWSD)
relaxes the integral representation. We characterize the utility class
generated by CSD, recovering known almost stochastic dominance orders as
special cases, and characterize it for RWSD in several cases. Importantly, we show that CSD
and RWSD can each be checked by computing a single value, an optimal-transport
value for CSD and a CVaR value for RWSD, and verifying that this value is
nonpositive. This inequality characterization allows us to derive finite-sample
guarantees for both orders, even when the outcome dimension is large. An application to Olist e-commerce data shows that, even when empirical FOSD fails, CSD and RWSD quantify how small this failure is and what is needed to overcome it. 
\end{abstract}

\newpage

\section{Introduction}
\label{Sec:Introduction}

Operational alternatives often affect several uncertain outcomes jointly. Expected utility provides a coherent decision rule, but specifying a multivariate utility function can be difficult to justify, and the resulting ranking may be sensitive to misspecification. Stochastic dominance offers a robust alternative: it ranks two distributions only when every utility in a prescribed class agrees with the ranking \citep{MullerStoyan2002,ShakedShanthikumar2007}. The same robustness makes exact dominance an all-or-nothing criterion. A single utility that prefers the other distribution rules out dominance, whether that disagreement is isolated or widespread. The conclusion is therefore the same when a comparison narrowly misses dominance and when it fails by a large margin. Moreover, failure does not reveal what causes the disagreement or how much the dominance requirement must be relaxed to recover the ranking.

Strassen's theorem makes this limitation especially clear for multivariate first-order stochastic dominance (FOSD). It states that $Q$ dominates $P$ if the two distributions can be coupled so that the outcome from $Q$ is at least as large as the outcome from $P$ in every coordinate almost surely \citep{Strassen1965}. For finitely supported distributions, checking whether such a coupling exists can be formulated as a transportation problem. However, the requirement is exact: under the coupling, $Q$ cannot be lower than $P$ in any coordinate with positive probability. Thus, if every coupling contains even a small coordinate decrease, FOSD fails. This can be especially restrictive for  empirical distributions, where finite or noisy data can create small reversals. This binary decision rule limits the usefulness of FOSD in applications by treating small failures identically to large ones, without identifying which coordinates cause the failure, measuring its magnitude, or indicating how close the distributions are to dominance.

The integral representation of stochastic orders creates a closely related difficulty. A family of test functions (functions that assign a numerical value to each outcome) defines a stochastic order by requiring each function to have at least as high an expected value under $Q$ as under $P$; when all of these inequalities hold, $Q$ dominates $P$ \citep{Muller1997StochasticOrders}. For example, for FOSD, a standard family of test functions consists  of indicators of upper sets. Lower-orthant dominance uses test functions based on lower sets defined by coordinatewise thresholds, whereas increasing-concave, or second-order stochastic dominance (SOSD), uses increasing concave test functions. If even one test function has a lower expected value under $Q$ than under $P$, the corresponding dominance order fails. In this paper, we use the coupling and integral representations to relax stochastic dominance in two different ways and quantify how close the distributions are to satisfying the corresponding order.

Our first relaxation, compensated stochastic dominance (CSD), relaxes the coupling representation. CSD allows selected monotone scores (functions of one or more outcomes for which improving an outcome never lowers the score) to decrease under the coupling, but limits their expected weighted decrease to a fraction of their expected weighted increase.
We call this fraction the tolerance. At zero tolerance, no expected weighted decrease is allowed; with coordinate scores, this is exactly Strassen's FOSD condition. At tolerance one, CSD requires only that the expected weighted sum of the selected scores be at least as high under $Q$ as under $P$. Hence, CSD can compare a much wider range of distributions than FOSD. 
We study general monotone scores because some applications focus on 
functions of several outcomes together, not only on  individual coordinates.
Different score choices therefore generate different CSD comparisons tailored
to the application, as illustrated in Section~\ref{Sec:Utility}. One natural
choice is to use lower-orthant scores, which we study in
Section~\ref{Sec:CSDLowerOrthantScores}.

We show that one Wasserstein optimal-transport problem using the selected
scores determines CSD for every tolerance and gives the smallest tolerance
for which dominance holds. With coordinate scores, zero tolerance corresponds
to FOSD. If FOSD fails but the weighted sum of the coordinate means is at
least as high under $Q$ as under $P$, the same optimal-transport problem gives the smallest
fraction of expected weighted coordinate decreases relative to expected
weighted coordinate increases that must be allowed for dominance. Thus,  CSD quantifies how much the exact coupling
condition must be relaxed. The optimal coupling also shows which score
decreases are compensated by score increases.

We then use transport duality to characterize the utility class that generates
CSD. For linear scores, this gives bounds on the gradients of the admissible
utilities. With coordinate scores, CSD recovers substitution multivariate
almost stochastic dominance (MASD) and gives a Wasserstein characterization
and direct computational test for the order in
\citet{MullerScarsiniTsetlinWinkler2025}. With lower-orthant scores, we show that CSD
generates a family between multivariate FOSD and second-degree lower-orthant
dominance; in one dimension, it recovers the family between FOSD and SOSD in
\citet{MullerScarsiniTsetlinWinkler2017}. Thus, our results
connect the compensation allowed in the coupling to the utility restrictions
that generate the resulting dominance order.

Our second relaxation, reference-weighted stochastic dominance (RWSD), starts from the test functions of an integral stochastic order. Each test function gives a difference between its expected value under $Q$ and under $P$, and the exact order requires every such difference to be nonnegative. RWSD uses lower and upper reference measures to set bounds on how weight can be assigned across the test functions, and dominance holds when every weighted average of the expected-value differences satisfying these bounds is nonnegative. These reference measures can therefore control how much emphasis is placed on particular attributes, tail events, or other features represented by the test functions. This construction applies to any stochastic order with an integral representation.

We show that checking RWSD reduces to a CVaR calculation. In particular, 
for finitely many test functions, checking RWSD reduces to a continuous
knapsack problem, which can be solved by sorting the expected-value
differences. RWSD can therefore be interpreted as quantifying how much weight
can be placed on these test functions before dominance fails. We develop RWSD versions of FOSD, lower-orthant dominance,
and increasing-concave dominance, and show how the reference measures restrict
marginal utilities, curvature, and interactions between attributes when such
utility interpretations are available.

Importantly, our optimal-transport and CVaR characterizations reduce both CSD
and RWSD to checking whether a single value is nonpositive. 
Because each order can be verified by a single inequality, we are able to derive finite-sample guarantees directly from empirical distributions (see Section \ref{sec:FiniteSamFORBOTHORDERS}).
 For CSD, we prove a bound that holds simultaneously for every
tolerance, so the same sample gives an upper confidence bound on the smallest
tolerance without testing a grid of tolerances. For RWSD, we prove a corresponding
finite-sample bound that holds simultaneously over every weighting permitted
by the reference measures. Once the score or test-function families are fixed, neither guarantee contains an additional term that
grows explicitly with the outcome dimension. Thus, when distributions are
observed through finite samples, both orders provide tractable computations
for dominance together with finite-sample guarantees for certifying population
dominance.

The two relaxations are useful for different modeling questions. CSD is natural when decreases in some selected performance measures may be compensated by increases in others, whereas RWSD is natural when the concern is how much emphasis may be placed on particular test-function comparisons, such as lower-tail events.  In terms of computation, the CSD transport problem grows with the support sizes of the empirical distributions, whereas the RWSD calculation grows with the number of selected test-functions. RWSD may therefore be particularly attractive in very large empirical applications. 

Lastly, Section~\ref{Sec:OlistApplication} uses Olist e-commerce data to
illustrate CSD and RWSD. We deliberately
construct two groups of sellers so that one group has much better pre-2018
fulfillment performance, making dominance especially plausible. These sellers
fulfill customer orders by handing them to carriers, and we compare the two
groups using later delivery and customer-review outcomes. The group with
better pre-2018 fulfillment performance has higher means for every service
measure we consider. Nevertheless, empirical FOSD fails because this group
performs slightly worse in a small part of the distribution of one
carrier-transit measure. CSD and RWSD show that this failure is very small and
identify what causes it. CSD further shows how much improvement in the other
service measures is needed to compensate for the decrease in carrier transit.
Thus, the application illustrates a natural difficulty with empirical multivariate FOSD: when many outcomes are compared, a small unfavorable difference in just one coordinate can rule out dominance. CSD and RWSD instead quantify how small the failure is and identify its source.

\subsection{Related Literature}
\label{Sec:RelatedLiterature}

Our paper is related to three strands of literature.

\noindent \textbf{Stochastic dominance and optimal transport}. Several papers use optimal transport to quantify departures from stochastic dominance, either by measuring violations of the order within a coupling or by measuring the distance to a distribution that satisfies exact dominance 
\citep{DelBarrioCuestaAlbertosMatran2018,
KamihigashiStachurski2020,JunovaKopa2025,DentchevaYi2025}.
A transport-based paper more closely related to CSD is
\citet{RiouxNitsureRigottiGreenewaldMroueh2024}, who also start from the
coupling characterization of multivariate FOSD but develop smooth,
 regularized optimal-transport statistics for measuring departures
from exact dominance. Independent and concurrent work by \citet{MullerWiesel2026} develops a more
abstract optimal-transport framework for almost stochastic dominance on general
Polish spaces.\footnote{In the
univariate case, \citet{Muller2024StochasticOrders} had earlier studied
robustness of almost stochastic dominance under Wasserstein perturbations.}
Their framework uses asymmetric quasi-pseudo-metric costs on general Polish
spaces and derives a corresponding dual test-function characterization. For
substitution-MASD, their multivariate specialization is equivalent to our
coordinate-score CSD characterization. Our paper focuses
additionally on tractable computation,  finite-sample guarantees,
and RWSD. 

\noindent \textbf{Utility-based relaxations of stochastic dominance}.
A popular approach relaxes stochastic dominance while retaining a
preference interpretation. \citet{LeshnoLevy2002} introduced almost
stochastic dominance, and \citet{TsetlinWinklerHuangTzeng2015} developed general results linking relaxed integral conditions to restricted utility
classes. These ideas have been used in stochastic optimization and portfolio
choice \citep{LizyayevRuszczynski2012,LuoChenJaillet2025}. Closely related
orders restrict marginal utility, curvature, or other features of the utility
class. \citet{MullerScarsiniTsetlinWinkler2017} construct a continuum between
FOSD and SOSD, while \citet{LightPerlroth2021} introduce a parametric family
of concave-utility orders that relax SOSD and higher-orders.
Our paper is also related to preference-robust optimization under incomplete
utility information. \citet{ArmbrusterDelage2015} optimize decisions over
utility classes consistent with partial preference and shape information,
while \citet{HuStepanyan2017} introduce a reference-based almost-dominance
relation that quantifies how robustly one alternative is preferred. We instead study pairwise stochastic orders derived from coupling and integral
representations.

For multivariate outcomes, \citet{TsetlinWinkler2018} extend almost stochastic
dominance to multiattribute utility functions.
\citet{MullerScarsiniTsetlinWinkler2025} characterize multivariate almost
stochastic dominance through bounds on marginal utilities and compensated
transfers. Choosing coordinate scores makes CSD exactly their
substitution-MASD order and provides an optimal-transport characterization of
that multivariate almost stochastic dominance. General monotone scores extend the same compensation logic
to joint, application-specific performance measures and allow tradeoffs that
need not be coordinatewise. In particular, we show that lower-orthant scores extend the univariate
orders of \citet{MullerScarsiniTsetlinWinkler2017} to vector outcomes. RWSD is also related, more distantly, to weighted stochastic-dominance formulations.
\citet{HuHomemDeMelloMehrotra2014} use random weights to scalarize multivariate outcomes and then apply stochastic-dominance comparisons to the resulting scalar outcomes, while \citet{Tan2015WeightedASD} introduces a weighting function into a particular almost-stochastic-dominance criterion. In contrast, RWSD restricts how weight may be distributed across the test-function comparisons that generate an integral stochastic order and requires the ranking to hold for every admissible weighting. This construction applies across integral stochastic orders and yields a simple CVaR characterization.

\noindent \textbf{Statistical inference for stochastic dominance}. The
statistical literature develops tests for the population inequalities that
define stochastic dominance, typically using asymptotic approximations,
bootstrap, or subsampling procedures
\citep{DavidsonDuclos2000,BarrettDonald2003,
LintonMaasoumiWhang2005}. Recent work extends inference to almost-dominance
measures and transport-based comparisons
\citep{BailloCarcamoMoraCorral2025,SongSun2026,
RiouxNitsureRigottiGreenewaldMroueh2024}. Our results use the new transport and CVaR characterizations to obtain explicit,
nonasymptotic guarantees for population dominance. Whereas standard inference
for stochastic dominance typically works with collections of inequalities
indexed by thresholds or test functions, our characterizations reduce CSD and
RWSD to the sign of a single optimization value. We derive concentration bounds
for the empirical quantities determining these values, so a sufficiently
negative empirical bound certifies population dominance. 

\section{Compensated Stochastic Dominance}
\label{Sec:CSD}

Let $D\subseteq\mathbb R^d$ be a nonempty Borel set. Throughout this section,
$P$ and $Q$ are Borel probability measures supported on $D$ with finite first
moments, $\mathcal C(P,Q)$ denotes their set of couplings, and
$t_+:=\max\{t,0\}$. If $\kappa\in\mathcal C(P,Q)$, then $\kappa$ is the
joint distribution of a pair $(X,Y)$ with $X\sim P$ and $Y\sim Q$; below,
$x$ and $y$ denote possible realizations of $X$ and $Y$, respectively.

As explained in the introduction, compensated stochastic dominance (CSD)
relaxes Strassen's coupling representation of FOSD. Fix positive weights
$\beta_1,\ldots,\beta_d$ and $\gamma\in[0,1]$. We call a coupling
$\kappa\in\mathcal C(P,Q)$ admissible when
\begin{equation}
\label{Eq:CoordinateCompensatedStrassen}
\int_{D\times D}\sum_{i=1}^d\beta_i(x_i-y_i)_+\,\kappa(dx,dy)
\le
\gamma
\int_{D\times D}\sum_{i=1}^d\beta_i(y_i-x_i)_+\,\kappa(dx,dy).
\end{equation}
At $\gamma=0$, $x_i\le y_i$ almost surely for every coordinate, as in
Strassen's characterization of FOSD. For $\gamma>0$, the expected weighted
sum of the coordinate shortfalls $(x_i-y_i)_+$ may be positive, but it cannot
exceed $\gamma$ times the expected weighted sum in the opposite direction.
If $x_i>y_i$ for some coordinate, dominance may still hold when the
coordinates and coupled outcomes for which $y_i>x_i$ provide enough
compensation with $\gamma$ controlling the compensation amount. 

To allow more general trade-offs beyond coordinates, let
$(\Theta,\mathscr A,\lambda)$ be a finite measure space and let
$s:\Theta\times\mathbb R^d\to\mathbb R$ be jointly measurable. For each
$\theta\in\Theta$, we call $s_\theta(x):=s(\theta,x)$ a score. We assume that
every score is coordinatewise nondecreasing,
$\int_\Theta |s_\theta(0)|\,\lambda(d\theta)<\infty$, and, for some finite
constant $\overline L_s$,
$\int_\Theta |s_\theta(x)-s_\theta(y)|\,\lambda(d\theta)
\le \overline L_s\|x-y\|_1$ for all $x,y\in\mathbb R^d$.
 Together with the finite first moments of $P$ and
$Q$, these are regularity conditions that make all expectations below finite.
Key examples include the lower-orthant indicators in
Section~\ref{Sec:CSDLowerOrthantScores} and finite families of linear scores
$s_j(x)=a^j\cdot x$ with $a^j\in\mathbb R_+^d$.
The score analogue of \eqref{Eq:CoordinateCompensatedStrassen} is
\begin{equation}
\label{Eq:ScoreCompensatedStrassen}
\int_{D\times D}\int_\Theta
(s_\theta(x)-s_\theta(y))_+
\,\lambda(d\theta)\,\kappa(dx,dy)
\le
\gamma
\int_{D\times D}\int_\Theta
(s_\theta(y)-s_\theta(x))_+
\,\lambda(d\theta)\,\kappa(dx,dy).
\end{equation}
 The measure $\lambda$  allows more weight to be placed on scores that matter more in the application. The parameter $\gamma$ again can be viewed as a compensation parameter that limits the expected weighted score
decrease to a fraction of the expected weighted score increase.

\begin{definition}[Compensated stochastic dominance]
\label{Def:ScoreCompensatedDominance}
For $\gamma\in[0,1]$, we write
$P\preceq_{s,\lambda,\gamma}^{\rm CSD}Q$ if there exists
$\kappa\in\mathcal C(P,Q)$ satisfying
\eqref{Eq:ScoreCompensatedStrassen}.
\end{definition}

The coordinate condition in
\eqref{Eq:CoordinateCompensatedStrassen} is the special case
$\Theta=\{1,\ldots,d\}$, $s_i(x)=x_i$, and
$\lambda(\{i\})=\beta_i$. At $\gamma=0$, the nonnegative score-decrease integral must vanish, so
$s_\theta(x)\le s_\theta(y)$ for
$\lambda\otimes\kappa$-almost every $(\theta,x,y)$. In addition, if for every
$x,y$, $s_\theta(x)\le s_\theta(y)$ for $\lambda$-almost every $\theta$
implies $x\le y$ then the coupling is monotone and Strassen's theorem gives
FOSD. We next show that only one transport problem checks CSD for every
$\gamma\in[0,1]$.

\subsection{Optimal Transport Characterization of CSD}
\label{sec:transport}

We will need the following notation. 
Define the weighted sum of the scores, the total absolute difference between
the selected score values of $x$ and $y$, and the score decrease when the
outcome moves from $x$ to $y$ by
\begin{align}
\label{Eq:AggregateScore}
S(x)
&:=
\int_\Theta s_\theta(x)\,\lambda(d\theta),\\
\label{Eq:TotalScoreDifferenceCost}
d_s(x,y)
&:=
\int_\Theta|s_\theta(x)-s_\theta(y)|\,\lambda(d\theta),\\
\label{Eq:ScoreDecreaseCost}
\ell_s(x,y)
&:=
\int_\Theta(s_\theta(x)-s_\theta(y))_+\,\lambda(d\theta).
\end{align}
The assumptions make these quantities finite and continuous; see
Lemma~\ref{Lem:ScoreRegularity} in Appendix~\ref{App:CSDPreliminaries}.
The characterization uses also the notation 
\begin{align}
\label{Eq:ScoreWassersteinValue}
W_s(P,Q)
&:=
\inf_{\kappa\in\mathcal C(P,Q)}
\int_{D\times D}d_s(x,y)\,\kappa(dx,dy),\\
\label{Eq:AggregateScoreDifference}
\Delta_s(P,Q)
&:=
\int_D S(y)\,Q(dy)-\int_D S(x)\,P(dx).
\end{align}
Thus $W_s(P,Q)$ is the smallest expected total absolute difference between
the selected score values of paired outcomes, while $\Delta_s(P,Q)$ is the
increase in the expected weighted sum $S$ from $P$ to $Q$. Note that only the first
quantity depends on the coupling. Lemma~\ref{Lem:ScoreProfileReduction}
shows that $W_s(P,Q)$ is the usual $1$-Wasserstein distance after each outcome
is mapped to its selected score values.

Define the smallest expected score decrease over all couplings by
\begin{equation}
\label{Eq:MinimumScoreDecrease}
L_s^\star(P,Q)
:=
\inf_{\kappa\in\mathcal C(P,Q)}
\int_{D\times D}\ell_s(x,y)\,\kappa(dx,dy)
\end{equation}
and for $\gamma\in[0,1]$ define
\[
\mathfrak D_{s,\lambda,\gamma}(P,Q)
:=
\inf_{\kappa\in\mathcal C(P,Q)}
\int_{D\times D}
\{\ell_s(x,y)-\gamma\ell_s(y,x)\}\,\kappa(dx,dy).
\]
 The theorem
below proves that the infimum is attained, and hence, 
Definition~\ref{Def:ScoreCompensatedDominance} is equivalent to
$\mathfrak D_{s,\lambda,\gamma}(P,Q)\le0$. Importantly, the theorem also shows that only one
calculation of $W_s(P,Q)$ gives this value for every $\gamma$.

\begin{theorem}[Optimal transport characterization of CSD]
\label{Thm:CompensatedScoreTransport}
For every $0\le\gamma\le1$,
\begin{equation}
\label{Eq:CSDMeanWassersteinIdentity}
\mathfrak D_{s,\lambda,\gamma}(P,Q)
=
(1-\gamma)L_s^\star(P,Q)-\gamma\Delta_s(P,Q)
=
\frac{1-\gamma}{2}W_s(P,Q)
-
\frac{1+\gamma}{2}\Delta_s(P,Q).
\end{equation}
The infima are attained and for $\gamma<1$, the couplings that minimize
$\mathfrak D_{s,\lambda,\gamma}(P,Q)$ are exactly those that attain
\eqref{Eq:ScoreWassersteinValue}.
\end{theorem}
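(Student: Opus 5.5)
The argument rests on two pointwise identities for the score cost. For every $x,y$ and every $\theta$ we have $(a)_+-(-a)_+=a$ and $(a)_++(-a)_+=|a|$ with $a=s_\theta(x)-s_\theta(y)$. Integrating against $\lambda$, which is legitimate by the integrability and Lipschitz assumptions (Lemma~\ref{Lem:ScoreRegularity}), gives
\[
\ell_s(x,y)-\ell_s(y,x)=S(x)-S(y),\qquad \ell_s(x,y)+\ell_s(y,x)=d_s(x,y).
\]
Solving these, $\ell_s(y,x)=\tfrac12\{d_s(x,y)+S(y)-S(x)\}$ and $\ell_s(x,y)=\tfrac12\{d_s(x,y)-S(y)+S(x)\}$.

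\textbf{Rewriting the integrand.} Substituting these into the integrand gives
\[
\ell_s(x,y)-\gamma\ell_s(y,x)=\tfrac{1-\gamma}{2}d_s(x,y)-\tfrac{1+\gamma}{2}\{S(y)-S(x)\}.
\]
Equivalently, using the first identity, it equals $(1-\gamma)\ell_s(x,y)-\gamma\{S(y)-S(x)\}$. Now integrate against any $\kappa\in\mathcal C(P,Q)$. The term in $S$ depends only on the marginals, since $S$ is $L^1$ under $P$ and $Q$ by finite first moments and the Lipschitz bound $|S(x)-S(0)|\le\overline L_s\|x\|_1$. It therefore integrates to $\Delta_s(P,Q)$ for every coupling. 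Taking the infimum over $\kappa$ and using $1-\gamma\ge0$ gives both expressions in \eqref{Eq:CSDMeanWassersteinIdentity}. The two expressions are consistent because integrating the second identity shows $L_s^\star(P,Q)=\tfrac12\{W_s(P,Q)-\Delta_s(P,Q)\}$. For $\gamma=1$ the coupling-dependent term vanishes, so every coupling is trivially optimal.

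\textbf{Attainment.} It remains to show that the infimum in \eqref{Eq:ScoreWassersteinValue} is attained. I would invoke the standard existence theorem for optimal transport with a lower semicontinuous cost bounded below. The cost $d_s$ is continuous by Lemma~\ref{Lem:ScoreRegularity} and nonnegative. The set $\mathcal C(P,Q)$ is tight, hence weakly compact by Prokhorov, though some care is needed if $D$ is not closed. I would handle this by working on $\mathbb R^d$: $P$ and $Q$ are Borel measures concentrated on $D$, and any coupling of them is concentrated on $D\times D$. The cost is dominated by $\overline L_s(\|x\|_1+\|y\|_1)$, which is integrable, so the integral is finite along the whole coupling set. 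Lower semicontinuity of $\kappa\mapsto\int d_s\,d\kappa$ then yields a minimizer. Alternatively, Lemma~\ref{Lem:ScoreProfileReduction} identifies $W_s$ with an ordinary $1$-Wasserstein distance between pushforwards; existence there is classical, but the pulled-back coupling would need to be lifted via gluing, so the direct argument seems cleaner.

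\textbf{Characterizing minimizers and the main obstacle.} For $\gamma<1$ the objective equals $\tfrac{1-\gamma}{2}\int d_s\,d\kappa$ plus a constant, with a strictly positive coefficient. The minimizers of $\mathfrak D_{s,\lambda,\gamma}$ therefore coincide exactly with the $W_s$-optimal couplings. The main point requiring care is justifying the integral manipulations: splitting the integrals and exchanging $\int_\Theta$ with $\int\kappa$ requires finiteness, which follows from Fubini. Joint measurability is assumed, and the bound $\int_\Theta|s_\theta(x)|\,\lambda(d\theta)\le\int_\Theta|s_\theta(0)|\,\lambda(d\theta)+\overline L_s\|x\|_1$ supplies integrability. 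Beyond this, the attainment step above is the only non-algebraic ingredient.
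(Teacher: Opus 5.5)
Your proposal is correct and follows essentially the same route as the paper: the identities $t_++(-t)_+=|t|$ and $t_+-(-t)_+=t$ reduce the signed objective to $\frac{1-\gamma}{2}\int d_s\,d\kappa-\frac{1+\gamma}{2}\Delta_s(P,Q)$, and attainment comes from Prokhorov compactness of $\mathcal C(P,Q)$ on $\mathbb R^d\times\mathbb R^d$ together with weak lower semicontinuity of the continuous nonnegative cost. The only cosmetic difference is that you do the algebra pointwise in $(x,y)$ before integrating, whereas the paper works directly with the integrated quantities $L_\kappa$ and $G_\kappa$.
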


To get some intuition on this result first consider the extreme cases.  
At $\gamma=0$, equation~\eqref{Eq:CSDMeanWassersteinIdentity} gives
$\mathfrak D_{s,\lambda,0}(P,Q)=L_s^\star(P,Q)$. CSD therefore holds exactly
when some coupling has zero expected score decrease. If the score family
implies the coordinatewise order, this is Strassen's monotone coupling and
hence FOSD. At $\gamma=1$,
$\mathfrak D_{s,\lambda,1}(P,Q)=-\Delta_s(P,Q)$, so CSD requires only that
the expected weighted sum of the scores under $Q$ be at least as large as
under $P$.
For $0<\gamma<1$, dominance in CSD depends on the two quantities. The expected weighted score difference provides one measure of the overall improvement under $Q$. Among all couplings of $P$ and $Q$, the best coupling determines the smallest expected score decrease that must be compensated by expected score increase. A smaller $\gamma$ requires more expected score increase to compensate for each unit of expected score decrease. The following corollary formalizes how these two quantities determine dominance in CSD.

\begin{corollary}
\label{Cor:CSDWassersteinCriterion}
For every $\gamma\in[0,1]$,
\begin{equation}
\label{Eq:CSDWassersteinCriterion}
P\preceq_{s,\lambda,\gamma}^{\rm CSD}Q
\quad\Longleftrightarrow\quad
\Delta_s(P,Q)
\ge
\frac{1-\gamma}{1+\gamma}W_s(P,Q).
\end{equation}
\end{corollary}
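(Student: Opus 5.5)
The corollary follows directly from Theorem~\ref{Thm:CompensatedScoreTransport}; the only real work is tracking the sign of $1+\gamma$ and the attainment of the infimum. By Definition~\ref{Def:ScoreCompensatedDominance}, $P\preceq_{s,\lambda,\gamma}^{\rm CSD}Q$ means that some $\kappa\in\mathcal C(P,Q)$ satisfies \eqref{Eq:ScoreCompensatedStrassen}. Written with the notation \eqref{Eq:ScoreDecreaseCost}, the left side of \eqref{Eq:ScoreCompensatedStrassen} is $\int\ell_s(x,y)\,\kappa(dx,dy)$ and the right side is $\gamma\int\ell_s(y,x)\,\kappa(dx,dy)$. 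All of these integrals are finite by Lemma~\ref{Lem:ScoreRegularity} and the finite first moments. So the condition is exactly $\int\{\ell_s(x,y)-\gamma\ell_s(y,x)\}\,d\kappa\le0$.

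Step one shows that CSD holds if and only if $\mathfrak D_{s,\lambda,\gamma}(P,Q)\le0$.

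\emph{Forward direction.} If some admissible $\kappa$ exists, then the infimum defining $\mathfrak D_{s,\lambda,\gamma}(P,Q)$ is at most the value at $\kappa$, which is $\le0$.

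\emph{Converse.} Theorem~\ref{Thm:CompensatedScoreTransport} states that the infimum is attained. A minimizing coupling $\kappa^\star$ therefore has objective value $\mathfrak D_{s,\lambda,\gamma}(P,Q)\le0$, so it satisfies \eqref{Eq:ScoreCompensatedStrassen}. This is the one point where attainment matters. Without it, a nonpositive infimum would not by itself produce an admissible coupling.

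Step two substitutes the second equality in \eqref{Eq:CSDMeanWassersteinIdentity}. The condition $\mathfrak D_{s,\lambda,\gamma}(P,Q)\le0$ becomes
\[
\frac{1-\gamma}{2}W_s(P,Q)\le\frac{1+\gamma}{2}\Delta_s(P,Q).
\]
Since $\gamma\in[0,1]$, we have $1+\gamma\ge1>0$. Dividing both sides by $(1+\gamma)/2$ preserves the inequality and gives \eqref{Eq:CSDWassersteinCriterion}. Both $W_s(P,Q)$ and $\Delta_s(P,Q)$ are finite real numbers, again by Lemma~\ref{Lem:ScoreRegularity}, so no $\infty-\infty$ issue arises.

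The endpoint cases need no separate treatment, but they serve as a check. At $\gamma=1$ the criterion reads $\Delta_s\ge0$, which agrees with the remark after the theorem. At $\gamma=0$ it reads $\Delta_s\ge W_s$. Since $d_s(x,y)\ge S(y)-S(x)$ pointwise, we have $\Delta_s\le W_s$, so this forces $W_s=\Delta_s$, which is equivalent to $L_s^\star=0$.

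There is no substantive obstacle. The only delicate point is invoking attainment from Theorem~\ref{Thm:CompensatedScoreTransport} in the reverse implication.
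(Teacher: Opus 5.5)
Your proposal is correct and follows the paper's own route. Attainment in Theorem~\ref{Thm:CompensatedScoreTransport} makes CSD equivalent to $\mathfrak D_{s,\lambda,\gamma}(P,Q)\le0$, and the second equality in \eqref{Eq:CSDMeanWassersteinIdentity}, divided by $(1+\gamma)/2>0$, then gives the criterion.
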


The same inequality gives the basic order properties.

\begin{proposition}[Order properties]
\label{Prop:CSDOrderStructure}
For fixed $\gamma\in[0,1]$, CSD is reflexive and transitive. If
$P_i\preceq_{s,\lambda,\gamma}^{\rm CSD}Q_i$ for $i=1,2$, then, for every
$\alpha\in[0,1]$,
\[
\alpha P_1+(1-\alpha)P_2
\preceq_{s,\lambda,\gamma}^{\rm CSD}
\alpha Q_1+(1-\alpha)Q_2.
\]
Moreover, $P\preceq_{s,\lambda,\gamma}^{\rm CSD}Q$ implies
$P\preceq_{s,\lambda,\gamma'}^{\rm CSD}Q$ whenever
$0\le\gamma\le\gamma'\le1$. If $\gamma<1$, both
$P\preceq_{s,\lambda,\gamma}^{\rm CSD}Q$ and
$Q\preceq_{s,\lambda,\gamma}^{\rm CSD}P$ hold if and only if
$W_s(P,Q)=0$.
\end{proposition}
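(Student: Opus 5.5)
The plan is to work entirely through Corollary~\ref{Cor:CSDWassersteinCriterion}. For each fixed $\gamma$, it rewrites $P\preceq_{s,\lambda,\gamma}^{\rm CSD}Q$ as the single inequality $\Delta_s(P,Q)\ge c_\gamma W_s(P,Q)$, where $c_\gamma:=(1-\gamma)/(1+\gamma)\in[0,1]$. I will use three structural facts:
\begin{itemize}
\item[(i)] $\Delta_s$ is antisymmetric and additive along chains, $\Delta_s(P,R)=\Delta_s(P,Q)+\Delta_s(Q,R)$, and it is affine (linear) in the pair $(P,Q)$ under mixtures.
\item[(ii)] $W_s$ is a pseudometric. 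It is symmetric, since $d_s$ is symmetric, and it satisfies the triangle inequality. The cleanest route is Lemma~\ref{Lem:ScoreProfileReduction}: $W_s$ is the ordinary $1$-Wasserstein distance between the pushforwards of $P$ and $Q$ under the score-profile map $x\mapsto(s_\theta(x))_\theta$ into $L^1(\lambda)$. Alternatively, glue two optimal couplings, which are attained by Theorem~\ref{Thm:CompensatedScoreTransport}, and use the pointwise triangle inequality for $d_s$.
\item[(iii)] $W_s$ is jointly convex under mixtures: if $\kappa_i$ is optimal for $(P_i,Q_i)$, then $\alpha\kappa_1+(1-\alpha)\kappa_2$ is a coupling of the mixtures, so $W_s(\alpha P_1+(1-\alpha)P_2,\,\alpha Q_1+(1-\alpha)Q_2)\le \alpha W_s(P_1,Q_1)+(1-\alpha)W_s(P_2,Q_2)$.
\end{itemize}

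With these facts, each property follows in a line.
\begin{itemize}
\item \textbf{Reflexivity.} $\Delta_s(P,P)=0$, and $W_s(P,P)=0$ via the diagonal coupling.
\item \textbf{Transitivity.} Add the two inequalities and use the triangle inequality together with $c_\gamma\ge0$: $\Delta_s(P,R)\ge c_\gamma(W_s(P,Q)+W_s(Q,R))\ge c_\gamma W_s(P,R)$.
\item \textbf{Mixtures.} Linearity of $\Delta_s$ and convexity of $W_s$ give $\Delta_s(\text{mix})=\alpha\Delta_s(P_1,Q_1)+(1-\alpha)\Delta_s(P_2,Q_2)\ge c_\gamma[\alpha W_s(P_1,Q_1)+(1-\alpha)W_s(P_2,Q_2)]\ge c_\gamma W_s(\text{mix})$.
\item \textbf{Monotonicity in $\gamma$.} $c_\gamma$ is decreasing in $\gamma$ and $W_s\ge0$. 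Hence $\Delta_s\ge c_\gamma W_s\ge c_{\gamma'}W_s$ for $\gamma\le\gamma'$.
\item \textbf{Antisymmetry statement.} If both directions hold, then $\Delta_s(P,Q)\ge c_\gamma W_s$ and $-\Delta_s(P,Q)\ge c_\gamma W_s$. Adding gives $0\ge 2c_\gamma W_s(P,Q)$, and since $c_\gamma>0$ for $\gamma<1$, we get $W_s(P,Q)=0$. Conversely, suppose $W_s(P,Q)=0$. Take an optimal coupling $\kappa$, which exists by Theorem~\ref{Thm:CompensatedScoreTransport}. Then $d_s(x,y)=0$ for $\kappa$-a.e.\ $(x,y)$, so $|S(x)-S(y)|\le d_s(x,y)=0$ a.e., and therefore $\Delta_s(P,Q)=\int (S(y)-S(x))\,d\kappa=0$. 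Both inequalities then hold with equality $0\ge0$.
\end{itemize}

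The only step needing care is the triangle inequality for $W_s$. First, the outcomes live on a general Borel $D\subseteq\mathbb R^d$, and the mixtures $\alpha P_1+(1-\alpha)P_2$ still have finite first moments and are supported on $D$, so the Corollary applies to them. Second, integrability of $d_s$ must hold under gluing. This is guaranteed by the Lipschitz bound $d_s(x,y)\le\overline L_s\|x-y\|_1$ together with finite first moments. I would use the gluing lemma on the Polish space $\mathbb R^d$, take couplings $\kappa_{PQ},\kappa_{QR}$ that attain $W_s$ (or are $\varepsilon$-optimal, if one prefers not to invoke attainment), form $(X,Y,Z)$, and integrate $d_s(x,z)\le d_s(x,y)+d_s(y,z)$. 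That $d_s$ satisfies this pointwise triangle inequality is immediate from $|a-c|\le|a-b|+|b-c|$ under $\int_\Theta\cdot\,d\lambda$. Everything else is algebra on the scalar criterion.
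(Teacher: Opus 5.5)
Your proposal is correct and follows essentially the same route as the paper. Like the paper, you reduce everything to the scalar criterion $\Delta_s(P,Q)\ge c_\gamma W_s(P,Q)$ of Corollary~\ref{Cor:CSDWassersteinCriterion}, take the triangle inequality for $W_s$ from Lemma~\ref{Lem:ScoreProfileReduction}, get convexity of $W_s$ by mixing couplings and nesting from monotonicity of $c_\gamma$, and use $|\Delta_s(P,Q)|\le W_s(P,Q)$ for the converse of the last claim. You check this last bound explicitly via an optimal coupling and $|S(x)-S(y)|\le d_s(x,y)$, whereas the paper simply states it; your gluing-lemma alternative for the triangle inequality is also valid but not needed.
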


Define the smallest tolerance for which CSD dominance holds by
$\gamma_{s,\lambda}^\star(P,Q):=\inf\{\gamma\in[0,1]:
P\preceq_{s,\lambda,\gamma}^{\rm CSD}Q\}$, with
$\inf\varnothing:=+\infty$. The next corollary characterizes the smallest tolerance for which dominance in CSD holds. This value is useful in applications because it gives the minimum amount of compensation required for dominance.

\begin{corollary}[Smallest tolerance]
\label{Cor:CSDCriticalGamma}
The smallest tolerance is
\begin{equation}
\label{Eq:CSDCriticalGamma}
\gamma_{s,\lambda}^\star(P,Q)
=
\begin{cases}
0,
& W_s(P,Q)=0,\\[1mm]
\displaystyle
\frac{W_s(P,Q)-\Delta_s(P,Q)}
     {W_s(P,Q)+\Delta_s(P,Q)},
& W_s(P,Q)>0\text{ and }\Delta_s(P,Q)\ge0,\\[4mm]
+\infty,
& \Delta_s(P,Q)<0.
\end{cases}
\end{equation}
\end{corollary}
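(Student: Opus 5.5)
The plan is to read off the smallest tolerance directly from Corollary~\ref{Cor:CSDWassersteinCriterion}. By that corollary, for each $\gamma\in[0,1]$, $P\preceq_{s,\lambda,\gamma}^{\rm CSD}Q$ holds exactly when $\Delta_s(P,Q)\ge \frac{1-\gamma}{1+\gamma}W_s(P,Q)$. Since $W_s\ge0$, the right-hand side is a nonnegative function of $\gamma$. The map $\gamma\mapsto\frac{1-\gamma}{1+\gamma}$ is continuous and strictly decreasing on $[0,1]$, running from $1$ to $0$. So the set of admissible $\gamma$ is a closed up-interval of $[0,1]$, possibly empty, and the task reduces to locating its left endpoint. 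I will split into the three cases of \eqref{Eq:CSDCriticalGamma}, writing $W=W_s(P,Q)$ and $\Delta=\Delta_s(P,Q)$.

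\emph{Case $\Delta<0$.} The right-hand side is $\ge0>\Delta$ for every $\gamma$, so no $\gamma$ works and the infimum is $+\infty$. A quick sanity check: the condition at $\gamma=1$ is just $\Delta\ge0$, and monotonicity in $\gamma$ (Proposition~\ref{Prop:CSDOrderStructure}) shows $\gamma=1$ is the weakest requirement.

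\emph{Case $W=0$.} One must first check $\Delta\ge0$ so that this case does not overlap the third. Pick an optimal coupling $\kappa$ attaining $W_s$; attainment is given by Theorem~\ref{Thm:CompensatedScoreTransport}. Then $d_s=0$ $\kappa$-a.s. Since $|S(y)-S(x)|\le d_s(x,y)$, we get $S(X)=S(Y)$ a.s., hence $\Delta=0$. The condition then reads $0\ge0$ for every $\gamma$, in particular $\gamma=0$, so $\gamma^\star=0$. The only subtle point in the whole argument is that this case is consistent with the case split, i.e.\ that $W=0$ forces $\Delta=0$; once that is noted, the rest is elementary algebra.

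\emph{Case $W>0$, $\Delta\ge0$.} Because $1+\gamma>0$, the inequality $\Delta\ge\frac{1-\gamma}{1+\gamma}W$ is equivalent to $\Delta(1+\gamma)\ge W(1-\gamma)$, that is, $\gamma(W+\Delta)\ge W-\Delta$. Since $W+\Delta>0$, this is $\gamma\ge\frac{W-\Delta}{W+\Delta}$. Call this threshold $\bar\gamma$. It satisfies $\bar\gamma\le1$ because $\Delta\ge0$. If $\Delta\le W$, then $\bar\gamma\in[0,1]$ and the admissible set is $[\bar\gamma,1]$, so its infimum is $\bar\gamma$. If $\Delta>W$, every $\gamma\in[0,1]$ works and the infimum is $0$. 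The displayed formula then gives a negative number, so there I would either read the formula with a $\max\{\cdot,0\}$, or verify that $\Delta\le W$ always holds. The latter is true: for any coupling, $S(y)-S(x)\le d_s(x,y)$, and integrating against an optimal coupling gives $\Delta\le W$. I will include this one-line bound, so the formula holds exactly with $\bar\gamma\in[0,1]$.
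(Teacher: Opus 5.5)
Your proof is correct and matches the paper's, which likewise obtains \eqref{Eq:CSDCriticalGamma} by solving the criterion of Corollary~\ref{Cor:CSDWassersteinCriterion} for $\gamma$. Your explicit checks that $W_s=0$ forces $\Delta_s=0$, and that $\Delta_s\le W_s$ keeps the threshold in $[0,1]$, make the case split airtight where the paper leaves these points implicit. The paper additionally shows something the formula itself does not need: in the middle case, every $W_s$-optimal coupling minimizes the ratio of expected score decrease to expected score increase. That ratio is $(B-\Delta_s)/(B+\Delta_s)$, where $B$ is the coupling's total transport cost, and this is what justifies interpreting $\gamma^\star_{s,\lambda}$ as that minimal ratio.
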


When $\Delta_s(P,Q)<0$, even $\gamma=1$ fails because the expected weighted
sum of the scores is lower under $Q$. Hence no tolerance in $[0,1]$ supports the
comparison, which explains the last case in
\eqref{Eq:CSDCriticalGamma}.

The following example shows the compensation calculation when one
distribution is a fixed shift of the other.

\begin{example}[Compensation under a common shift]
\label{example:CSDTranslationShift}
Let $X$ be any integrable random vector with distribution $P$. Fix
$\delta\in\mathbb R^d$ such that $X+\delta\in D$ almost surely, and let
$Q_\delta$ denote the distribution of $X+\delta$. Use the coordinate scores
$s_i(x)=x_i$, $i=1,\ldots,d$, and let $\lambda$ be counting measure on
$\{1,\ldots,d\}$. Appendix~\ref{App:CSDTransportProofs} shows that, for every
$\gamma\in[0,1]$,
\begin{equation}
\label{Eq:CSDTranslationCriterion}
P\preceq_{s,\lambda,\gamma}^{\rm CSD}Q_\delta
\quad\Longleftrightarrow\quad
\sum_{i=1}^d(-\delta_i)_+
\le
\gamma\sum_{i=1}^d(\delta_i)_+.
\end{equation}
 Other score choices can be used to  measure increases and decreases differently. With
nonlinear scores, the change between $x$ and $y$ can depend on their levels,
not only on $y-x$.
\end{example}

\subsection{Computation}
\label{Sec:CSDComputation}

For empirical and other finitely supported distributions, the transport characterization gives a direct way to check whether dominance in CSD holds. We use this formulation in the empirical application in Section~\ref{Sec:OlistApplication}. 

Let $\delta_x$ denote the Dirac probability
measure at $x$, and write $P=\sum_{r=1}^{n_P}p_r\delta_{x_r}$ and
$Q=\sum_{\ell=1}^{n_Q}q_\ell\delta_{y_\ell}$, where all masses are positive
and sum to one. Then $W_s(P,Q)$ is the value of
\begin{equation}
\label{Eq:FiniteScoreWassersteinLP}
\min_{\pi_{r\ell}\ge0}
\left\{
\sum_{r=1}^{n_P}\sum_{\ell=1}^{n_Q}
\pi_{r\ell}d_s(x_r,y_\ell):
\sum_{\ell=1}^{n_Q}\pi_{r\ell}=p_r,\
\sum_{r=1}^{n_P}\pi_{r\ell}=q_\ell
\right\}.
\end{equation}
Solving \eqref{Eq:FiniteScoreWassersteinLP} once gives an optimal coupling for
every $\gamma<1$. After computing $\Delta_s(P,Q)$ from the support points and masses,
\eqref{Eq:CSDMeanWassersteinIdentity}, and
\eqref{Eq:CSDCriticalGamma} give the CSD
value for every $\gamma$, and the smallest tolerance for which the
comparison holds. One solve therefore covers every value of $\gamma$. The linear program has
$n_P n_Q$ nonnegative variables and $n_P+n_Q-1$ independent equality
constraints.

For a finite family of linear scores,
$\lambda=\sum_{j=1}^J\lambda_j\delta_{a^j}$ and
$s_{a^j}(x)=a^j\cdot x$ so 
$d_s(x_r,y_\ell)=\sum_{j=1}^J\lambda_j
|a^j\cdot(x_r-y_\ell)|$. For coordinate scores, it becomes
$d_s(x_r,y_\ell)=\sum_{i=1}^d\beta_i|x_{r,i}-y_{\ell,i}|$. These formulas
construct the transport cost matrix directly from the selected score values.  Additional computational details are given in Appendix
\ref{App:CSDTransportProofs}. 

\subsection{Utility Representation of CSD}
\label{Sec:Utility}

We next identify the utilities whose expected values generate CSD. Define
\begin{equation}
\label{Eq:CSDUtilityClassIncrement}
\mathcal U_{s,\lambda,\gamma}
:=
\left\{
u:\mathbb R^d\to\mathbb R:
 u(x)-u(y)\le \ell_s(x,y)-\gamma\ell_s(y,x)
 \text{ for all }x,y\in\mathbb R^d
\right\}.
\end{equation}
Under our assumptions, every utility in
this class is continuous and 
its expectations under $P$ and $Q$ are well defined. We have the following utility characterization of CSD.

\begin{theorem}[Utility representation]
\label{Thm:CSDUtilityRepresentation}
For every $\gamma\in[0,1]$,
\begin{equation}
\label{Eq:CSDUtilityDuality}
\mathfrak D_{s,\lambda,\gamma}(P,Q)
=
\sup_{u\in\mathcal U_{s,\lambda,\gamma}}
\left\{
\int_D u\,dP-
\int_D u\,dQ
\right\}.
\end{equation}
Consequently, $P\preceq_{s,\lambda,\gamma}^{\rm CSD}Q$ if and only if
$\int_D u\,dP\le\int_D u\,dQ$ for every
$u\in\mathcal U_{s,\lambda,\gamma}$.
\end{theorem}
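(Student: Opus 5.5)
This is Kantorovich duality for the cost $c_\gamma(x,y):=\ell_s(x,y)-\gamma\ell_s(y,x)$. The plan is to prove the two inequalities separately, after first checking that $c_\gamma$ satisfies the triangle inequality.

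\emph{Weak duality.} For $u\in\mathcal U_{s,\lambda,\gamma}$ and any $\kappa\in\mathcal C(P,Q)$, integrate $u(x)-u(y)\le c_\gamma(x,y)$ against $\kappa$. This gives $\int u\,dP-\int u\,dQ\le\int c_\gamma\,d\kappa$. Integrability holds because $|u(x)-u(0)|\le \overline L_s\|x\|_1$ follows from the defining inequality applied in both directions, and $P,Q$ have finite first moments. Taking the supremum over $u$ and the infimum over $\kappa$ gives ``$\ge$''.

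\emph{Strong duality.} Rather than invoking abstract duality, I would construct an optimal $u$ from Theorem~\ref{Thm:CompensatedScoreTransport}.

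\textbf{Step 1: $c_\gamma$ is a quasi-metric.} Write $c_\gamma=\frac{1-\gamma}{2}d_s(x,y)+\frac{1+\gamma}{2}(S(x)-S(y))$, using $\ell_s(x,y)-\ell_s(y,x)=S(x)-S(y)$ and $\ell_s(x,y)+\ell_s(y,x)=d_s(x,y)$. Then $c_\gamma(x,x)=0$. Also $c_\gamma(x,z)\le c_\gamma(x,y)+c_\gamma(y,z)$, since $d_s$ is a pseudometric, the $S$-part telescopes, and $\frac{1-\gamma}{2}\ge0$.

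\textbf{Step 2: reduce to a $1$-Lipschitz potential.} By Theorem~\ref{Thm:CompensatedScoreTransport}, $\mathfrak D_{s,\lambda,\gamma}=\frac{1-\gamma}{2}W_s(P,Q)-\frac{1+\gamma}{2}\Delta_s(P,Q)$. Take $u=\frac{1+\gamma}{2}S+\frac{1-\gamma}{2}\phi$, where $\phi$ satisfies $|\phi(x)-\phi(y)|\le d_s(x,y)$. Then
\[
u(x)-u(y)\le \tfrac{1+\gamma}{2}(S(x)-S(y))+\tfrac{1-\gamma}{2}d_s(x,y)=c_\gamma(x,y),
\]
so $u\in\mathcal U_{s,\lambda,\gamma}$. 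Moreover
\[
\int u\,dP-\int u\,dQ=-\tfrac{1+\gamma}{2}\Delta_s+\tfrac{1-\gamma}{2}\Big(\int\phi\,dP-\int\phi\,dQ\Big).
\]
It therefore suffices to have
\[
\sup_\phi\Big\{\int\phi\,dP-\int\phi\,dQ\Big\}=W_s(P,Q),
\]
the supremum being over $d_s$-$1$-Lipschitz functions on $\mathbb R^d$. When $\gamma=1$ the second term vanishes, and $u=S$ gives equality directly.

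\textbf{Step 3: the main obstacle.} The remaining task is Kantorovich--Rubinstein duality for the pseudometric $d_s$, which is continuous and bounded by $\overline L_s\|\cdot\|_1$ (Lemma~\ref{Lem:ScoreRegularity}). I would invoke Lemma~\ref{Lem:ScoreProfileReduction}, which maps each outcome to its score profile $x\mapsto s_\cdot(x)\in L^1(\lambda)$, so that $d_s$ becomes the $L^1(\lambda)$ norm. Standard Kantorovich--Rubinstein duality on the Polish space $L^1(\lambda)$, applied to the pushforwards (which have finite first moments), then gives $W_s$ as a supremum over $1$-Lipschitz functions $\psi$ on $L^1(\lambda)$. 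Setting $\phi(x)=\psi(s_\cdot(x))$ yields a function on all of $\mathbb R^d$ that is $1$-Lipschitz for $d_s$.

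A possible subtlety is that only the images of $D$ matter. If the profile reduction is not available in the needed form, I would instead apply Kantorovich--Rubinstein duality directly on $\mathbb R^d$ with the lower semicontinuous cost $d_s$. This gives $\phi$ with $\phi(x)-\phi(y)\le d_s(x,y)$ on $D$. I would then extend it to $\mathbb R^d$ via the McShane formula $\inf_{z\in D}\{\phi(z)+d_s(x,z)\}$, which is finite and preserves the Lipschitz bound because $d_s$ satisfies the triangle inequality.

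The final equivalence with $P\preceq^{\rm CSD}_{s,\lambda,\gamma}Q$ then follows from attainment in Theorem~\ref{Thm:CompensatedScoreTransport}.
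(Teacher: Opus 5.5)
Your proposal is correct and follows essentially the paper's proof: it uses the same decomposition $\ell_s(x,y)-\gamma\ell_s(y,x)=\frac{1+\gamma}{2}\{S(x)-S(y)\}+\frac{1-\gamma}{2}d_s(x,y)$, Theorem~\ref{Thm:CompensatedScoreTransport}, and Kantorovich--Rubinstein duality through the score-profile map of Lemma~\ref{Lem:ScoreProfileReduction}. The one difference is how you get the upper bound: you integrate the increment constraint against a coupling (weak duality), whereas the paper shows that every $u\in\mathcal U_{s,\lambda,\gamma}$ has the form $c+\frac{1+\gamma}{2}S+\frac{1-\gamma}{2}f$ with $f$ $d_s$-$1$-Lipschitz. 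One small correction: apply the duality on the closed separable subspace $E\subseteq L^1(\lambda)$ from Lemma~\ref{Lem:ScoreProfileReduction}, not on $L^1(\lambda)$ itself, which need not be Polish for a general finite measure space.
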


For $\gamma<1$, the utility class also has the following equivalent form:
$u\in\mathcal U_{s,\lambda,\gamma}$ exactly when
\begin{equation}
\label{Eq:CSDUtilityClassWasserstein}
u(x)
=
c+
\frac{1+\gamma}{2}S(x)
+
\frac{1-\gamma}{2}f(x),
\qquad
|f(x)-f(y)|\le d_s(x,y)
\quad\text{for all }x,y\in\mathbb R^d,
\end{equation}
for some $c\in\mathbb R$ and $f:\mathbb R^d\to\mathbb R$. At $\gamma=1$,
the class reduces to $u(x)=c+S(x)$. Equation~\eqref{Eq:CSDUtilityClassWasserstein} resembles the optimal transport characterization of CSD in Theorem \ref{Thm:CompensatedScoreTransport}. In particular, every admissible
utility is a weighted sum of $S$ plus a second function
whose change between $x$ and $y$ cannot exceed the total score difference
$d_s(x,y)$ while $\gamma$ controls the weight. We next show that this utility characterization recovers known multivariate almost-stochastic-dominance orders defined through utility restrictions in the literature and also yields new, natural utility classes.

\subsubsection{Linear scores}

Suppose $\Theta=\mathbb R_+^d$, $s_a(x)=a\cdot x$, and $\lambda$
is a nonzero finite Borel measure satisfying
$\int_{\mathbb R_+^d}\|a\|_1\,\lambda(da)<\infty$. Define
\[
\mathcal B_{\lambda,\gamma}
:=
\left\{
\int_{\mathbb R_+^d}r(a)a\,\lambda(da):
 r\text{ measurable and }\gamma\le r(a)\le1
\right\}.
\]
The gradient characterization below shows how the selected linear scores restrict marginal utilities and the tradeoffs among them. For example, with two scores $a^1\cdot x$ and $a^2\cdot x$ having weights $\lambda_1$ and $\lambda_2$, every admissible utility satisfies $\nabla u(x)=\lambda_1 r_1(x)a^1+\lambda_2 r_2(x)a^2$ for some $r_1(x),r_2(x)\in[\gamma,1]$. Thus the choice of the score vectors $a^1$ and $a^2$ can restrict not only the marginal utility of each attribute, but also the marginal rates of substitution among attributes.

\begin{proposition}[Linear-score utility order]
\label{Prop:CSDLinearScoresMASD}
Under linear scores, for every $\gamma\in[0,1]$,
$P\preceq_{s,\lambda,\gamma}^{\rm CSD}Q$ if and only if
$\int_D u\,dP\le\int_D u\,dQ$ for every differentiable
$u:\mathbb R^d\to\mathbb R$ satisfying
$\nabla u(x)\in\mathcal B_{\lambda,\gamma}$ for every $x$.
\end{proposition}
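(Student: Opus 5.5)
The plan is to reduce the statement to Theorem~\ref{Thm:CSDUtilityRepresentation}. I would show that, under linear scores, the class $\mathcal U_{s,\lambda,\gamma}$ is exactly the class of functions whose increments are bounded by the support function of $\mathcal B_{\lambda,\gamma}$. I would then show that the differentiable functions with gradients in $\mathcal B_{\lambda,\gamma}$ are dense in this class, in a sense strong enough to pass to the limit in expectations.

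\emph{Step 1: identify the increment bound.} Let $\sigma(v):=\sup_{b\in\mathcal B_{\lambda,\gamma}} b\cdot v$. For linear scores, and any $x,y$ with $v=x-y$,
\[
\ell_s(x,y)-\gamma\ell_s(y,x)=\int_{\mathbb R_+^d}\bigl[(a\cdot v)_+-\gamma(a\cdot v)_-\bigr]\lambda(da)=\int_{\mathbb R_+^d}\max_{\gamma\le r\le1} r\,(a\cdot v)\,\lambda(da).
\]
The pointwise maximizer $r(a)=\mathbf 1\{a\cdot v\ge0\}+\gamma\mathbf 1\{a\cdot v<0\}$ is measurable. Hence the right-hand side equals $\sigma(v)$, and
\[
\mathcal U_{s,\lambda,\gamma}=\{u:\ u(x)-u(y)\le\sigma(x-y)\ \text{for all } x,y\}.
\]
I would also record that $\mathcal B_{\lambda,\gamma}$ is convex and compact. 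It is the image of the weak-$*$ compact convex set $\{r\in L^\infty(\lambda):\gamma\le r\le1\}$ under the linear map $r\mapsto\int r(a)a\,\lambda(da)$. This map is weak-$*$ continuous because $a\mapsto a$ lies in $L^1(\lambda)$ by the moment assumption. In particular $\sigma(v)\le C\|v\|_1$ for some finite $C$.

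\emph{Step 2: the ``only if'' direction.} Let $u$ be differentiable with $\nabla u(x)\in\mathcal B_{\lambda,\gamma}$ for every $x$. By the mean value theorem there is a point $z$ on the segment from $y$ to $x$ with $u(x)-u(y)=\nabla u(z)\cdot(x-y)\le\sigma(x-y)$. So $u\in\mathcal U_{s,\lambda,\gamma}$, and Theorem~\ref{Thm:CSDUtilityRepresentation} gives $\int u\,dP\le\int u\,dQ$ whenever $P\preceq^{\rm CSD}_{s,\lambda,\gamma}Q$.

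\emph{Step 3: the ``if'' direction, which I expect to be the main obstacle.} Elements of $\mathcal U_{s,\lambda,\gamma}$ need not be differentiable, so an approximation argument is needed. Take $u\in\mathcal U_{s,\lambda,\gamma}$ and mollify it: $u_\varepsilon=u*\varphi_\varepsilon$ with $\varphi_\varepsilon\ge0$ smooth and $\int\varphi_\varepsilon=1$.
\begin{itemize}
\item Averaging the inequality $u(x-z)-u(y-z)\le\sigma(x-y)$ against $\varphi_\varepsilon(z)\,dz$ shows that $u_\varepsilon$ satisfies the same increment bound, and $u_\varepsilon$ is smooth.
\item For a differentiable function with this bound, $\nabla u_\varepsilon(x)\cdot v\le\sigma(v)$ for every direction $v$. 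Since $\mathcal B_{\lambda,\gamma}$ is closed and convex, the separation theorem gives $\nabla u_\varepsilon(x)\in\mathcal B_{\lambda,\gamma}$.
\item Since $u$ is $C$-Lipschitz in $\|\cdot\|_1$, $|u_\varepsilon-u|\le C\varepsilon$ uniformly when $\varphi_\varepsilon$ is supported in the $\|\cdot\|_1$-ball of radius $\varepsilon$.
\end{itemize}
Suppose $\int v\,dP\le\int v\,dQ$ for every differentiable $v$ with gradients in $\mathcal B_{\lambda,\gamma}$. Then $\int u\,dP-\int u\,dQ\le 2C\varepsilon$ for every $\varepsilon>0$; the integrals are finite by the first-moment assumption. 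Letting $\varepsilon\to0$ gives $\int u\,dP\le\int u\,dQ$ for all $u\in\mathcal U_{s,\lambda,\gamma}$. By Theorem~\ref{Thm:CSDUtilityRepresentation}, $P\preceq^{\rm CSD}_{s,\lambda,\gamma}Q$. The delicate points are the compactness (closedness) of $\mathcal B_{\lambda,\gamma}$, which is needed to conclude gradient membership rather than membership in its closure, and the measurable-selection identity in Step~1.
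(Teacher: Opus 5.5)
Your proposal is correct and follows the paper's proof essentially step for step. The shared steps are: the identity $\ell_s(x,y)-\gamma\ell_s(y,x)=\sigma(x-y)$ via the pointwise maximizer; compactness and convexity of $\mathcal B_{\lambda,\gamma}$ via weak-* compactness of the order interval; the mean-value theorem for one direction; and, for the other, mollification combined with translation invariance, positive homogeneity, the support-function characterization of closed convex sets, and the uniform Lipschitz approximation, concluding with Theorem~\ref{Thm:CSDUtilityRepresentation}. The only cosmetic difference is that you bound $\sigma(v)$ by a multiple of $\|v\|_1$ where the paper uses $\|v\|_\infty$, which is immaterial.
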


\subsubsection{Coordinate scores and substitution-MASD}
Substitution MASD is a special case of the linear-score in which the selected score vectors are the coordinate unit vectors, so each score evaluates a single attribute.
Formally, let $\beta_1,\ldots,\beta_d>0$ and
$\lambda_\beta:=\sum_{i=1}^d\delta_{\beta_i e_i}$. Then
$\mathcal B_{\lambda_\beta,\gamma}
=\prod_{i=1}^d[\gamma\beta_i,\beta_i]$. For $0<\gamma\le1$, define
\[
\mathcal U_{\gamma,\beta}^{\rm MASD}
:=
\left\{
 u:\mathbb R^d\to\mathbb R:
 u\text{ is differentiable and }
 \gamma\beta_i\le\partial_i u(x)\le\beta_i
 \text{ for every }x\text{ and }i
\right\}.
\]
We write $P\preceq_{\gamma,\beta}^{\rm MASD}Q$ when
$\int_D u\,dP\le\int_D u\,dQ$ for every
$u\in\mathcal U_{\gamma,\beta}^{\rm MASD}$. This is the utility class used for substitution-MASD in
\citet{MullerScarsiniTsetlinWinkler2025}, with lower bounds
$\gamma\beta_i$ and upper bounds $\beta_i$.
For coordinate scores, $\Delta_s(P,Q)$ is the weighted difference in coordinate means and $W_s(P,Q)$ is the weighted $\ell_1$-Wasserstein distance. Our results imply the following Wasserstein characterization of substitution-MASD. 

\begin{corollary}
\label{Cor:CSDMASDWasserstein}
For every $0<\gamma\le1$, 
$P\preceq_{s,\lambda_\beta,\gamma}^{\rm CSD}Q
\quad\Longleftrightarrow\quad
P\preceq_{\gamma,\beta}^{\rm MASD}Q$, 
and
\begin{equation}
\label{Eq:MASDWassersteinCharacterization}
P\preceq_{\gamma,\beta}^{\rm MASD}Q
\quad\Longleftrightarrow\quad
\Delta_\beta(P,Q)
\ge
\frac{1-\gamma}{1+\gamma}W_{1,\beta}(P,Q).
\end{equation}
Moreover,
\begin{equation}
\label{Eq:FOSDWeightedWassersteinCharacterization}
Q\text{ dominates }P\text{ by FOSD}
\quad\Longleftrightarrow\quad
\Delta_\beta(P,Q)=W_{1,\beta}(P,Q).
\end{equation}
\end{corollary}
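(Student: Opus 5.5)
The plan is to derive all three claims by specializing results already stated, with coordinate scores $\Theta=\{1,\dots,d\}$ and $\lambda_\beta=\sum_i\delta_{\beta_ie_i}$. For this choice the scores are $s_{\beta_ie_i}(x)=\beta_ix_i$, so $S(x)=\sum_i\beta_ix_i$ and $d_s(x,y)=\sum_i\beta_i|x_i-y_i|$. Hence $\Delta_s(P,Q)=\Delta_\beta(P,Q)$, the weighted difference of coordinate means, and $W_s(P,Q)=W_{1,\beta}(P,Q)$, the weighted $\ell_1$-Wasserstein distance. These identifications are immediate from \eqref{Eq:AggregateScore}--\eqref{Eq:AggregateScoreDifference}.

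For the first equivalence, apply Proposition~\ref{Prop:CSDLinearScoresMASD}. It says that CSD is equivalent to $\int u\,dP\le\int u\,dQ$ for all differentiable $u$ with $\nabla u(x)\in\mathcal B_{\lambda_\beta,\gamma}$. It remains to verify the stated identity $\mathcal B_{\lambda_\beta,\gamma}=\prod_i[\gamma\beta_i,\beta_i]$. Since $\lambda_\beta$ is a sum of atoms, a measurable $r$ is just a vector $(r_1,\dots,r_d)\in[\gamma,1]^d$, and $\int r(a)a\,\lambda_\beta(da)=\sum_ir_i\beta_ie_i$. These vectors fill the box exactly. The admissible class is then precisely $\mathcal U^{\rm MASD}_{\gamma,\beta}$. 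The restriction $\gamma>0$ only matches the MASD definition and plays no role in the argument.

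The Wasserstein characterization \eqref{Eq:MASDWassersteinCharacterization} follows by combining the first equivalence with Corollary~\ref{Cor:CSDWassersteinCriterion}, after substituting $\Delta_s=\Delta_\beta$ and $W_s=W_{1,\beta}$.

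For \eqref{Eq:FOSDWeightedWassersteinCharacterization}, use Theorem~\ref{Thm:CompensatedScoreTransport} at $\gamma=0$. It gives $L^\star_s=\tfrac12(W_{1,\beta}-\Delta_\beta)$ with the infimum attained. Thus $\Delta_\beta=W_{1,\beta}$ holds iff some coupling $\kappa$ has $\int\sum_i\beta_i(x_i-y_i)_+\,d\kappa=0$. Because every $\beta_i>0$, this is equivalent to $x\le y$ coordinatewise $\kappa$-a.s.

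By Strassen's theorem, such a monotone coupling exists iff $Q$ dominates $P$ in FOSD. For the converse direction, given a monotone coupling we get $\ell_s=0$ a.s., so $L_s^\star=0$ and the identity gives equality. This recovers the remark after Definition~\ref{Def:ScoreCompensatedDominance}.

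The main point needing care is attainment of the infimum, so that $L^\star_s=0$ yields an actual monotone coupling. This attainment is supplied by Theorem~\ref{Thm:CompensatedScoreTransport}. Apart from that, and the bookkeeping identifying $\mathcal B_{\lambda_\beta,\gamma}$ with the box, the argument is direct substitution.
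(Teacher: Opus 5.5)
Your proposal is correct and follows essentially the same route as the paper. Both arguments identify $S$, $d_s$, and $\mathcal B_{\lambda_\beta,\gamma}=\prod_i[\gamma\beta_i,\beta_i]$, invoke Proposition~\ref{Prop:CSDLinearScoresMASD} and Corollary~\ref{Cor:CSDWassersteinCriterion}, and obtain the FOSD claim from an optimal coupling with zero weighted decrease plus Strassen's theorem. The only cosmetic difference is in the forward direction of \eqref{Eq:FOSDWeightedWassersteinCharacterization}: you get it from $L_s^\star=\tfrac12(W_{1,\beta}-\Delta_\beta)$ in Theorem~\ref{Thm:CompensatedScoreTransport}, whereas the paper uses the monotone coupling and $W_{1,\beta}\ge|\Delta_\beta|$ directly.
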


the smallest value of
$\gamma$ for which MASD holds is
given in Corollary~\ref{Cor:CSDCriticalGamma}. In particular, one weighted transport problem determines
whether substitution-MASD holds for every $\gamma$ and gives the smallest value for which it holds.  We now use this characterization to illustrate dominance between two correlated normal random variables. 

\begin{example}
\label{example:MASDGaussianMeanDispersion}

Fix  coordinate weights $\beta=(1,1)$, and let
\[
P
=
\mathcal N\left(
\begin{pmatrix}
0\\
0
\end{pmatrix},
\begin{pmatrix}
1 & \rho\\
\rho & 1
\end{pmatrix}
\right),
\qquad
Q
=
\mathcal N\left(
\begin{pmatrix}
m\\
0
\end{pmatrix},
\begin{pmatrix}
1 & \rho\sigma\\
\rho\sigma & \sigma^2
\end{pmatrix}
\right),
\]
where $m>0$, $\sigma\ge1$, and $\rho\in(-1,1)$. Relative to $P$, the
distribution $Q$ increases the mean of the first coordinate by $m$ and
increases the standard deviation of the second coordinate from $1$ to
$\sigma$, while preserving its mean and the correlation coefficient. Thus the expected sum of the two coordinates is higher under $Q$, while the
second coordinate is more dispersed.

We show in the appendix that for every $\gamma\in(0,1]$,
\begin{equation}
\label{Eq:MASDGaussianMeanDispersionThreshold}
P\preceq_{\gamma,\beta}^{\rm MASD}Q
\quad\Longleftrightarrow\quad
\gamma
\ge
\frac{
(\sigma-1)\sqrt{2/\pi}
}{
2m+(\sigma-1)\sqrt{2/\pi}
}.
\end{equation}
By Corollary \ref{Cor:CSDMASDWasserstein} the same condition characterizes
coordinate-score CSD.

Condition~\eqref{Eq:MASDGaussianMeanDispersionThreshold} characterizes when the increase in the mean of the first coordinate compensates for realizations in which the second coordinate under $Q$ falls below its paired value under $P$. As expected, the smallest required tolerance decreases with $m$, increases with $\sigma$, and does not depend on the common correlation coefficient $\rho$.
\end{example}

\subsubsection{Finitely many scores}

We now generalize  the linear scores to the practical case where we have any finitely 
 many scores (including possibly nonlinear scores). This provides a broad utility characterization for CSD. 
Suppose $\Theta=\{1,\ldots,J\}$ and
$\lambda=\sum_{j=1}^J\lambda_j\delta_j$, where $\lambda_j>0$, and write
$s_1,\ldots,s_J$ for the selected scores.

\begin{proposition}
\label{Prop:CSDFinitelyManyScores}
For every $\gamma\in[0,1]$,
$P\preceq_{s,\lambda,\gamma}^{\rm CSD}Q$ if and only if
\[
\int_D v\bigl(\lambda_1s_1(x),\ldots,\lambda_Js_J(x)\bigr)\,P(dx)
\le
\int_D v\bigl(\lambda_1s_1(y),\ldots,\lambda_Js_J(y)\bigr)\,Q(dy)
\]
for every differentiable $v:\mathbb R^J\to\mathbb R$ satisfying
$\gamma\le\partial_jv(z)\le1$ for every $z\in\mathbb R^J$ and
$j=1,\ldots,J$.
\end{proposition}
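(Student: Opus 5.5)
The plan is to pass to the score profile and apply the coordinate-score results there. Define $T:\mathbb R^d\to\mathbb R^J$ by $T(x):=(\lambda_1s_1(x),\ldots,\lambda_Js_J(x))$, and let $\tilde P:=P\circ T^{-1}$ and $\tilde Q:=Q\circ T^{-1}$. By the Lipschitz-type assumption on the scores, both push-forwards have finite first moments. For finitely many scores,
\[
\ell_s(x,y)=\sum_{j=1}^J\bigl(\lambda_js_j(x)-\lambda_js_j(y)\bigr)_+=\ell_e(T(x),T(y)),
\]
where $\ell_e$ is the coordinate-score decrease cost on $\mathbb R^J$ with unit weights. Similarly $d_s(x,y)=\|T(x)-T(y)\|_1$ and $S(x)=\sum_jT_j(x)$. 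Consequently $\Delta_s(P,Q)$ equals the unit-weight coordinate mean difference $\Delta_{\mathbf 1}(\tilde P,\tilde Q)$.

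The key step is to show $W_s(P,Q)=W_{1,\mathbf 1}(\tilde P,\tilde Q)$, the $\ell_1$-Wasserstein distance on $\mathbb R^J$. This is the content of Lemma~\ref{Lem:ScoreProfileReduction}, which I would invoke directly. Its argument runs as follows. One inequality is immediate, since any coupling of $P,Q$ pushes forward under $T\times T$ to a coupling of $\tilde P,\tilde Q$ with the same cost. For the reverse inequality, take an optimal coupling $\tilde\kappa$ of $\tilde P,\tilde Q$ and glue it with the conditional laws of $X$ given $T(X)$ and of $Y$ given $T(Y)$ (disintegration on Polish spaces). This yields a coupling of $P,Q$ whose cost is the same, because the cost depends on $(x,y)$ only through $(T(x),T(y))$.

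With this identity, Corollary~\ref{Cor:CSDWassersteinCriterion} gives
\[
P\preceq_{s,\lambda,\gamma}^{\rm CSD}Q\iff\Delta_{\mathbf 1}(\tilde P,\tilde Q)\ge\tfrac{1-\gamma}{1+\gamma}W_{1,\mathbf 1}(\tilde P,\tilde Q)\iff\tilde P\preceq^{\rm CSD}_{e,\mathbf 1,\gamma}\tilde Q,
\]
where the last relation is coordinate-score CSD on $\mathbb R^J$ with unit weights. For $\gamma>0$, Corollary~\ref{Cor:CSDMASDWasserstein} (with $d=J$, $\beta=\mathbf 1$, and $D=\mathbb R^J$) identifies this with $\int v\,d\tilde P\le\int v\,d\tilde Q$ for all differentiable $v$ with $\gamma\le\partial_jv\le1$. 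By the change of variables $\int v\,d\tilde P=\int v(T(x))\,P(dx)$, this is exactly the displayed condition.

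The main obstacle is $\gamma=0$, which Corollary~\ref{Cor:CSDMASDWasserstein} excludes. Here I would instead use Proposition~\ref{Prop:CSDLinearScoresMASD} on $\mathbb R^J$ with $\lambda=\sum_j\delta_{e_j}$. In that case $\mathcal B_{\lambda,\gamma}=[\gamma,1]^J$ for every $\gamma\in[0,1]$, which covers $\gamma=0$ as well and in fact gives the whole range in one step. The hypothesis $\int\|a\|_1\,d\lambda<\infty$ holds trivially, and the identification of coordinate CSD on $\tilde P,\tilde Q$ is the same transfer via $T$ as above. If one is uneasy with the support set $D$, note that the profile-space CSD is stated with $D=\mathbb R^J$, and the characterization there only uses finite first moments, which $\tilde P$ and $\tilde Q$ have.
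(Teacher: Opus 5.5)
Your proposal is correct and matches the paper's proof: push $P$ and $Q$ forward under $T$, use Lemma~\ref{Lem:ScoreProfileReduction} to reduce CSD to unit-weight coordinate CSD on $\mathbb R^J$, and finish with Proposition~\ref{Prop:CSDLinearScoresMASD} and the gradient set $[\gamma,1]^J$. The paper transfers CSD through the coupling identity \eqref{Eq:ScoreProfileCouplingReduction}, applied to the scaled scores $\lambda_js_j$ with counting measure, rather than through the Wasserstein criterion of Corollary~\ref{Cor:CSDWassersteinCriterion}, and it applies Proposition~\ref{Prop:CSDLinearScoresMASD} for every $\gamma\in[0,1]$ at once, so your detour through Corollary~\ref{Cor:CSDMASDWasserstein} for $\gamma>0$ is unnecessary.
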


Thus, one may view CSD with finite scores as substitute MASD in the score space after the outcome $x$ is mapped to $\bigl(\lambda_1s_1(x),\ldots,\lambda_Js_J(x)\bigr)$. In particular, when the
scores are differentiable and
$u(x)=v\bigl(\lambda_1s_1(x),\ldots,\lambda_Js_J(x)\bigr)$, then
\[
\nabla u(x)
=
\sum_{j=1}^J
\lambda_j
\partial_j v\bigl(\lambda_1s_1(x),\ldots,\lambda_Js_J(x)\bigr)
\nabla s_j(x).
\]
Hence, with finitely many scores, the marginal utility of each outcome coordinate is determined by the scores that depend on that coordinate. These marginal utilities may vary across outcomes when the scores are nonlinear. Moreover, when one score depends on several coordinates, the same score-specific weight enters the marginal utilities of all of those coordinates, capturing substitution among them. This allows modelers to restrict marginal rates of substitution when comparing multivariate distributions, which is useful when partial preferences over tradeoffs between attributes are known or can be elicited. For $s_j(x)=x_j$ and
$\lambda_j=\beta_j$, the characterization reduces to
$\gamma\beta_j\le\partial_j u(x)\le\beta_j$, which gives
$\mathcal U_{\gamma,\beta}^{\rm MASD}$.

\begin{remark}[Preference elicitation through scores]
\label{Rem:CSDScoreElicitation}
\citet{MullerScarsiniTsetlinWinkler2025} show that the transfer
interpretation of substitution-MASD can be used to elicit $\gamma$
without eliciting a full multiattribute utility function. While this is not the focus of the current paper, Proposition
\ref{Prop:CSDFinitelyManyScores} extends this idea to the selected scores.
Instead of asking about tradeoffs between changes in individual attributes,
the analyst can ask about tradeoffs between changes in performance measures
that may combine several attributes.  
\end{remark}

\subsection{Lower-Orthant Scores}
\label{Sec:CSDLowerOrthantScores}

In this section, we consider scores based on lower-orthant indicators, a natural family of nonlinear scores. We show that these scores extend the univariate orders between
FOSD and SOSD \citep{MullerScarsiniTsetlinWinkler2017} to vector outcomes. In the multivariate setting lower-orthant scores generate a family of stochastic orders that interpolates
between multivariate FOSD at $\gamma=0$ and second-degree lower-orthant
dominance at $\gamma=1$ which we now define.  

Let $R$ be a probability law on
$\mathbb R^d$ and write
$F_R(t):=R\{x:x\le t\}$, where inequalities are coordinatewise.
The second-degree lower-orthant dominance 
\citep{OBrienScarsini1991} is defined by 
\begin{equation}
\label{Eq:SecondDegreeLowerOrthant}
\int_{(-\infty,t]}F_Q(z)\,dz
\le
\int_{(-\infty,t]}F_P(z)\,dz
\qquad
\text{for every }t\in\mathbb R^d.
\end{equation}
This order 
coincides with SOSD when $d=1$. 
Let $D=[\underline x,\overline x]\subset\mathbb R^d$ be a nondegenerate
rectangle and assume that the support of $P$ and $Q$ is contained in $D$. For each
$t\ge\underline x$, let $\Theta_t:=[\underline x,t]$ and equip
$\Theta_t$ with Lebesgue measure $\lambda_t$. For $z\in\Theta_t$, define
$s_z(x):=1-\mathbf 1_{\{x\le z\}}$ for $x\in D$ and for $x\notin D$  let $s_z(x):=1-\mathbf 1_{\{\Pi_D(x)\le z\}}$ where  $\Pi_D(x)$ denote its coordinatewise projection onto $D$.

Write $s^{(t)}:=(s_z)_{z\in\Theta_t}$. We say that lower-orthant CSD holds
at tolerance $\gamma$ when
\begin{equation}
\label{Eq:LowerOrthantCSD}
P\preceq_{s^{(t)},\lambda_t,\gamma}^{\rm CSD}Q
\qquad
\text{for every finite }t\ge\underline x.
\end{equation}

Interestingly, \citet{MullerScarsiniTsetlinWinkler2017} define
$(1+\gamma)$-stochastic dominance through increasing utilities satisfying
$0\le\gamma u'(y)\le u'(x)$ whenever $x\le y$. Their integral
characterization shows that the order is equivalent to
\begin{equation}
\label{Eq:CSDOneDimensionalBetweenOrders}
\int_{\underline x}^{t}
\{F_Q(z)-F_P(z)\}_+\,dz
\le
\gamma
\int_{\underline x}^{t}
\{F_P(z)-F_Q(z)\}_+\,dz
\qquad
\text{for every }t\in\mathbb R.
\end{equation}
The next corollary shows that the CSD criterion generated by the lower-orthant scores is precisely
\eqref{Eq:CSDOneDimensionalBetweenOrders}. Thus,
$(1+\gamma)$-stochastic dominance is a one-dimensional special case of
lower-orthant CSD. Appendix~\ref{App:CSDLowerOrthantScores} also derives the utility class generated by these orders, provides the explicit transport cost matrix,  and verifies that our CSD results
apply to these scores.

\begin{corollary}[Lower-orthant CSD]
\label{Cor:CSDLowerOrthant}
At $\gamma=0$, lower-orthant CSD coincides with multivariate FOSD, and at
$\gamma=1$, it coincides with second-degree lower-orthant dominance.
When $d=1$, lower-orthant CSD coincides, for every $\gamma\in[0,1]$, with
the $(1+\gamma)$-stochastic dominance order of
\citet{MullerScarsiniTsetlinWinkler2017}.
\end{corollary}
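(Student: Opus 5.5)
The plan is to reduce everything to the two quantities $L^\star$ and $\Delta_s$ from Theorem~\ref{Thm:CompensatedScoreTransport}, and then to compute these explicitly for the lower-orthant scores $s^{(t)}$. Fix a finite $t\ge\underline x$. For $x,y\in D$ and $z\in\Theta_t$ we have $s_z(x)-s_z(y)=\mathbf 1_{\{y\le z\}}-\mathbf 1_{\{x\le z\}}$. Integrating against $Q$ and $P$ over $z\in[\underline x,t]$ gives
\[
\Delta_{s^{(t)}}(P,Q)=\int_{[\underline x,t]}\{F_P(z)-F_Q(z)\}\,dz .
\]
Since $F_P=F_Q=0$ when some coordinate of $z$ lies below $\underline x$, this equals $\int_{(-\infty,t]}(F_P-F_Q)\,dz$.

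\textbf{Case $\gamma=1$.} By Theorem~\ref{Thm:CompensatedScoreTransport}, $\mathfrak D_{s^{(t)},\lambda_t,1}=-\Delta_{s^{(t)}}$. Lower-orthant CSD at $\gamma=1$ is therefore $\Delta_{s^{(t)}}\ge0$ for all $t\ge\underline x$. For $t\not\ge\underline x$ both sides of \eqref{Eq:SecondDegreeLowerOrthant} vanish, so the two conditions agree.

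\textbf{Case $\gamma=0$.} Here CSD asks for a coupling with $\ell_{s^{(t)}}(X,Y)=0$ a.s. In other words, for $\lambda_t$-a.e.\ $z\in[\underline x,t]$ we need $Y\le z\Rightarrow X\le z$. If $X\le Y$ coordinatewise this holds for every $t$, so Strassen's FOSD implies lower-orthant CSD. For the converse I would take $t=\overline x$, i.e.\ the largest box. The implication for a.e.\ $z$, together with right-continuity of orthant indicators (taking $z\downarrow Y$ along a dense set of points $z>Y$ inside $D$, with boundary cases handled by $D$ being a rectangle and $Y\le \overline x$), gives $X\le Y$ a.s. Hence the coupling is monotone and Strassen gives FOSD. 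The delicate points are measure-zero boundary effects on the faces of $D$. I expect this to be the main technical obstacle, to be handled by approximating $z$ from above within $D$ and noting that $Y=\overline x_i$ in coordinate $i$ trivially gives $X_i\le Y_i$.

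\textbf{Case $d=1$, general $\gamma$.} Use $\mathfrak D=(1-\gamma)L^\star-\gamma\Delta$. In one dimension $\ell_{s^{(t)}}(x,y)=\lambda\{z\in[\underline x,t]: y\le z<x\}$. Its minimum over couplings is attained by the comonotone (quantile) coupling, which is optimal for any cost of the form $\int c_z$ with submodular indicator costs. This yields
\[
L^\star=\int_{\underline x}^t\{F_Q(z)-F_P(z)\}_+\,dz ,
\]
since under the quantile coupling the event $\{Y\le z<X\}$ has probability $(F_Q(z)-F_P(z))_+$, and no coupling can do better because $P(Y\le z<X)\ge F_Q(z)-F_P(z)$ pointwise. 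Substituting this together with $\Delta=\int(F_P-F_Q)$ and writing $F_P-F_Q=(F_P-F_Q)_+-(F_Q-F_P)_+$, the condition $\mathfrak D\le0$ becomes
\[
(1-\gamma)A-\gamma(B-A)\le 0,\qquad A=\int_{\underline x}^t(F_Q-F_P)_+\,dz,\quad B=\int_{\underline x}^t(F_P-F_Q)_+\,dz,
\]
i.e.\ $A\le\gamma B$. This is exactly \eqref{Eq:CSDOneDimensionalBetweenOrders} for every $t$; for $t<\underline x$ both sides are zero. Citing the integral characterization of \citet{MullerScarsiniTsetlinWinkler2017} finishes the proof. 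The lower bound $P(Y\le z<X)\ge F_Q-F_P$ also gives the $L^\star$ formula directly without invoking submodularity, which keeps this step short.
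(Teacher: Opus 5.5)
Your $\gamma=1$ step is the same as the paper's. Your $d=1$ step is also correct, and slightly more direct than the paper's. The paper identifies $W_{s^{(t)}}$ with the $1$-Wasserstein distance between the laws of $X\wedge t$ and $Y\wedge t$ and uses the CDF formula. You instead bound $\mathbb P(Y\le z<X)\ge\{F_Q(z)-F_P(z)\}_+$ pointwise and note that the quantile coupling attains this for all $z$ at once. That gives $L^\star_{s^{(t)}}(P,Q)=\int_{\underline x}^t\{F_Q-F_P\}_+\,dz$ and then $\mathfrak D=A-\gamma B$ immediately. The genuine gap is in the converse at $\gamma=0$ when $d\ge2$.

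The cutoff $t=\overline x$ is not the largest available box: lower-orthant CSD is imposed for every finite $t\ge\underline x$, including $t$ beyond $\overline x$. At $t=\overline x$ the implication you need is false. If $y_j=\overline x_j$ for some $j$, the set $\{z\in[\underline x,\overline x]:y\le z\}$ is Lebesgue-null. The almost-everywhere statement ``$y\le z\Rightarrow x\le z$'' then says nothing about any coordinate of $x$, and no approximation from above inside $D$ is possible. Your observation $x_j\le y_j$ covers only coordinate $j$, not a coordinate $i\ne j$ with $x_i>y_i$.

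Concretely, on $D=[0,1]^2$ let $P=\delta_{(1,0)}$ and $Q=\delta_{(0,1)}$. Then $\ell_{\overline x}((1,0),(0,1))=0$, so CSD at $\gamma=0$ holds for the cutoff $\overline x$, yet FOSD fails. At $t=(2,2)$ one gets $\ell_t=1>0$, so the larger cutoffs do detect the failure.

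The fix is the paper's: choose $t^\circ$ strictly above $\overline x$ in every coordinate. For $x_i>y_i$, the box $\{z:y_i\le z_i<x_i,\ x_j\vee y_j\le z_j\le t^\circ_j\ (j\ne i)\}$ then has positive volume. Every point in it satisfies $y\le z$ and $x\not\le z$, so $\ell_{t^\circ}(x,y)>0$ whenever $x\not\le y$, and Strassen applies. Equivalently, your right-continuity argument now works, because the boxes $\prod_{j}(y_j,y_j+\varepsilon)$ lie in $\Theta_{t^\circ}$ and have positive volume. For $d=1$ your choice $t=\overline x$ happens to suffice, since $\ell_{\overline x}(x,y)=(x-y)_+$ on $D$.
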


\section{Reference-Weighted Stochastic Dominance}
\label{Sec:RWSD}

In this section we introduce a relaxation of integral stochastic orders. 
\subsection{Definition and Interpretation}
\label{Sec:RWSDDefinition}

Let $X$ be a Polish space with Borel $\sigma$-algebra $\mathscr X$, and let
$(\Theta,\mathscr A)$ be a measurable space. Write $\mathcal P(X)$ and
$\mathcal P(\Theta)$ for the corresponding sets of probability measures. Let
$\varphi:X\times\Theta\to\mathbb R$ be
$\mathscr X\otimes\mathscr A$-measurable and assume
$\sup_{(x,\theta)\in X\times\Theta}|\varphi(x,\theta)|<\infty$. For
$\theta\in\Theta$, write $\varphi_\theta(x):=\varphi(x,\theta)$ and orient the
test functions so that larger expected values are preferred. For
$P,Q\in\mathcal P(X)$, define
$\Gamma_{P,Q}(\theta):=
\mathbb E_Q[\varphi_\theta]-\mathbb E_P[\varphi_\theta]$.
The family $\Phi=(\varphi_\theta)_{\theta\in\Theta}$ generates the integral
stochastic order in which $Q$ dominates $P$ when
$\Gamma_{P,Q}(\theta)\ge0$ for every $\theta$; see
\citet{Muller1997StochasticOrders}. The measurability and boundedness
assumptions ensure that all expected-value differences and integrated test
functions below are measurable and bounded.

A leading example of an integral stochastic order is FOSD. Suppose $X$ is equipped with a closed partial order
$\le$. A measurable set $A\subseteq X$ is a lower set if $x\in A$ and $y\le x$
imply $y\in A$.

\begin{example}[FOSD]
\label{example:FOSD}
Let $\mathscr L$ be the family of closed lower sets, equipped with a
$\sigma$-algebra $\mathscr A_{\mathscr L}$ for which the  map
$(x,A)\mapsto\mathbf 1_A(x)$ is
$\mathscr X\otimes\mathscr A_{\mathscr L}$-measurable. Set
$\Theta=\mathscr L$ and $\varphi_A(x)=1-\mathbf 1_A(x)$. Then
$\Gamma_{P,Q}(A)=P(A)-Q(A)$. Hence $Q$ dominates $P$ by FOSD exactly when
$P(A)\ge Q(A)$ for every closed lower set $A$; see
\citet{KamaeKrengelOBrien1977}.
\end{example}

Let $m$ and $M$ be probability measures on $(\Theta,\mathscr A)$, and fix
$0\le k<1\le K<\infty$. Assume $km(B)\le KM(B)$ for every
$B\in\mathscr A$. For finite measures $\mu$ and $\nu$, write $\mu\le\nu$
when $\mu(B)\le\nu(B)$ for every measurable $B$. Define the set of admissible probability measures by 
\begin{equation}
\label{Eq:TwoReferenceClass}
\mathcal P_{k,K;m,M}
:=
\left\{
\Pi\in\mathcal P(\Theta):
km\le\Pi\le KM
\right\}
\end{equation}
and
\[
\mathcal U_{\Phi;k,K;m,M}
:=
\left\{
 u(x)=a+\tau\int_\Theta \varphi_\theta(x)\,\Pi(d\theta):
 a\in\mathbb R,\ \tau\ge0,\
 \Pi\in\mathcal P_{k,K;m,M}
\right\}.
\]
An admissible $\Pi$ can be understood as a weighting of the test-functions. The reference measures $m$ and $M$ restrict how this weight may be distributed: for every measurable $B\subseteq\Theta$, $\Pi$ must assign at least $k m(B)$ and at most $K M(B)$ probability to $B$. Thus, $m$ and $M$ specify lower and upper reference measures for the weights, while $k$ and $K$ control how tightly admissible weightings must follow those references. We now define the reference-weighted stochastic dominance.

\begin{definition}[Reference-weighted stochastic dominance (RWSD)]
\label{Def:RWSD}
We write $Q\succeq_{\Phi;k,K;m,M}P$ if
\[
\mathbb E_Q[u]\ge\mathbb E_P[u]
\qquad
\text{for every }u\in\mathcal U_{\Phi;k,K;m,M}.
\]
\end{definition}

To get some intuition on RWSD, in Example~\ref{example:FOSD}, an admissible $\Pi$ is a probability measure on
the family of lower sets and Definition~\ref{Def:RWSD} is equivalent to
\[
\int_{\mathscr L}Q(A)\,\Pi(dA)
\le
\int_{\mathscr L}P(A)\,\Pi(dA)
\qquad
\text{for every }\Pi\in\mathcal P_{k,K;m,M}.
\]
Hence, FOSD requires the inequality above for every probability measure on $\mathscr L$, equivalently for every closed lower set. Each closed lower set represents a lower-tail event, and FOSD permits all weight to be placed on any one such event. Thus, a single unfavorable lower-tail event rules out dominance, which can make FOSD particularly stringent for multivariate outcomes. RWSD keeps the same family of lower sets but restricts the probability measures used to weight them. The reference measures can reflect which lower-tail events matter in the application and how much weight they may receive. Dominance in RWSD holds when the inequality is satisfied for every weighting allowed by these reference restrictions. 

More generally, RWSD leaves the test functions that generate an integral stochastic order unchanged and restricts only the probability measures used to weight them.

\begin{example}
\label{example:RWSDPoverty}
We illustrate how the reference measures can be chosen in an application using
the setting of \citet{DuclosSahnYounger2006}. They use bivariate stochastic
dominance to compare poverty using household expenditure per capita and
children's height-for-age $z$-scores (HAZ), a measure of children's nutritional
and health status. They distinguish union poverty, in which deprivation in
either dimension is sufficient for poverty, from intersection poverty, in which
deprivation in both dimensions is required, and consider different poverty
lines in the two dimensions.
Let $z_1,\ldots,z_R$ and $h_1,\ldots,h_S$ denote expenditure and HAZ poverty
lines of interest. For each pair $(z_r,h_s)$, define
$
A^{I}_{rs}
:=
\{x:x_1\le z_r,\ x_2\le h_s\}
$
and
$
A^{U}_{rs}
:=
\{x:x_1\le z_r\text{ or }x_2\le h_s\}.
$
These lower sets represent intersection and union poverty, respectively, under
the different poverty lines.

Suppose there is no reason ex ante to favor one of these poverty events over
another. A natural choice is to give them equal reference weight:
$
m=M=(2RS)^{-1}
\sum_{r=1}^R\sum_{s=1}^S
(\delta_{A^{I}_{rs}}+\delta_{A^{U}_{rs}}).
$
An admissible measure can then be written as
$
\Pi=
\sum_{r=1}^R\sum_{s=1}^S
\{\pi^{I}_{rs}\delta_{A^{I}_{rs}}
+\pi^{U}_{rs}\delta_{A^{U}_{rs}}\},
$
where the weights sum to one and each lies between
$k/(2RS)$ and $K/(2RS)$. Hence, $k$ and $K$ determine how much more or less
weight can be placed on any particular poverty line or poverty definition
relative to its reference weight. Dominance in RWSD holds when
$
\sum_{r=1}^R\sum_{s=1}^S
\{\pi^{I}_{rs}Q(A^{I}_{rs})+\pi^{U}_{rs}Q(A^{U}_{rs})\}
\le
\sum_{r=1}^R\sum_{s=1}^S
\{\pi^{I}_{rs}P(A^{I}_{rs})+\pi^{U}_{rs}P(A^{U}_{rs})\}
$
for every such admissible weighting. This condition is much less stringent than FOSD. 
One could also study other reference measures that permit greater
weight to poverty lines that are more important in the
application.
\end{example}

%\begin{remark}
%\label{Rem:RWSDRepresentation}
%RWSD depends on the indexed test functions and their reference measures, i.e., two families
%can generate the same exact order but give different RWSD. The analyst must therefore specify the indexed family together with
%$m$ and $M$ to apply RWSD. 
%\end{remark}

\subsection{CVaR Characterization}
\label{Sec:RWSDCVaR}

In this section we show that dominance in RWSD reduces to solving a CVaR optimization problem. We first need the following notations. Define
\begin{equation}
\label{Eq:ResidualReference}
\overline R:=\frac{KM-km}{K-k},
\qquad
C:=\frac{K-k}{1-k}.
\end{equation}
Note that $\overline R$ is a probability
measure, $C\ge1$, and $km+(1-k)\overline R$ belongs to
$\mathcal P_{k,K;m,M}$, so the admissible class is nonempty.
 For a probability measure $\rho$, a bounded measurable $L$, and
$C\ge1$, define
\begin{equation}
\label{Eq:CVAR}
\CVaR_C^\rho(L)
:=
\inf_{\eta\in\mathbb R}
\left\{
\eta+C\int_\Theta(L-\eta)_+\,d\rho
\right\}.
\end{equation}
This is the Rockafellar--Uryasev scalar representation of CVaR at confidence
level $1-1/C$; see Theorem~10 and Equation~(28) in
\citet{RockafellarUryasev2002}. 

%Appendix~\ref{App:RWSDCVaR} gives the equivalent maximization over
%probability measures used in the proof.

\begin{theorem}[CVaR characterization]
\label{Thm:CVAR}
For any $P,Q\in\mathcal P(X)$,
$Q\succeq_{\Phi;k,K;m,M}P$ if and only if
\begin{equation}
\label{Eq:TwoReferenceDominanceCVAR}
-k\int_\Theta \Gamma_{P,Q}\,dm
+
(1-k)\CVaR_C^{\overline R}(-\Gamma_{P,Q})
\le0.
\end{equation}
\end{theorem}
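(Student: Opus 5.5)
The plan is to reduce RWSD to a single worst-case linear functional over the admissible class, and then to identify that worst case with a CVaR through a change of variables. First, because $\mathbb E_Q[u]-\mathbb E_P[u]=\tau\int_\Theta\Gamma_{P,Q}\,d\Pi$ for $u=a+\tau\int\varphi_\theta\,\Pi(d\theta)$, the constant $a$ cancels and $\tau\ge0$ only scales the difference. Fubini applies here since $\varphi$ is bounded and jointly measurable. So Definition~\ref{Def:RWSD} is equivalent to
\[
\sup_{\Pi\in\mathcal P_{k,K;m,M}}\int_\Theta(-\Gamma_{P,Q})\,d\Pi\le0 .
\]
Write $L:=-\Gamma_{P,Q}$, which is bounded and measurable. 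It then suffices to prove the identity
\[
\sup_{\Pi\in\mathcal P_{k,K;m,M}}\int L\,d\Pi = k\int L\,dm+(1-k)\CVaR_C^{\overline R}(L).
\]

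Second, I reparametrize the admissible class. Given admissible $\Pi$, set $\nu:=(\Pi-km)/(1-k)$. The lower bound $\Pi\ge km$ makes $\nu$ a nonnegative measure, and since $\Pi$ and $m$ are both probability measures, $\nu$ has total mass one. The upper bound $\Pi\le KM$ is equivalent to $(1-k)\nu\le KM-km=(K-k)\overline R$, that is, $\nu\le C\overline R$. Conversely, any probability $\nu$ with $\nu\le C\overline R$ gives an admissible $\Pi=km+(1-k)\nu$. The case $k=0$ is included, because then $\Pi=\nu$ and $\overline R=M$. This step uses the hypothesis $km\le KM$ to ensure $\overline R\ge0$. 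Hence the supremum equals $k\int L\,dm+(1-k)\sup\{\int L\,d\nu:\nu\in\mathcal P(\Theta),\ \nu\le C\overline R\}$.

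Third, and this is the main step, I prove the dual representation
\[
\sup\Bigl\{\int L\,d\nu:\ \nu\ \text{a probability},\ \nu\le C\overline R\Bigr\}=\CVaR_C^{\overline R}(L)
\]
on a general measurable space, where quantile constructions may require splitting atoms. The two inequalities go as follows.

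\emph{Weak duality.} For any feasible $\nu$ and any $\eta\in\mathbb R$,
\[
\int L\,d\nu=\eta+\int(L-\eta)\,d\nu\le\eta+\int(L-\eta)_+\,d\nu\le\eta+C\int(L-\eta)_+\,d\overline R .
\]
Taking the infimum over $\eta$ gives $\le$.

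\emph{Attainment.} I take $\eta^\ast$ to be a $(1-1/C)$-quantile of $L$ under $\overline R$, so that $\overline R(L>\eta^\ast)\le 1/C\le\overline R(L\ge\eta^\ast)$. I then define $\nu=C\overline R$ on $\{L>\eta^\ast\}$, $\nu=c\,C\overline R$ on $\{L=\eta^\ast\}$, and $\nu=0$ elsewhere, with $c\in[0,1]$ chosen so that $\nu$ has total mass one. No atom splitting is needed, because scaling the restriction of $\overline R$ to the level set by the constant $c$ suffices. A direct computation gives $\int L\,d\nu=\eta^\ast+C\int(L-\eta^\ast)_+\,d\overline R$, which closes the gap. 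Alternatively, I could cite Rockafellar--Uryasev for this representation, but I would give the explicit construction so that the argument stays self-contained.

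The obstacle I expect is mainly bookkeeping: handling the boundary case $C=1$, and checking that the $\eta$-infimum is finite and attained for bounded $L$. Both follow because the objective in $\eta$ is convex, tends to $+\infty$ in both directions when $C>1$, and equals $\int L\,d\overline R$ for $\eta\le\inf L$ when $C=1$. Combining the three steps gives \eqref{Eq:TwoReferenceDominanceCVAR}.
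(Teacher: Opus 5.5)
Your proof is correct and follows the same route as the paper. Both arguments use the Fubini reduction to $\int_\Theta\Gamma_{P,Q}\,d\Pi\ge0$ for every admissible $\Pi$, then the reparametrization $\Pi=km+(1-k)\nu$ with $\nu\in\mathcal P(\Theta)$ and $\nu\le C\overline R$, and then the risk-envelope form of $\CVaR_C^{\overline R}$. The only difference is that the paper cites the envelope identity from the literature. You instead prove it directly, by weak duality and by the explicit quantile measure scaled by a constant on the level set $\{L=\eta^\ast\}$, and that verification is valid.
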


The theorem converts optimization over the admissible probability measures
into a scalar CVaR calculation which is important for computation as we discuss in the next section. 

\subsection{Computation}
\label{Sec:RWSDComputation}

Using the scalar representation in \eqref{Eq:CVAR}, the RWSD condition in
\eqref{Eq:TwoReferenceDominanceCVAR} is equivalent to
\[
\inf_{\eta\in\mathbb R}
\left\{
-k\int_\Theta \Gamma_{P,Q}\,dm
+(1-k)\eta
+(K-k)\int_\Theta(-\Gamma_{P,Q}-\eta)_+\,d\overline R
\right\}
\le0.
\]
Once $\Gamma_{P,Q}$ and its integrals under the reference measures are
computed, checking RWSD requires only the one-dimensional minimization over
$\eta$. 

For finite-support reference measures, the calculation is explicit. Let
$
m:=\sum_{j=1}^J p_j\delta_{\theta_j}$, and 
$M:=\sum_{j=1}^J q_j\delta_{\theta_j}$,
where $\theta_1,\ldots,\theta_J$ are the union of the supports of $m,M$. Feasibility
means $kp_j\le Kq_j$ for every $j$. By
\eqref{Eq:TwoReferenceDominanceCVAR}, RWSD is equivalent to
\begin{equation}
\label{Eq:FiniteWorstLoss}
-k\sum_{j=1}^J p_j\Gamma_{P,Q}(\theta_j)
+
\max_{\substack{0\le v_j\le Kq_j-kp_j\\
                 \sum_{j=1}^Jv_j=1-k}}
\sum_{j=1}^Jv_j\{-\Gamma_{P,Q}(\theta_j)\}
\le0.
\end{equation}
The maximization in \eqref{Eq:FiniteWorstLoss} is a well known  continuous
knapsack problem. It is solved by ordering the losses
$-\Gamma_{P,Q}(\theta_j)$ from largest to smallest and allocating the
remaining probability $1-k$ in that order, up to the bound
$Kq_j-kp_j$ for each $j$.

Computing the $J$ expected-value differences from $N$ observations requires
$JN$ test-function evaluations. We then sort these $J$ differences, which
requires $O(J\log J)$ operations. Thus, treating each test-function evaluation
as one operation, the total computational cost is $O(JN+J\log J)$. Importantly, the outcome dimension can affect the
cost of evaluating each test function and the number of test functions selected
by the analyst but 
the RWSD calculation does not depend on the outcome vectors themselves.

\subsection{Examples: FOSD, Lower-Orthant, and Increasing-Concave Dominance}
\label{Sec:RWSDExamples}

The following examples discuss RWSD under familiar
stochastic orders.

\begin{example}[FOSD under RWSD]
\label{example:FOSD-RWSD}
Consider Example~\ref{example:FOSD}  first on
$X=[\underline x,\overline x]\subset\mathbb R$. The lower sets are
$A_t=[\underline x,t]$, and suppose $m$ and $M$ have densities $w_m$ and
$w_M$. In this case RWSD has an exact marginal-utility interpretation: its
nonconstant absolutely continuous utilities are precisely the increasing
utilities satisfying
\begin{equation}
\label{Eq:RWSDFOSDMarginalUtility}
k w_m(x)
\le
\frac{u'(x)}{\int_{\underline x}^{\overline x}u'(t)\,dt}
\le
K w_M(x)
\qquad\text{for almost every }x.
\end{equation}
Equation~\eqref{Eq:RWSDFOSDMarginalUtility} requires at least
$k w_m(x)$ and allows at most $K w_M(x)$ of normalized marginal utility at
$x$. These bounds resemble those used in univariate almost stochastic
dominance, see
\citet{LeshnoLevy2002}. The derivation is in
Appendix~\ref{App:RWSDExampleDerivations}.

For $d\ge2$, closed lower sets do not have a single scalar index, so a
measure on lower-sets cannot generally be translated into separate
coordinatewise marginal utilities. RWSD is then specified directly by the weights allowed on the lower-sets as in Example \ref{example:RWSDPoverty}. 
\end{example}

\begin{example}[Lower-orthant RWSD]
\label{example:LowerOrthant-RWSD}
For simplicity consider the case 
$X=[\underline x_1,\overline x_1]\times
[\underline x_2,\overline x_2]$ and we use the disjoint index space
\[
\Theta^{\rm LO}=\Theta_1\sqcup\Theta_2\sqcup\Theta_{12},
\qquad
\Theta_i=[\underline x_i,\overline x_i],
\qquad
\Theta_{12}=X,
\]
with test functions
\[
\varphi_{i,t_i}(x)=1-\mathbf 1_{\{x_i\le t_i\}},
\qquad
\varphi_{12,t}(x)=1-\mathbf 1_{\{x_1\le t_1,\ x_2\le t_2\}}.
\]
Suppose the reference measures have component densities with respect to
Lebesgue measure on each component,
\[
(w_{m,1},w_{m,2},w_{m,12})
\qquad\text{and}\qquad
(w_{M,1},w_{M,2},w_{M,12}).
\]
It is immediate that every generated utility is increasing and submodular. Write
$u_i=\partial u/\partial x_i$ and
$u_{12}=\partial^2u/(\partial x_1\partial x_2)$ where these derivatives exist.
The reference bounds required by RWSD translate directly in this case into restrictions on the utilities
generated by RWSD. For a nonconstant  generated utility with normalized scale $\tau=1$, wherever the derivatives exist we have 
\[
k w_{m,12}(x)
\le -u_{12}(x)
\le K w_{M,12}(x)
\quad\text{almost everywhere}.
\]
Thus, the reference measures restrict the degree of substitutability between attributes in the utilities generated
by RWSD. Appendix~\ref{App:RWSDExampleDerivations} provides the derivation and shows further that the marginal reference densities bound the marginal utilities, while
the joint reference densities bound $-u_{12}$ as the inequality above.
\end{example}

\begin{example}[Increasing-concave dominance under RWSD]
\label{example:SOSD-RWSD}
Let $X=\Theta=[0,\overline x]$ and
$\varphi_t(x)=\min\{x,t\}$. Because $\varphi_0\equiv0$, assume that the
reference measures put no mass at $0$. Every admissible integrating measure
is then supported on $(0,\overline x]$. These test functions produce the usual
increasing-concave order on a bounded interval. Write $u'_+$ and $u'_-$ for
the right and left derivatives. A nonconstant increasing concave utility with
$0<u'_+(0)<\infty$ has a unique probability measure $\Pi_u$ satisfying
\[
u(x)=u(0)+u'_+(0)
\int_{(0,\overline x]}\min\{x,t\}\,\Pi_u(dt),
\qquad
\Pi_u([x,\overline x])=\frac{u'_-(x)}{u'_+(0)}.
\]
The standard representation and its endpoint conventions are stated in
Lemma~\ref{Lem:SlopeMeasure} in Appendix~\ref{App:RWSDExampleDerivations}.
Now suppose that $m$ and $M$ have densities $w_m$ and
$w_M$ on $(0,\overline x)$, may have atoms at $\overline x$, and have no
other singular parts. If $u'_+$ is absolutely continuous on
$(0,\overline x)$, the RWSD restriction in this setting becomes
\[
k w_m(t)
\le
\frac{-u''(t)}{u'_+(0)}
\le
K w_M(t)
\quad\text{for almost every }t,
\]
together with the corresponding bounds on the atom at $\overline x$.
Hence $m$ specifies where curvature must be present, while $M$ limits where
curvature may concentrate.

For multivariate outcomes, let $X\subset\mathbb R^d$ be compact and convex. As in the FOSD case, RWSD can be used in this setting too by selecting a family of increasing concave test functions that are interesting for the specific application under consideration. One  natural choice is $\psi((a,t),x)=\min\{a^\top x,t\}$ for compact sets of nonnegative
weights $a$ and caps $t$. 
\end{example}

\section{Finite-Sample Guarantees}
\label{sec:FiniteSamFORBOTHORDERS}

In this section, we establish finite-sample guarantees for both CSD and RWSD. As outlined in the introduction, the key to deriving these guarantees lies in our optimal transport and CVaR characterizations, which reduce verifying both conditions to checking the non-positivity of a scalar parameter. Consequently, establishing these guarantees reduces to constructing a one-sided concentration bound  around the empirical estimator.

\subsection{Finite-Sample Guarantee for CSD}
\label{Sec:CSDStatisticalCertification}

The finite-sample analysis in this section builds directly on Theorem~\ref{Thm:CompensatedScoreTransport}. The identity $\mathfrak D_{s,\lambda,\gamma}(P,Q)=(1-\gamma)L_s^\star(P,Q)-\gamma\Delta_s(P,Q)$ shows that, for every $\gamma$, it is enough to obtain an upper bound on the minimum expected score decrease $L_s^\star(P,Q)$ and a lower bound on the expected weighted score difference $\Delta_s(P,Q)$ to bound $\mathfrak D_{s,\lambda,\gamma}(P,Q)$. Because the same two bounds apply to every $\gamma$ at once, we can use the sample to find the smallest $\gamma$ for which CSD is guaranteed without testing different values of $\gamma$ separately.

Let $X_1,\ldots,X_{N_P}$ be i.i.d.\ from $P$ and
$Y_1,\ldots,Y_{N_Q}$ be i.i.d.\ from $Q$, with the two samples independent.
Define the empirical distributions 
$
\widehat P_{N_P}:=\frac1{N_P}\sum_{i=1}^{N_P}\delta_{X_i}$ and 
$\widehat Q_{N_Q}:=\frac1{N_Q}\sum_{j=1}^{N_Q}\delta_{Y_j}$ 
and
\[
\widehat\Delta_s
:=
\frac1{N_Q}\sum_{j=1}^{N_Q}S(Y_j)
-
\frac1{N_P}\sum_{i=1}^{N_P}S(X_i),
\qquad
\widehat L_s^\star
:=
L_s^\star(\widehat P_{N_P},\widehat Q_{N_Q}).
\]
Assume that $P$ and $Q$ are supported on known sets
$D_P,D_Q\subseteq D$, respectively.

To obtain a  guarantee on $\mathfrak D_{s,\lambda,\gamma}(P,Q)$ from the sample, we construct an upper
confidence bound on $L_s^\star(P,Q)$ and a lower confidence bound on
$\Delta_s(P,Q)$. The confidence bounds below use standard bounded-difference concentration.
For a sample mean, the relevant bound is the range of the observations; here,
we instead use the largest change in the empirical quantity when one
observation is replaced within $D_P$ or $D_Q$. This gives sharper bounds than
using the diameter of the supports directly. 
In particular, we define the following constants 
\[
\begin{aligned}
c_P^L
&:=
\sup_{x,x'\in D_P,\ y\in D_Q}
|\ell_s(x,y)-\ell_s(x',y)|,
&
c_Q^L
&:=
\sup_{x\in D_P,\ y,y'\in D_Q}
|\ell_s(x,y)-\ell_s(x,y')|,\\
c_P^S
&:=
\sup_{x,x'\in D_P}|S(x)-S(x')|,
&
c_Q^S
&:=
\sup_{y,y'\in D_Q}|S(y)-S(y')|.
\end{aligned}
\]
Assume that $c_P^L,c_Q^L,c_P^S,c_Q^S<\infty$. For $\delta\in(0,1)$, define
\[
\begin{aligned}
\overline L_\delta
&:=
\widehat L_s^\star
+
\sqrt{
\frac{\log(2/\delta)}{2}
\left(
\frac{(c_P^L)^2}{N_P}
+
\frac{(c_Q^L)^2}{N_Q}
\right)
},\\
\underline\Delta_\delta
&:=
\widehat\Delta_s
-
\sqrt{
\frac{\log(2/\delta)}{2}
\left(
\frac{(c_P^S)^2}{N_P}
+
\frac{(c_Q^S)^2}{N_Q}
\right)
}.
\end{aligned}
\]
For $\gamma\in[0,1]$, set
$\overline{\mathfrak D}_\delta(\gamma):=
(1-\gamma)\overline L_\delta-\gamma\underline\Delta_\delta$.

\begin{proposition}
\label{Prop:CSDStatisticalCertificate}
With probability at least $1-\delta$,
\begin{equation}
\label{Eq:CSDUniformGammaBound}
\mathfrak D_{s,\lambda,\gamma}(P,Q)
\le
\overline{\mathfrak D}_\delta(\gamma)
\qquad
\text{for every }\gamma\in[0,1].
\end{equation}
Consequently, for any measurable data-dependent
$\widehat\gamma\in[0,1]$,
\begin{equation}
\label{Eq:CSDFalseCertification}
\mathbb P\left(
\overline{\mathfrak D}_\delta(\widehat\gamma)\le0
\text{ and }
P\not\preceq_{s,\lambda,\widehat\gamma}^{\rm CSD}Q
\right)
\le\delta.
\end{equation}
\end{proposition}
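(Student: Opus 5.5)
By Theorem~\ref{Thm:CompensatedScoreTransport}, $\mathfrak D_{s,\lambda,\gamma}(P,Q)=(1-\gamma)L_s^\star(P,Q)-\gamma\Delta_s(P,Q)$ holds for every $\gamma\in[0,1]$. The coefficients $1-\gamma$ and $\gamma$ are nonnegative, so the uniform bound \eqref{Eq:CSDUniformGammaBound} follows on the event $E:=\{L_s^\star(P,Q)\le\overline L_\delta\}\cap\{\Delta_s(P,Q)\ge\underline\Delta_\delta\}$. On $E$ we get $\mathfrak D_{s,\lambda,\gamma}(P,Q)\le(1-\gamma)\overline L_\delta-\gamma\underline\Delta_\delta=\overline{\mathfrak D}_\delta(\gamma)$ simultaneously for all $\gamma$. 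It therefore suffices to show that each of the two one-sided events fails with probability at most $\delta/2$, and then take a union bound.

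For $\Delta_s$, the estimator $\widehat\Delta_s$ is a function of the $N_P+N_Q$ independent observations. Replacing $X_i$ within $D_P$ changes it by at most $c_P^S/N_P$, and replacing $Y_j$ within $D_Q$ changes it by at most $c_Q^S/N_Q$. Its expectation is exactly $\Delta_s(P,Q)$. McDiarmid's inequality then gives $\mathbb P(\Delta_s(P,Q)<\widehat\Delta_s-t)\le\exp(-2t^2/\sum c_k^2)$ with $\sum c_k^2=(c_P^S)^2/N_P+(c_Q^S)^2/N_Q$. Setting the right side to $\delta/2$ yields exactly the stated deviation term $\sqrt{\tfrac{\log(2/\delta)}{2}(\cdot)}$.

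For $L_s^\star$, I would handle the two ingredients separately. \emph{Bounded differences:} replacing one atom $X_i$ by $X_i'\in D_P$ moves mass $1/N_P$. Given an optimal coupling $\pi$ of $(\widehat P,\widehat Q)$, reassign the mass that $\pi$ sends from $X_i$ to the new point $X_i'$ along the same targets. This gives a feasible coupling whose cost differs by at most $(1/N_P)\sup_y|\ell_s(X_i,y)-\ell_s(X_i',y)|\le c_P^L/N_P$. Applying the argument symmetrically shows $|\Delta L|\le c_P^L/N_P$, and likewise $c_Q^L/N_Q$ for the $Y$'s. Infima are attained by finiteness, or one can use $\epsilon$-optimal couplings. \emph{Bias direction:} I need $L_s^\star(P,Q)\le\mathbb E\,\widehat L_s^\star$, and this is the step I expect to require the most care. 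Take an optimal empirical coupling $\pi_N=\sum_{i,j}\pi_{ij}\delta_{(X_i,Y_j)}$, chosen measurably. Then $\mathbb E\pi_N$, the mixture of the random couplings, has marginals $\mathbb E\widehat P=P$ and $\mathbb E\widehat Q=Q$, so it lies in $\mathcal C(P,Q)$. Linearity gives $L_s^\star(P,Q)\le\int\ell_s\,d(\mathbb E\pi_N)=\mathbb E\widehat L_s^\star$. To avoid measurable-selection issues, I would instead use the dual (Kantorovich) representation: $L_s^\star$ is a supremum of linear functionals $\int\phi\,dP+\int\psi\,dQ$ over admissible potentials. A supremum of linear functionals is convex in $(P,Q)$, and Jensen's inequality gives the same conclusion. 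McDiarmid applied to the upper tail of $\widehat L_s^\star$ then gives $\mathbb P(L_s^\star(P,Q)>\overline L_\delta)\le\mathbb P(\widehat L_s^\star-\mathbb E\widehat L_s^\star<-t)\le\delta/2$.

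Finally, \eqref{Eq:CSDFalseCertification} follows from the first part. On $E$, which has probability at least $1-\delta$, the event $\overline{\mathfrak D}_\delta(\widehat\gamma)\le0$ forces $\mathfrak D_{s,\lambda,\widehat\gamma}(P,Q)\le0$ for the realized $\widehat\gamma$, because the bound holds for all $\gamma$ simultaneously. By Theorem~\ref{Thm:CompensatedScoreTransport}, the infimum is attained, so this means $P\preceq^{\rm CSD}_{s,\lambda,\widehat\gamma}Q$. Hence the bad event is contained in $E^c$, and the dependence of $\widehat\gamma$ on the data causes no difficulty.
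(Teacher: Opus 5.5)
Your proposal is correct and follows essentially the same route as the paper: the identity from Theorem~\ref{Thm:CompensatedScoreTransport}, then McDiarmid with the same bounded-difference constants for $\widehat L_s^\star$ (only one row of the transport problem changes) and for $\widehat\Delta_s$, then a union bound. The bias inequality $L_s^\star(P,Q)\le\mathbb E[\widehat L_s^\star]$ also comes from a dual representation, since the paper applies Theorem~\ref{Thm:CSDUtilityRepresentation} at $\gamma=0$ with unbiasedness of empirical averages for each fixed utility, which is your Kantorovich/Jensen argument. The paper treats a few details explicitly that you leave implicit: measurability and integrability of $\widehat L_s^\star$, and the degenerate case $(c_P^L)^2/N_P+(c_Q^L)^2/N_Q=0$, where the bound follows directly from the bias inequality.
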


The same event gives an upper confidence bound for the smallest population
tolerance also. Define
$\overline\gamma_\delta:=
\inf\{\gamma\in[0,1]:
\overline{\mathfrak D}_\delta(\gamma)\le0\}$, with
$\inf\varnothing:=+\infty$. Since $\overline L_\delta\ge0$,
\begin{equation}
\label{Eq:CSDCriticalGammaUpperBound}
\overline\gamma_\delta
=
\begin{cases}
0,
& \overline L_\delta=0,\\[1mm]
\displaystyle
\frac{\overline L_\delta}
     {\overline L_\delta+\underline\Delta_\delta},
& \overline L_\delta>0
  \text{ and }\underline\Delta_\delta\ge0,\\[4mm]
+\infty,
& \overline L_\delta>0
  \text{ and }\underline\Delta_\delta<0.
\end{cases}
\end{equation}

\begin{corollary}[Confidence bound for the smallest tolerance]
\label{Cor:CSDCriticalGammaCertificate}
With probability at least $1-\delta$,
$\gamma_{s,\lambda}^\star(P,Q)\le\overline\gamma_\delta$ and, whenever
$\overline\gamma_\delta<\infty$,
$P\preceq_{s,\lambda,\gamma}^{\rm CSD}Q$ for every
$\gamma\in[\overline\gamma_\delta,1]$.
\end{corollary}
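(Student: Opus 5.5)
The plan is to work on the high-probability event $E$ of Proposition~\ref{Prop:CSDStatisticalCertificate}. That proposition gives $\mathbb P(E)\ge1-\delta$, and on $E$ we have $\mathfrak D_{s,\lambda,\gamma}(P,Q)\le\overline{\mathfrak D}_\delta(\gamma)$ simultaneously for every $\gamma\in[0,1]$. Everything then becomes a deterministic argument on $E$ that does not depend on $\gamma$. Because the bound holds uniformly in $\gamma$, no union bound over a grid is needed. This uniformity is the only point where care is required, and it is already supplied by the proposition.

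Fix an outcome in $E$ and assume $\overline\gamma_\delta<\infty$. First I would check from the closed form \eqref{Eq:CSDCriticalGammaUpperBound} that the infimum defining $\overline\gamma_\delta$ is attained and that $\overline{\mathfrak D}_\delta(\gamma)\le0$ for all $\gamma\in[\overline\gamma_\delta,1]$. The function $\gamma\mapsto(1-\gamma)\overline L_\delta-\gamma\underline\Delta_\delta$ is affine. At $\gamma=1$ it equals $-\underline\Delta_\delta$.

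In the case $\overline L_\delta=0$, the value is $-\gamma\underline\Delta_\delta$. For this to be $\le0$ at $\gamma=0$ is automatic, and the formula gives $\overline\gamma_\delta=0$. For larger $\gamma$, however, we need $\underline\Delta_\delta\ge0$. If instead $\underline\Delta_\delta<0$, then $\overline{\mathfrak D}_\delta(\gamma)>0$ for $\gamma>0$. In that subcase I would bypass $\overline{\mathfrak D}_\delta$ and argue in the population. We have $\mathfrak D_{s,\lambda,0}(P,Q)\le0$, so $L_s^\star(P,Q)=0$, and then Proposition~\ref{Prop:CSDOrderStructure}'s monotonicity in $\gamma$ gives dominance for every $\gamma\in[0,1]$.

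In the case $\overline L_\delta>0$ with $\underline\Delta_\delta\ge0$, the affine function is strictly decreasing. It vanishes exactly at $\overline L_\delta/(\overline L_\delta+\underline\Delta_\delta)\in(0,1]$, so it is nonpositive on $[\overline\gamma_\delta,1]$.

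Given this, on $E$ we get $\mathfrak D_{s,\lambda,\overline\gamma_\delta}(P,Q)\le\overline{\mathfrak D}_\delta(\overline\gamma_\delta)\le0$. By Theorem~\ref{Thm:CompensatedScoreTransport}, the infimum in $\mathfrak D$ is attained, so some coupling satisfies \eqref{Eq:ScoreCompensatedStrassen}, i.e.\ $P\preceq^{\rm CSD}_{s,\lambda,\overline\gamma_\delta}Q$. The monotonicity in Proposition~\ref{Prop:CSDOrderStructure} then extends dominance to every $\gamma\in[\overline\gamma_\delta,1]$; alternatively one can apply the pointwise bound at each such $\gamma$. By the definition of $\gamma^\star_{s,\lambda}(P,Q)$ as an infimum, this yields $\gamma^\star_{s,\lambda}(P,Q)\le\overline\gamma_\delta$. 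When $\overline\gamma_\delta=+\infty$, the inequality is trivial.

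The main potential snag is the boundary case $\overline L_\delta=0$ with $\underline\Delta_\delta<0$, handled above through $L_s^\star(P,Q)=0$ and monotonicity. Apart from that case, the argument is a direct application of the earlier results.
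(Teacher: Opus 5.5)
Your proposal is correct and follows essentially the same route as the paper. On the uniform event of Proposition~\ref{Prop:CSDStatisticalCertificate}, a finite $\overline\gamma_\delta$ is attained, so $\mathfrak D_{s,\lambda,\overline\gamma_\delta}(P,Q)\le0$; this gives $\gamma_{s,\lambda}^\star(P,Q)\le\overline\gamma_\delta$, and nesting in $\gamma$ extends dominance to $[\overline\gamma_\delta,1]$. Your separate treatment of $\overline L_\delta=0$, $\underline\Delta_\delta<0$ is harmless but redundant: there $\overline\gamma_\delta=0$ and $\overline{\mathfrak D}_\delta(0)=0$, so evaluating the bound at $\overline\gamma_\delta$ and invoking nesting already covers that case.
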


Because the inequality in \eqref{Eq:CSDUniformGammaBound} holds simultaneously for every $\gamma$, the same sample can be used to determine the smallest $\gamma$ for which the  domination in CSD is guaranteed.   
The result requires deterministic bounds on how much replacing one observation
can change the empirical minimum score decrease and the sample difference
$\widehat\Delta_s$. Appendix~\ref{App:CSDStatProofs}
 discusses these bounds. 
Importantly, once these bounds are fixed, the confidence margins have
no additional term that grows explicitly with the outcome dimension.

\subsection{Finite-Sample Guarantee for RWSD}
\label{Sec:FiniteSampleGuarantees}

In this section we utilize our CVaR formulation to provide finite sample guarantees for RWSD. Suppose that $P$ and $Q$ are observed through independent
samples. For a
bounded measurable $G:\Theta\to\mathbb R$, define
\[
V(G)
:=
\sup_{\Pi\in\mathcal P_{k,K;m,M}}
\int_\Theta G(\theta)\,\Pi(d\theta).
\]
By Definition~\ref{Def:RWSD},  RWSD holds exactly when $V(-\Gamma_{P,Q})$   is nonpositive. We now provide statistical guarantees for this non-positivity for empirical measures.  We need the following notations. 

Let $Z_1^P,\ldots,Z_{n_P}^P$ be i.i.d.\ from $P$, let
$Z_1^Q,\ldots,Z_{n_Q}^Q$ be i.i.d.\ from $Q$, and assume that the samples are
independent. Write
$
\widehat P_{n_P}:=\frac1{n_P}\sum_{i=1}^{n_P}\delta_{Z_i^P} $  and
$\widehat Q_{n_Q}:=\frac1{n_Q}\sum_{j=1}^{n_Q}\delta_{Z_j^Q}$ for the empirical distributions. 
Then
$
-\Gamma_{\widehat P_{n_P},\widehat Q_{n_Q}}(\theta)
=
\mathbb E_{\widehat P_{n_P}}[\varphi_\theta]
-
\mathbb E_{\widehat Q_{n_Q}}[\varphi_\theta]. 
$

Let $b$ be a uniform range bound:
$
\sup_{x,x'\in X}|\varphi_\theta(x)-\varphi_\theta(x')|\le b$
for every $\theta\in\Theta$.

The test-function family, reference measures, constants $k,K$, and range
bound are fixed independently of the samples used for the guarantee. In principle one could use a data-dependent
procedure to choose these parameters by using sample splitting.

\begin{theorem}[Finite-sample RWSD guarantee]
\label{Thm:RWSD_STATS}
The random variable $V(-\Gamma_{\widehat P_{n_P},\widehat Q_{n_Q}})$ is measurable and for every $\delta\in(0,1)$,
\begin{equation}
\label{Eq:FiniteSampleCertificationError}
\mathbb P\left(
V(-\Gamma_{\widehat P_{n_P},\widehat Q_{n_Q}})
+
\varepsilon_{n_P,n_Q}(\delta)
\le0
\text{ and }
Q\not\succeq_{\Phi;k,K;m,M}P
\right)
\le
\delta,
\end{equation}
where
\begin{equation}
\label{Eq:StatMargin}
\varepsilon_{n_P,n_Q}(\delta)
:=
b
\sqrt{
\frac{
\left(\frac1{n_P}+\frac1{n_Q}\right)
\{\log(1/\delta)+(1-k)\log C\}
}{2}
}.
\end{equation}
\end{theorem}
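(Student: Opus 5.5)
The proof will be a PAC-Bayes (change-of-measure) argument. The key observation is that the entropy cost of every admissible weighting, measured relative to one fixed reference measure, is at most $(1-k)\log C$, and this is exactly the extra term in \eqref{Eq:StatMargin}. Write $\widehat\Gamma:=\Gamma_{\widehat P_{n_P},\widehat Q_{n_Q}}$ and $\Gamma:=\Gamma_{P,Q}$, and set $\Delta(\theta):=\widehat\Gamma(\theta)-\Gamma(\theta)$.

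\textbf{Step 1: reduce to a one-sided uniform deviation.} Since $V$ is a supremum of linear functionals, it is subadditive. Hence $V(-\Gamma)\le V(-\widehat\Gamma)+V(\Delta)$. On the event in \eqref{Eq:FiniteSampleCertificationError} we have $V(-\widehat\Gamma)+\varepsilon\le0$, and failure of RWSD means $V(-\Gamma)>0$ by Definition~\ref{Def:RWSD}. Together these force $V(\Delta)>\varepsilon_{n_P,n_Q}(\delta)$, so it suffices to show $\mathbb P(V(\Delta)>\varepsilon)\le\delta$.

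\textbf{Step 2: entropy bound for admissible weightings.} Take the base measure $\Pi_0:=km+(1-k)\overline R\in\mathcal P_{k,K;m,M}$. Any admissible $\Pi$ can be written as $\Pi=km+(1-k)\rho$, where $\rho:=(\Pi-km)/(1-k)$ is a probability measure. The constraint $\Pi\le KM$ gives $\rho\le (KM-km)/(1-k)=C\overline R$, so $d\rho/d\overline R\le C$ and therefore $\mathrm{KL}(\rho\,\|\,\overline R)\le\log C$. Joint convexity of KL divergence then yields
\[
\mathrm{KL}(\Pi\,\|\,\Pi_0)\le k\,\mathrm{KL}(m\,\|\,m)+(1-k)\,\mathrm{KL}(\rho\,\|\,\overline R)\le(1-k)\log C .
\]

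\textbf{Step 3: Donsker--Varadhan plus Hoeffding.} Fix a deterministic $\lambda>0$. For each admissible $\Pi$, the Donsker--Varadhan inequality gives
\[
\int\Delta\,d\Pi\le\lambda^{-1}\Bigl[(1-k)\log C+\log\int e^{\lambda\Delta}\,d\Pi_0\Bigr].
\]
The right-hand side does not depend on $\Pi$, so the same bound holds for $V(\Delta)$.

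For fixed $\theta$, $\Delta(\theta)$ is a sum of independent centered terms with ranges $b/n_P$ and $b/n_Q$. Hoeffding's lemma therefore gives $\mathbb E e^{\lambda\Delta(\theta)}\le\exp\{\lambda^2 s/8\}$, where $s:=b^2(1/n_P+1/n_Q)$. By Fubini, $\mathbb E\int e^{\lambda\Delta}\,d\Pi_0\le e^{\lambda^2s/8}$. Markov's inequality then shows that, with probability at least $1-\delta$, $\log\int e^{\lambda\Delta}\,d\Pi_0\le\lambda^2s/8+\log(1/\delta)$.

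On this event $V(\Delta)\le A/\lambda+\lambda s/8$, where $A:=\log(1/\delta)+(1-k)\log C$. Choosing $\lambda=\sqrt{8A/s}$, which is deterministic, gives $V(\Delta)\le\sqrt{As/2}=\varepsilon_{n_P,n_Q}(\delta)$. This is exactly \eqref{Eq:StatMargin}.

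\textbf{Step 4: measurability.} The map $(\text{sample},\theta)\mapsto\widehat\Gamma(\theta)$ is jointly measurable because $\varphi$ is jointly measurable and $\widehat\Gamma$ is a finite average of evaluations of $\varphi$. Theorem~\ref{Thm:CVAR} expresses $V(-\widehat\Gamma)$ as $-k\int\widehat\Gamma\,dm$ plus $(1-k)$ times a CVaR, which is an infimum over $\eta$. The integrands are measurable in the sample by Fubini, and the function of $\eta$ being minimized is continuous. The infimum can therefore be restricted to rational $\eta$, which gives measurability. The same Fubini argument justifies interchanging expectation and the $\Pi_0$-integral in Step 3.

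\textbf{Main obstacle.} The crux is Step 2, controlling the supremum over the infinite-dimensional admissible class uniformly. Boundedness of the density $d\Pi/d\Pi_0$ alone would only give a cost of $\log$ of a density bound. Identifying the decomposition $\Pi=km+(1-k)\rho$ with $\rho\le C\overline R$, and applying joint convexity to obtain the factor $(1-k)$, is what produces the sharp entropy cost $(1-k)\log C$.
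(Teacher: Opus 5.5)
Your proposal is correct and follows essentially the same route as the paper. Both use the base measure $\Pi_0=km+(1-k)\overline R$, the entropy bound $(1-k)\log C$ via the decomposition $\Pi=km+(1-k)\nu$ with $\nu\le C\overline R$ and joint convexity (the log-sum inequality), Hoeffding's lemma integrated against $\Pi_0$ followed by Markov's inequality, the Donsker--Varadhan entropy inequality, the same optimized choice of the exponent, and measurability via the scalar CVaR form restricted to rational $\eta$. The only detail you omit is the degenerate case $b=0$: there your choice $\lambda=\sqrt{8A/s}$ is undefined, but $\Delta\equiv0$ makes the bound trivial, as the paper notes.
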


Note that $\varepsilon_{n_P,n_Q}(\delta)$  decreases with the sample sizes at the
usual square-root rate and increases with the requested confidence level. Importantly, the statistical guarantee does not depend directly on the outcome dimension as long as the range bound $b$ is known and  
the evaluation of $\Gamma$ is feasible. When $k=0$, increasing $K$ approaches exact dominance in the corresponding integral stochastic dominance under the
continuity and full-support conditions we discuss in Appendix~\ref{App:RWSDExactLimit}. As expected, in this case the quantity  $\varepsilon_{n_P,n_Q}(\delta)$ approaches to infinity as $C$ tends to infinity in this case 
so the statistical guarantee above becomes uninformative. 

\begin{example}[Finite-sample FOSD under RWSD]
\label{example:FiniteSampleFOSDRWSD}
Consider the setting of Example~\ref{example:FOSD-RWSD}. For each lower set
$A\in\mathscr L$,
\[
-\Gamma_{\widehat P_{n_P},\widehat Q_{n_Q}}(A)
=
\widehat Q_{n_Q}(A)-\widehat P_{n_P}(A)
=
\frac1{n_Q}\sum_{j=1}^{n_Q}\mathbf 1_{\{Z_j^Q\in A\}}
-
\frac1{n_P}\sum_{i=1}^{n_P}\mathbf 1_{\{Z_i^P\in A\}}.
\]
Since $\varphi_A(x)=1-\mathbf 1_A(x)$ has range length one,
Theorem~\ref{Thm:RWSD_STATS} applies with $b=1$. Hence, if
\[
V(-\Gamma_{\widehat P_{n_P},\widehat Q_{n_Q}})
+
\varepsilon_{n_P,n_Q}(\delta)
\le0,
\]
then the corresponding RWSD dominance holds with confidence at
least $1-\delta$.

The same argument applies to any stochastic order generated by bounded test
functions; only the test functions and their range bound $b$ change.
\end{example}

\section{Empirical Illustration with Olist E-Commerce Data}
\label{Sec:OlistApplication}

We use the Brazilian E-Commerce Public Dataset by Olist to illustrate what CSD
and RWSD reveal when empirical multivariate FOSD fails
\citep{OlistBrazilianEcommerce}. We deliberately construct two groups of sellers
using their fulfillment performance before 2018 so that one group has much
better past fulfillment performance than the other. We then compare their later
service outcomes. Thus, the example is designed to create a setting in which
dominance should be particularly plausible. Even in this setting, empirical
FOSD fails because of a small unfavorable difference in one part of the
distribution. CSD and RWSD show that dominance still holds after allowing a
small relaxation. CSD also diagnoses the failure in the sense that it shows how much favorable movement is needed to
compensate for the unfavorable movement and which service measures account for
the failure. 

The data contain orders, seller assignments, delivery dates,
shipping deadlines, and customer reviews. We discuss it in details in the appendix. 
We classify sellers using fulfillment performance observed before
January 1, 2018, averaging their carrier-handoff speed and the share of handoffs
completed by the shipping deadline. We compare the two groups using service outcomes from delivered and reviewed
orders purchased between January 1 and June 30, 2018. We  reweight the orders so that  differences in product mix and purchase timing are not driving the
comparison (see Appendix \ref{App:OlistApplication} for the precise construction). Let $\widehat P$ denote the weighted empirical
distribution of service outcomes for the bottom-quartile sellers and
$\widehat Q$ the corresponding distribution for the top-quartile sellers.
These distributions contain 5,545 and 6,128 orders, respectively.
Each order has eight outcomes in $[0,1]$, with larger values always preferred.
They measure carrier-transit speed, purchase-to-delivery speed, whether delivery
is on time, whether it is no more than three days late, the severity of any
positive delay, the final review score, whether that review is at least three
stars, and whether it is five stars. Appendix~\ref{App:OlistApplication}
defines these coordinates and the order-selection rules.

\subsection{CSD Comparisons}
We consider three CSD comparisons. One of our three CSD comparisons uses four service scores that group the eight outcomes into four service measures. The particular grouping is only one way to
separate the coordinates but an analyst can use different scores when there are specific performance measures of interest.  For the four service scores we define
\begin{equation}
\label{Eq:OlistFourScores}
s_1(x)=x_1,\qquad
s_2(x)=\frac{x_1+3x_2}{4},\qquad
s_3(x)=\frac{x_3+x_4+x_5}{3},\qquad
s_4(x)=\frac{x_6+x_7+x_8}{3}
\end{equation}
with weights $1/6$, $1/3$, $1/4$, and $1/4$. The first score keeps
carrier transit separate, while the second combines carrier transit with the
customer's total delivery time and places more weight on total delivery time.
The last two scores average the three measures of delivery relative to the
promised date and the three review measures, respectively.
We also compare the eight coordinates directly, using weights $1/4$ for
$x_1,x_2$ and $1/12$ for each remaining coordinate. This is the
substitution-MASD case in Section~\ref{Sec:Utility}.

A third comparison uses the lower-tail score idea from
Section~\ref{Sec:CSDLowerOrthantScores}. We ask whether one or several of the
four service measures in \eqref{Eq:OlistFourScores} are simultaneously low.
For each of five quantile levels and each nonempty subset of the four service
measures, we define a lower-tail event in which every measure in that subset is
below its corresponding threshold. The associated score equals zero on this
event and one otherwise, so larger values remain preferred. This gives
$5(2^4-1)=75$ scores. The thresholds, weights, and full construction are given
in Appendix~\ref{App:OlistApplication}.

For each choice of scores, we solve the finite transport problem in
\eqref{Eq:FiniteScoreWassersteinLP} once. Theorem~\ref{Thm:CompensatedScoreTransport}
then combines this transport value with $\Delta_s(\widehat P,\widehat Q)$ to
compute $\mathfrak D_{s,\lambda,\gamma}(\widehat P,\widehat Q)$ for every
$\gamma$ and the smallest value $\gamma^\star$ for which CSD holds. Thus, one
optimization problem gives the full CSD comparison over all values of $\gamma$.
Empirical FOSD nevertheless fails, even though we deliberately constructed the
two seller groups so that one had much better pre-2018 fulfillment performance.
For the coordinate comparison, Corollary~\ref{Cor:CSDMASDWasserstein} requires
$W_{1,\beta}(\widehat P,\widehat Q)=
\Delta_\beta(\widehat P,\widehat Q)$, but the observed gap is
$W_{1,\beta}(\widehat P,\widehat Q)-\Delta_\beta(\widehat P,\widehat Q)
=0.00004466>0$. Panel~(a) of Figure~\ref{Fig:OlistCSDApplication} shows why:
near the carrier-transit threshold $t=0.936$, $\widehat Q$ places about
$0.00515$ more probability below the threshold than $\widehat P$. This single
lower-tail reversal is enough to rule out FOSD, even though the mean of every
service measure in \eqref{Eq:OlistFourScores} is higher under $\widehat Q$.

\begin{table}[t]
\centering
\small
\caption{CSD results for the two reweighted empirical distributions. The last
column reports the minimum expected score decrease as a percentage of the
minimum expected score increase.}
\label{Tab:OlistCSDResults}
\begin{tabular}{@{}lccc@{}}
\toprule
Scores used in the comparison
& $\Delta_s$
& $L_s^\star$
& $\gamma^\star$ (percent) \\
\midrule
Four service scores
& 0.061484 & 0.00001512 & 0.02458\% \\
Eight coordinates (substitution-MASD)
& 0.061484 & 0.00002233 & 0.03631\% \\
Seventy-five lower-tail indicators
& 0.047662 & 0.00003680 & 0.07716\% \\
\bottomrule
\end{tabular}
\end{table}

Table~\ref{Tab:OlistCSDResults} shows that all three comparisons fail at
$\gamma=0$, but as expected the required relaxation is small. For the four service scores,
$\gamma^\star=0.00024579$. Under an optimal coupling, the minimum expected
score decrease is therefore only $0.0246\%$ of the minimum expected score
increase.  Hence, whereas an FOSD check reports only failure, CSD measures its magnitude. In addition, Panel~(c) of Figure~\ref{Fig:OlistCSDApplication} explains where the
compensation comes from for one optimal coupling. In particular,  the failure of FOSD is driven almost entirely by worse
carrier-transit outcomes for the higher-performing seller group, while the other
service measures provide enough improvement to compensate for this loss under
CSD. 

\begin{figure}[!t]
\centering
\includegraphics[width=0.90\textwidth]{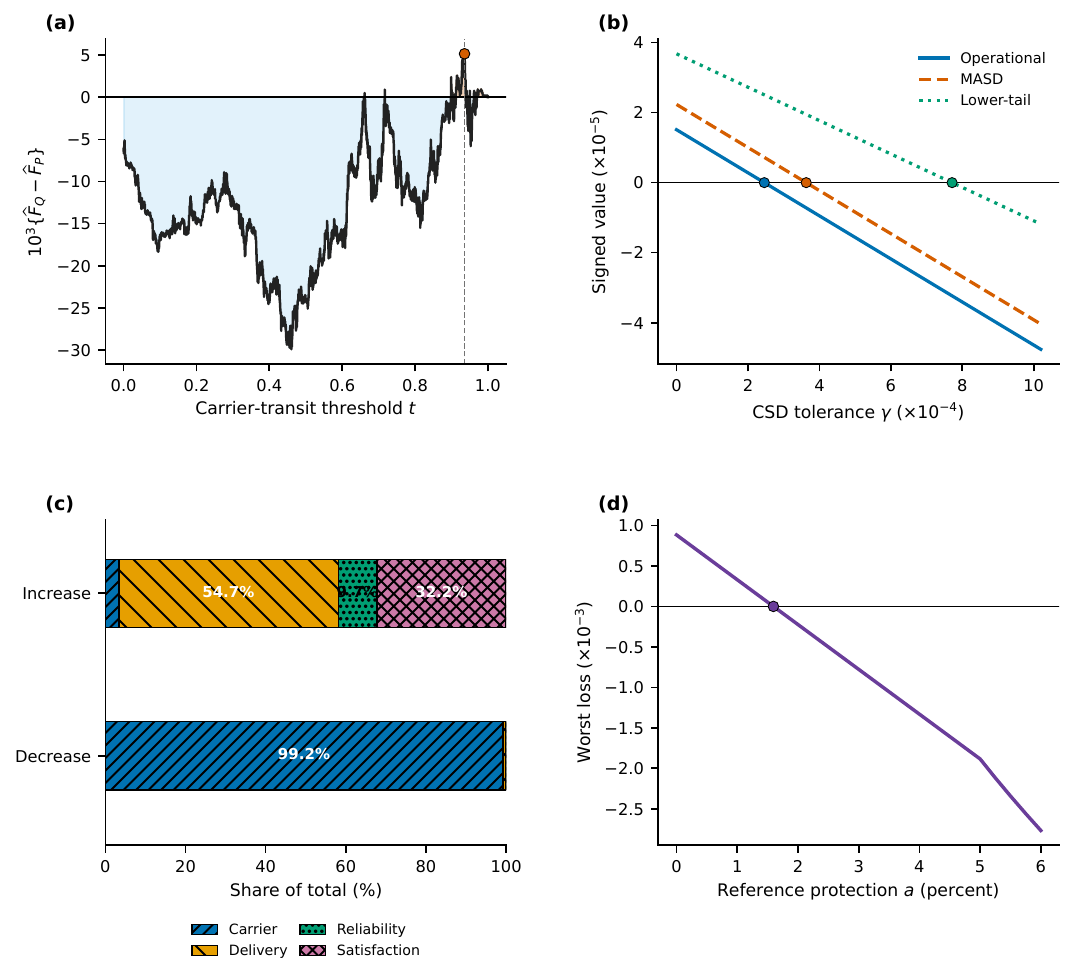}
\caption{Panel (a) shows the carrier-transit threshold at which FOSD
fails. Panel (b) plots the CSD value against $\gamma$ for the three choices of
scores; each marked zero is the smallest value for which CSD holds, computed
from one transport solve. Panel (c) shows which of the four service scores
supply the expected increases and decreases under one optimal coupling. Panel
(d) shows the RWSD calculation below: smaller $a$ allows more weight on an
unfavorable lower-tail event, and the marked zero is the smallest value of
$a$ for which the comparison remains favorable.}
\label{Fig:OlistCSDApplication}
\end{figure}

\subsection{Lower orthant RWSD}

We next apply RWSD to the same 75 lower-tail events used in the third CSD
comparison, following the construction in Example~\ref{example:LowerOrthant-RWSD}. Some of these events are redundant because their pre-2018
thresholds equal one. After removing events that are always satisfied and
combining duplicate events, we obtain 23 distinct lower-tail events
$L_1,\ldots,L_{23}$ with reference weights $w_1,\ldots,w_{23}$.
Appendix~\ref{App:OlistApplication} gives the details.

We set both RWSD reference measures equal to the distribution that assigns
weight $w_j$ to event $L_j$, that is,
$m=M=\sum_{j=1}^{23} w_j\delta_j$.
 For $0<a<1$, take
$k=a$ and $K=1/a$ so the admissible measures satisfy $\sum_{j=1}^{23}\pi_j=1$ and
$aw_j\le\pi_j\le w_j/a$ for $j=1,\ldots,23$. As $a$ decreases, 
more weight can be placed on the worst lower-tail event. Let
\[
V_a
:=
\max_{\substack{\pi_j\ge0,\ \sum_{j=1}^{23}\pi_j=1\\
                   aw_j\le\pi_j\le w_j/a}}
\sum_{j=1}^{23}\pi_j
\{\widehat Q(L_j)-\widehat P(L_j)\},
\qquad
 a^\star:=\inf\{a\in(0,1]:V_a\le0\}.
\]
We can obtain $V_a$ by sorting  as in \eqref{Eq:FiniteWorstLoss} (this is also the CVaR calculation in Theorem~\ref{Thm:CVAR}).

Only one of the 23 lower-tail events is unfavorable: $\widehat Q$ places
$0.000883$ more probability than $\widehat P$ on an event involving carrier
transit. RWSD begins to hold at $a^\star=0.015956$, where the worst allowed
weighting places $98.48\%$ of its mass on this single event. Thus, $\widehat Q$
dominates $\widehat P$ in RWSD unless almost all weight is allowed to
concentrate on this one carrier-transit event.

Note that the values $a^\star$ and $\gamma^\star$ cannot be compared directly because
they measure different relaxations. The value $\gamma^\star$ measures how much
expected score decrease can be compensated by expected score increase within a
coupling, whereas $a^\star$ measures how much weight RWSD can allow on the
most unfavorable lower-tail events before dominance fails.

\section{Conclusions}
\label{Sec:Conclusions}

We develop two relaxations of multivariate stochastic dominance based on its
two standard representations. CSD relaxes the coupling formulation, while RWSD
relaxes the integral formulation of stochastic orders. We characterize the
utility class generated by CSD and in some cases by RWSD, recovering known almost stochastic dominance
orders from the literature as special cases. Importantly, both
relaxations can be computed for empirical distributions using optimal transport
or CVaR and provide finite-sample guarantees that can, in principle, be
computed even when the outcome dimension is high.

We provide an empirical application using Olist data. Even after deliberately
constructing two seller groups so that one has much better past fulfillment
performance, empirical FOSD fails because of a single unfavorable coordinate.
CSD and RWSD show that this failure is small and quantify how much compensation
or concentration on that unfavorable event is needed before the dominance
comparison changes.

We believe these approaches can be useful for comparing general multivariate
distributions when exact stochastic dominance is too restrictive. An interesting
direction for future work is to develop more data-driven methods for choosing
the scores and reference weights. Another direction is to use CSD and RWSD as
constraints or objectives in stochastic optimization problems.

\clearpage
\appendix

\section{Proofs and Technical Details for Compensated Stochastic Dominance}
\subsection{Regularity and transport after mapping outcomes to scores}
\label{App:CSDPreliminaries}

\begin{lemma}[Score regularity]
\label{Lem:ScoreRegularity}
The functions in \eqref{Eq:AggregateScore}--\eqref{Eq:ScoreDecreaseCost}
are finite and continuous. Moreover,
\begin{equation}
\label{Eq:ScoreRegularityBounds}
|S(x)-S(y)|\le d_s(x,y),
\qquad
0\le\ell_s(x,y)\le d_s(x,y)
\le\overline L_s\|x-y\|_1.
\end{equation}
\end{lemma}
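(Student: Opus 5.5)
The plan is to work pointwise in $\theta$ and then integrate against $\lambda$, starting with finiteness. For fixed $x$, the assumption $\int_\Theta|s_\theta(x)-s_\theta(0)|\,\lambda(d\theta)\le\overline L_s\|x\|_1$ together with $\int_\Theta|s_\theta(0)|\,\lambda(d\theta)<\infty$ and the triangle inequality gives $\int_\Theta|s_\theta(x)|\,\lambda(d\theta)\le\int_\Theta|s_\theta(0)|\,d\lambda+\overline L_s\|x\|_1<\infty$. This shows that $\theta\mapsto s_\theta(x)$ is $\lambda$-integrable, so $S(x)$ is finite. Joint measurability of $s$ makes each integrand measurable in $\theta$. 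The integrands of $d_s$ and $\ell_s$ are dominated by $|s_\theta(x)|+|s_\theta(y)|$, so $d_s(x,y)$ and $\ell_s(x,y)$ are finite as well.

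Next come the inequalities in \eqref{Eq:ScoreRegularityBounds}, each obtained by integrating a pointwise inequality. First, $|S(x)-S(y)|=\bigl|\int_\Theta(s_\theta(x)-s_\theta(y))\,d\lambda\bigr|\le\int_\Theta|s_\theta(x)-s_\theta(y)|\,d\lambda=d_s(x,y)$. Second, for every real $t$ we have $0\le t_+\le|t|$, so $0\le\ell_s(x,y)\le d_s(x,y)$. Third, $d_s(x,y)\le\overline L_s\|x-y\|_1$ is exactly the standing Lipschitz-type assumption.

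Continuity then follows from these bounds. For $S$, $|S(x)-S(x')|\le\overline L_s\|x-x'\|_1$, so $S$ is Lipschitz. For $d_s$, the reverse triangle inequality in $\theta$ gives
$\bigl||s_\theta(x)-s_\theta(y)|-|s_\theta(x')-s_\theta(y')|\bigr|\le|s_\theta(x)-s_\theta(x')|+|s_\theta(y)-s_\theta(y')|$. Integrating yields $|d_s(x,y)-d_s(x',y')|\le\overline L_s(\|x-x'\|_1+\|y-y'\|_1)$, so $d_s$ is jointly Lipschitz. For $\ell_s$, the map $t\mapsto t_+$ is $1$-Lipschitz, which gives the same pointwise bound and hence the same Lipschitz estimate.

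No step is a real obstacle. The only care needed is checking that the integrals are well defined, which is why integrability of $\theta\mapsto s_\theta(x)$ is established first, before any difference is integrated.
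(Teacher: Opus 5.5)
Your proposal is correct and follows the paper's proof essentially step for step: finiteness via the triangle inequality against $s_\theta(0)$, the bounds from $0\le t_+\le|t|$ and the standing Lipschitz assumption, and continuity via the reverse triangle inequality and the $1$-Lipschitz property of $t\mapsto t_+$. Your explicit remarks on measurability of the $\theta$-sections and on the domination of the integrands of $d_s$ and $\ell_s$ make precise points that the paper leaves implicit.
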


\begin{proof}
The assumptions on the weighted score differences give
\[
\begin{aligned}
\int_\Theta|s_\theta(x)|\,\lambda(d\theta)
&\le
\int_\Theta|s_\theta(0)|\,\lambda(d\theta)
+
\int_\Theta|s_\theta(x)-s_\theta(0)|\,\lambda(d\theta)\\
&\le
\int_\Theta|s_\theta(0)|\,\lambda(d\theta)
+\overline L_s\|x\|_1<\infty.
\end{aligned}
\]
Hence $S$, $d_s$, and $\ell_s$ are finite. The last inequality in
\eqref{Eq:ScoreRegularityBounds} is exactly the assumed bound
$d_s(x,y)\le\overline L_s\|x-y\|_1$. The other inequalities in
\eqref{Eq:ScoreRegularityBounds} follow from the triangle inequality and
$0\le t_+\le|t|$. The reverse triangle inequality and the
$1$-Lipschitz property of $t\mapsto t_+$ give
\[
\begin{aligned}
|d_s(x,y)-d_s(x',y')|
&\le d_s(x,x')+d_s(y,y')\\
&\le
\overline L_s\{\|x-x'\|_1+\|y-y'\|_1\},\\
|\ell_s(x,y)-\ell_s(x',y')|
&\le d_s(x,x')+d_s(y,y')\\
&\le
\overline L_s\{\|x-x'\|_1+\|y-y'\|_1\}.
\end{aligned}
\]
The bound for $S$ in \eqref{Eq:ScoreRegularityBounds} proves its continuity,
and the two displayed bounds above prove continuity of $d_s$ and $\ell_s$.
\end{proof}

To compare the laws directly in score space, define
$T_s:\mathbb R^d\to L^1(\lambda)$ by
$T_s(x)(\theta):=s_\theta(x)$. Let $E$ be the closure of
$T_s(\mathbb R^d)$ in $L^1(\lambda)$, set
$\mu:=(T_s)_\#P$ and $\nu:=(T_s)_\#Q$, and write
$(T_s,T_s)(x,y):=(T_s(x),T_s(y))$.

\begin{lemma}[Couplings after mapping outcomes to scores]
\label{Lem:ScoreProfileReduction}
The space $E$ is Polish and
\begin{equation}
\label{Eq:ScoreProfileCouplingReduction}
\mathcal C(\mu,\nu)
=
\left\{
(T_s,T_s)_\#\kappa:
\kappa\in\mathcal C(P,Q)
\right\}.
\end{equation}
Consequently,
\begin{equation}
\label{Eq:ScoreProfileWassersteinIdentity}
W_s(P,Q)=W_1(\mu,\nu),
\end{equation}
where the right-hand side uses the $L^1(\lambda)$ distance on $E$.
In particular, $W_s$ is a pseudometric and satisfies the triangle inequality.
\end{lemma}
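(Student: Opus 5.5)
The proof has three parts: show that $E$ is Polish, identify the score-space couplings with pushforwards of couplings of $P$ and $Q$, and read off the Wasserstein identity.

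\emph{$E$ is Polish.} Since $\lambda$ is only a finite measure on an arbitrary measurable space, $L^1(\lambda)$ need not be separable, so I will not argue through $L^1(\lambda)$ itself. Instead I will use that $T_s$ is continuous: by the standing assumption and Lemma~\ref{Lem:ScoreRegularity},
\[
\|T_s(x)-T_s(y)\|_{L^1(\lambda)}=d_s(x,y)\le\overline L_s\|x-y\|_1 .
\]
Hence $T_s(\mathbb Q^d)$ is dense in $T_s(\mathbb R^d)$, and therefore in its closure $E$. A closed subset of the complete space $L^1(\lambda)$ is complete, so $E$ is complete and separable, i.e.\ Polish. The same continuity shows that $T_s$ is Borel measurable into $E$, so $\mu$ and $\nu$ are Borel probability measures on $E$.

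\emph{The inclusion ``$\supseteq$'' in \eqref{Eq:ScoreProfileCouplingReduction}.} If $\kappa\in\mathcal C(P,Q)$, then $(T_s,T_s)_\#\kappa$ has marginals $(T_s)_\#P=\mu$ and $(T_s)_\#Q=\nu$, so it belongs to $\mathcal C(\mu,\nu)$.

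\emph{The inclusion ``$\subseteq$''; this is the main step.} Let $\pi\in\mathcal C(\mu,\nu)$. I will lift $\pi$ by gluing via disintegration.
\begin{enumerate}
\item Since $\mathbb R^d$ and $E$ are Polish, disintegrate $P$ along $T_s$: there is a kernel $P(\cdot\mid e)$ with $P=\int P(\cdot\mid e)\,\mu(de)$ and $P(\cdot\mid e)$ concentrated on $T_s^{-1}(e)$ for $\mu$-a.e.\ $e$.
\item Obtain the kernel $Q(\cdot\mid f)$ from $Q$ and $\nu$ in the same way.
\item Define
\[
\kappa(dx,dy):=\int_{E\times E}P(dx\mid e)\,Q(dy\mid f)\,\pi(de,df).
\]
\end{enumerate}
The first marginal of $\kappa$ is $\int P(\cdot\mid e)\,\mu(de)=P$, and the second is $Q$ by the same computation. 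Because $T_s(x)=e$ for $P(\cdot\mid e)$-a.e.\ $x$, and likewise on the $Q$ side, the pushforward $(T_s,T_s)_\#\kappa$ equals $\pi$. The only care needed is to check the measurability of the kernels and the $\mu$-a.e.\ concentration property; the standard disintegration theorem on Polish spaces supplies both, since $T_s$ is Borel.

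\emph{The Wasserstein identity and consequences.} For every $\kappa\in\mathcal C(P,Q)$, change of variables gives
\[
\int d_s(x,y)\,\kappa(dx,dy)=\int\|e-f\|_{L^1(\lambda)}\,\bigl((T_s,T_s)_\#\kappa\bigr)(de,df).
\]
Taking infima and using the equality of coupling sets \eqref{Eq:ScoreProfileCouplingReduction} yields \eqref{Eq:ScoreProfileWassersteinIdentity}. Finiteness of $W_1(\mu,\nu)$ follows from the first-moment bound in Lemma~\ref{Lem:ScoreRegularity}. Since $W_1$ is a metric on the first-moment probability measures of the Polish space $E$, the map $(P,Q)\mapsto W_1\bigl((T_s)_\#P,(T_s)_\#Q\bigr)$ is a pseudometric; in particular, it satisfies the triangle inequality.
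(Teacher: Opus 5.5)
Your proposal is correct and follows the paper's structure. $E$ is Polish because $T_s$ is Lipschitz, $\mathcal C(\mu,\nu)$ is identified with the pushforwards of $\mathcal C(P,Q)$, and the Wasserstein identity follows by change of variables using $d_s(x,y)=\|T_s(x)-T_s(y)\|_{L^1(\lambda)}$. The only difference is that the paper obtains the coupling and Wasserstein identities by citing Lemma~51 of Bauer, M\'emoli, Needham, and Nishino (2025), whereas you prove the nontrivial inclusion directly by disintegrating $P$ and $Q$ along $T_s$ and gluing along $\pi$, which makes your argument self-contained.
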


\begin{proof} Lemma~\ref{Lem:ScoreRegularity} implies that $T_s:\mathbb R^d\to L^1(\lambda)$ is Lipschitz. Thus $T_s(\mathbb R^d)$ is separable. Its closure $E$ is separable and, as a closed subset of the Banach space $L^1(\lambda)$, complete; hence $E$ is Polish. Moreover, by \eqref{Eq:ScoreRegularityBounds}, \[ \int_E \|z-T_s(0)\|_{L^1(\lambda)}\,\mu(dz) = \int_D d_s(x,0)\,P(dx) \le \overline L_s\int_D\|x\|_1\,P(dx) <\infty, \] and analogously for $\nu$. Thus $\mu$ and $\nu$ have finite first moments on $E$. Apply Lemma~51 in \citet{BauerMemoliNeedhamNishino2025} with $X=Y=\mathbb R^d$, $Z=E$, $f=g=T_s$, $\mu_X=P$, $\mu_Y=Q$, and $p=1$. It gives \eqref{Eq:ScoreProfileCouplingReduction}. Moreover, \[ d_s(x,y) = \|T_s(x)-T_s(y)\|_{L^1(\lambda)}. \] The Wasserstein identity in the same lemma therefore gives \eqref{Eq:ScoreProfileWassersteinIdentity}. The remaining claims follow from the corresponding properties of $W_1$ on $E$.
\end{proof}

\subsection{Optimal transport characterization and order properties}
\label{App:CSDTransportProofs}

\begin{proof}[Proof of Theorem~\ref{Thm:CompensatedScoreTransport}]
By Lemma~\ref{Lem:ScoreRegularity}, $d_s$, $\ell_s(x,y)$, and
$\ell_s(y,x)$ are nonnegative, continuous, and integrable under every
coupling. Regard the couplings as
Borel probability measures on $\mathbb R^d\times\mathbb R^d$. The set
$\mathcal C(P,Q)$ is tight and weakly closed, hence weakly compact by
Prokhorov's theorem, and every element is concentrated on $D\times D$.
The integrals of $d_s(x,y)$, $\ell_s(x,y)$, and $\ell_s(y,x)$ are
weakly lower semicontinuous. Hence the minima in
\eqref{Eq:ScoreWassersteinValue} and \eqref{Eq:MinimumScoreDecrease} are
attained, and the same argument applies to the expected score-increase cost;
see Theorem~4.1 in \citet{Villani2009OptimalTransport}.

Fix $\kappa\in\mathcal C(P,Q)$ and, within this proof, write
\[
L_\kappa:=\int_{D\times D}\ell_s(x,y)\,\kappa(dx,dy),
\qquad
G_\kappa:=\int_{D\times D}\ell_s(y,x)\,\kappa(dx,dy).
\]
The identities
$t_++(-t)_+=|t|$ and $(-t)_+-t_+=-t$, followed by Fubini--Tonelli, give
\begin{equation}
\label{Eq:CSDGainLossDecomposition}
L_\kappa+G_\kappa
=
\int_{D\times D}d_s(x,y)\,\kappa(dx,dy),
\qquad
G_\kappa-L_\kappa
=
\Delta_s(P,Q).
\end{equation}
The second equality depends only on the marginals. Set
$B_\kappa:=L_\kappa+G_\kappa$. Solving
\eqref{Eq:CSDGainLossDecomposition} gives
\[
L_\kappa
=
\frac{B_\kappa-\Delta_s(P,Q)}{2},
\qquad
G_\kappa
=
\frac{B_\kappa+\Delta_s(P,Q)}{2}.
\]
Hence minimizing $B_\kappa$ is equivalent to minimizing either
$L_\kappa$ or $G_\kappa$. Taking minima  shows that the minimum expected
score increase is $\{W_s(P,Q)+\Delta_s(P,Q)\}/2$. Moreover,
\[
L_\kappa-\gamma G_\kappa
=
\frac{1-\gamma}{2}B_\kappa
-
\frac{1+\gamma}{2}\Delta_s(P,Q).
\]
Taking the infimum over $\kappa$ proves
\eqref{Eq:CSDMeanWassersteinIdentity}. If $\gamma<1$, the coefficient of
$B_\kappa$ is positive, so the minimizing couplings are exactly those that attain $W_s(P,Q)$. If $\gamma=1$, the expression equals
$-\Delta_s(P,Q)$ for every coupling. This also proves attainment of the
signed problem.
\end{proof}

\begin{proof}[Proof of Example~\ref{example:CSDTranslationShift}]
For the coordinate scores and counting measure,
\eqref{Eq:AggregateScore} and \eqref{Eq:TotalScoreDifferenceCost} give
\[
S(x)=\sum_{i=1}^d x_i,
\qquad
d_s(x,y)=\sum_{i=1}^d|x_i-y_i|.
\]
The coupling $(X,X+\delta)$ belongs to
$\mathcal C(P,Q_\delta)$ and has expected total coordinate difference
$\sum_{i=1}^d|\delta_i|$. Therefore,
\[
W_s(P,Q_\delta)
\le
\sum_{i=1}^d|\delta_i|.
\]

For any $\kappa\in\mathcal C(P,Q_\delta)$, the triangle inequality and the
fixed marginals imply
\[
\begin{aligned}
\int_{D\times D}d_s(x,y)\,\kappa(dx,dy)
&=
\sum_{i=1}^d
\int_{D\times D}|y_i-x_i|\,\kappa(dx,dy)\\
&\ge
\sum_{i=1}^d
\left|
\int_{D\times D}(y_i-x_i)\,\kappa(dx,dy)
\right|\\
&=
\sum_{i=1}^d|\delta_i|.
\end{aligned}
\]
Taking the infimum over $\kappa$ yields
\[
W_s(P,Q_\delta)
=
\sum_{i=1}^d|\delta_i|.
\]
Moreover, \eqref{Eq:AggregateScoreDifference} gives
\[
\Delta_s(P,Q_\delta)
=
\sum_{i=1}^d\delta_i.
\]

Within this proof, set
\[
A:=\sum_{i=1}^d(\delta_i)_+,
\qquad
B:=\sum_{i=1}^d(-\delta_i)_+.
\]
The preceding identities give
\[
W_s(P,Q_\delta)=A+B,
\qquad
\Delta_s(P,Q_\delta)=A-B.
\]
Using \eqref{Eq:CSDWassersteinCriterion},
\[
P\preceq_{s,\lambda,\gamma}^{\rm CSD}Q_\delta
\quad\Longleftrightarrow\quad
A-B
\ge
\frac{1-\gamma}{1+\gamma}(A+B).
\]
Multiplying by $1+\gamma$ and collecting terms shows that the last inequality
is equivalent to $B\le\gamma A$, which is precisely
\eqref{Eq:CSDTranslationCriterion}.
\end{proof}

\begin{proof}[Proof of Proposition~\ref{Prop:CSDOrderStructure}]
Set $c_\gamma:=(1-\gamma)/(1+\gamma)$. Reflexivity follows from
$W_s(P,P)=\Delta_s(P,P)=0$. Let $R$ be another Borel probability measure
supported on $D$ with a finite first moment. Equation
\eqref{Eq:ScoreProfileWassersteinIdentity} and the triangle inequality for
$W_1$ give
\[
W_s(P,R)\le W_s(P,Q)+W_s(Q,R).
\]
If $P\preceq_{s,\lambda,\gamma}^{\rm CSD}Q$ and
$Q\preceq_{s,\lambda,\gamma}^{\rm CSD}R$, then
\[
\Delta_s(P,R)
=
\Delta_s(P,Q)+\Delta_s(Q,R)
\ge
c_\gamma\{W_s(P,Q)+W_s(Q,R)\}
\ge
c_\gamma W_s(P,R).
\]
Corollary~\ref{Cor:CSDWassersteinCriterion} gives transitivity.

For $\alpha\in[0,1]$, mixing couplings gives
\[
W_s\{\alpha P_1+(1-\alpha)P_2,
      \alpha Q_1+(1-\alpha)Q_2\}
\le
\alpha W_s(P_1,Q_1)+(1-\alpha)W_s(P_2,Q_2).
\]
If $P_i\preceq_{s,\lambda,\gamma}^{\rm CSD}Q_i$ for $i=1,2$, affinity of
$\Delta_s$ and \eqref{Eq:CSDWassersteinCriterion} imply
\[
\begin{aligned}
&\Delta_s\{\alpha P_1+(1-\alpha)P_2,
             \alpha Q_1+(1-\alpha)Q_2\}\\
&\quad\ge
c_\gamma\{\alpha W_s(P_1,Q_1)+(1-\alpha)W_s(P_2,Q_2)\}\\
&\quad\ge
c_\gamma W_s\{\alpha P_1+(1-\alpha)P_2,
                 \alpha Q_1+(1-\alpha)Q_2\}.
\end{aligned}
\]
Thus the mixture property in Proposition~\ref{Prop:CSDOrderStructure} holds. Since $c_\gamma$ is nonincreasing,
\eqref{Eq:CSDWassersteinCriterion} also gives nesting in $\gamma$.

Suppose $\gamma<1$. Applying
\eqref{Eq:CSDWassersteinCriterion} in both directions gives
\[
\Delta_s(P,Q)\ge c_\gamma W_s(P,Q),
\qquad
-\Delta_s(P,Q)\ge c_\gamma W_s(P,Q),
\]
so $W_s(P,Q)=0$. Conversely, $|\Delta_s(P,Q)|\le W_s(P,Q)$, so $W_s(P,Q)=0$ gives
$\Delta_s(P,Q)=0$, and
\eqref{Eq:CSDWassersteinCriterion} then holds in both directions.
\end{proof}

\begin{proof}[Proof of Corollary~\ref{Cor:CSDCriticalGamma}]
Solving \eqref{Eq:CSDWassersteinCriterion} for $\gamma$ gives
\eqref{Eq:CSDCriticalGamma}. Now
suppose $W_s(P,Q)>0$ and $\Delta_s(P,Q)\ge0$. For
$\kappa\in\mathcal C(P,Q)$, define
\[
L_\kappa:=\int_{D\times D}\ell_s(x,y)\,\kappa(dx,dy),
\qquad
G_\kappa:=\int_{D\times D}\ell_s(y,x)\,\kappa(dx,dy),
\qquad
B_\kappa:=L_\kappa+G_\kappa.
\]
Then
$B_\kappa\ge W_s(P,Q)>0$, so $G_\kappa>0$. By
\eqref{Eq:CSDGainLossDecomposition},
\[
\frac{L_\kappa}{G_\kappa}
=
\frac{B_\kappa-\Delta_s(P,Q)}
     {B_\kappa+\Delta_s(P,Q)}.
\]
For $B>0$, the derivative of
$\{B-\Delta_s(P,Q)\}/\{B+\Delta_s(P,Q)\}$ is
$2\Delta_s(P,Q)/\{B+\Delta_s(P,Q)\}^2\ge0$. Hence every coupling attaining
$W_s(P,Q)$ minimizes the ratio, and its value is the finite expression in
\eqref{Eq:CSDCriticalGamma}.
\end{proof}

\paragraph{Cost construction for finite linear score families.}
If $\lambda=\sum_{j=1}^J\lambda_j\delta_{a^j}$ and
$s_{a^j}(x)=a^j\cdot x$, precomputing the score values and constructing the
cost matrix requires
$O\{Jd(n_P+n_Q)+J n_P n_Q\}$ arithmetic operations. For coordinate scores,
constructing the cost matrix requires $O(n_P n_Q d)$. For a fixed $\gamma$, the
signed coordinate cost is
$\sum_{i=1}^d\beta_i\{(x_i-y_i)_+-\gamma(y_i-x_i)_+\}$.

\subsection{Utility representation, linear scores, and finitely many scores}
\label{App:CSDUtilityProofs}
For the rest of the appendix we define $$g_{s,\lambda,\gamma}(x,y):=\ell_s(x,y)-\gamma\ell_s(y,x).$$
\begin{proof}[Proof of Theorem~\ref{Thm:CSDUtilityRepresentation}]
The identity
$t_+-\gamma(-t)_+=\{(1+\gamma)t+(1-\gamma)|t|\}/2$ gives
\begin{equation}
\label{Eq:CSDCostDecomposition}
g_{s,\lambda,\gamma}(x,y)
=
\frac{1+\gamma}{2}\{S(x)-S(y)\}
+
\frac{1-\gamma}{2}d_s(x,y).
\end{equation}
Let
\[
\mathcal F_s
:=
\left\{
f:\mathbb R^d\to\mathbb R:
|f(x)-f(y)|\le d_s(x,y)
\text{ for all }x,y
\right\}.
\]
For $\gamma<1$, applying the increment restriction in both directions and
using \eqref{Eq:CSDCostDecomposition} shows that
$u\in\mathcal U_{s,\lambda,\gamma}$ if and only if
\eqref{Eq:CSDUtilityClassWasserstein} holds for some $c\in\mathbb R$ and
$f\in\mathcal F_s$. For $\gamma=1$,
\eqref{Eq:CSDCostDecomposition} and the increment restriction applied in
both directions give $u=c+S$.

Let $E$, $\mu$, and $\nu$ be as in
Lemma~\ref{Lem:ScoreProfileReduction}. If $f\in\mathcal F_s$ and
$T_s(x)=T_s(y)$, then $f(x)=f(y)$. Hence $f$ induces a $1$-Lipschitz function
on $T_s(\mathbb R^d)$ and a unique $1$-Lipschitz extension to $E$.
Conversely, every $1$-Lipschitz function on $E$ pulls back through $T_s$ to an
element of $\mathcal F_s$. Theorem~5.10 and Remark~6.5 in
\citet{Villani2009OptimalTransport}, together with
\eqref{Eq:ScoreProfileWassersteinIdentity}, give
\[
W_s(P,Q)
=
\sup_{f\in\mathcal F_s}
\left\{
\int_D f\,dP-
\int_D f\,dQ
\right\}.
\]

For $\gamma<1$, substituting \eqref{Eq:CSDUtilityClassWasserstein} into the
right-hand side of \eqref{Eq:CSDUtilityDuality} gives
\[
\frac{1-\gamma}{2}W_s(P,Q)
-
\frac{1+\gamma}{2}\Delta_s(P,Q),
\]
which equals $\mathfrak D_{s,\lambda,\gamma}(P,Q)$ by
\eqref{Eq:CSDMeanWassersteinIdentity}. For $\gamma=1$, the equality follows
from $\mathcal U_{s,\lambda,1}=\{c+S:c\in\mathbb R\}$. The order equivalence
then follows because CSD holds exactly when the signed transport value is
nonpositive.
\end{proof}

\begin{proof}[Proof of Proposition~\ref{Prop:CSDLinearScoresMASD}]
The order interval
$\{r\in L^\infty(\lambda):\gamma\le r\le1\}$ is weak-* compact and convex.
The map $r\mapsto\int r(a)a\,\lambda(da)$ is weak-* continuous, so
$\mathcal B_{\lambda,\gamma}$ is compact and convex. Its support function
satisfies, for every $v\in\mathbb R^d$,
\[
\begin{aligned}
\sigma_{\mathcal B_{\lambda,\gamma}}(v)
&:=
\sup_{b\in\mathcal B_{\lambda,\gamma}}b\cdot v\\
&=
\int_{\mathbb R_+^d}
\left\{
(a\cdot v)_+-\gamma(-a\cdot v)_+
\right\}\lambda(da).
\end{aligned}
\]
The pointwise maximizer takes $r(a)=1$ when $a\cdot v\ge0$ and
$r(a)=\gamma$ otherwise. Consequently,
\begin{equation}
\label{Eq:LinearScoreCostSupportFunction}
g_{s,\lambda,\gamma}(x,y)
=
\sigma_{\mathcal B_{\lambda,\gamma}}(x-y).
\end{equation}

If $u$ is differentiable and $\nabla u(x)\in\mathcal B_{\lambda,\gamma}$
for every $x$, apply the one-dimensional mean-value theorem to
$h(t):=u(y+t(x-y))$ on $[0,1]$. For some $t_0\in(0,1)$,
\[
u(x)-u(y)
=\nabla u(y+t_0(x-y))\cdot(x-y)
\le\sigma_{\mathcal B_{\lambda,\gamma}}(x-y).
\]
Thus $u\in\mathcal U_{s,\lambda,\gamma}$.

Conversely, let $u\in\mathcal U_{s,\lambda,\gamma}$ and set
$A_\lambda:=\int_{\mathbb R_+^d}\|a\|_1\,\lambda(da)$. From
\eqref{Eq:LinearScoreCostSupportFunction},
\[
\sigma_{\mathcal B_{\lambda,\gamma}}(v)
\le\int|a\cdot v|\,\lambda(da)
\le A_\lambda\|v\|_\infty.
\]
Applying the increment restriction in both directions gives
$|u(x)-u(y)|\le A_\lambda\|x-y\|_\infty$, so $u$ is globally Lipschitz.
Let $u_\varepsilon$ be a standard mollification.
Translation invariance of the right-hand side of
\eqref{Eq:LinearScoreCostSupportFunction} implies
\[
u_\varepsilon(x)-u_\varepsilon(y)
\le
\sigma_{\mathcal B_{\lambda,\gamma}}(x-y).
\]
For $t>0$, positive homogeneity of the support function gives
\[
\frac{u_\varepsilon(x+t v)-u_\varepsilon(x)}{t}
\le
\sigma_{\mathcal B_{\lambda,\gamma}}(v).
\]
Letting $t\downarrow0$ yields
$\nabla u_\varepsilon(x)\cdot v
\le\sigma_{\mathcal B_{\lambda,\gamma}}(v)$ for every $v$. The
support-function characterization of a nonempty closed convex set therefore
yields
$\nabla u_\varepsilon(x)\in\mathcal B_{\lambda,\gamma}$ for every $x$.
Thus $u_\varepsilon$ satisfies the differentiability and gradient conditions
in Proposition~\ref{Prop:CSDLinearScoresMASD}.

Let $L_u$ be a global Lipschitz constant for $u$, and write the mollification
as
$u_\varepsilon(x)=\int_{\mathbb R^d}u(x-\varepsilon z)\rho(z)\,dz$
for a compactly supported standard mollifier $\rho$. Then
\[
\|u_\varepsilon-u\|_\infty
\le
L_u\varepsilon
\int_{\mathbb R^d}\|z\|_1\rho(z)\,dz
\longrightarrow0.
\]
Thus, if the expected-utility inequality holds for every differentiable
utility satisfying the gradient condition in the proposition, it also holds
for any $u\in\mathcal U_{s,\lambda,\gamma}$ by applying it to
$u_\varepsilon$ and passing to the limit. The reverse implication follows from
the first part of the proof, which shows that every such differentiable
utility belongs to $\mathcal U_{s,\lambda,\gamma}$. Hence the two utility
conditions induce the same stochastic order, and
Theorem~\ref{Thm:CSDUtilityRepresentation} completes the proof.
\end{proof}

\begin{proof}[Proof of Corollary~\ref{Cor:CSDMASDWasserstein}]
Direct calculation gives
\[
\mathcal B_{\lambda_\beta,\gamma}
=\prod_{i=1}^d[\gamma\beta_i,\beta_i],
\qquad
S(x)=\sum_i\beta_i x_i,
\qquad
d_s(x,y)=\sum_i\beta_i|x_i-y_i|.
\]
Proposition~\ref{Prop:CSDLinearScoresMASD} and
Corollary~\ref{Cor:CSDWassersteinCriterion} therefore give
\eqref{Eq:MASDWassersteinCharacterization}.

It remains to prove \eqref{Eq:FOSDWeightedWassersteinCharacterization}. If
$Q$ dominates $P$ by FOSD, Strassen's theorem provides a coupling with $x\le y$
almost surely. Under that coupling,
$\sum_i\beta_i|x_i-y_i|=\sum_i\beta_i(y_i-x_i)$, so
$W_{1,\beta}(P,Q)\le\Delta_\beta(P,Q)$. For every coupling, the
expected absolute weighted difference is at least the absolute weighted mean
difference, which gives the reverse inequality.

Conversely, suppose equality holds and take a coupling attaining
$W_{1,\beta}(P,Q)$. Its expected weighted
score-decrease term is zero. Since every $\beta_i$ is positive,
$(x_i-y_i)_+=0$ almost surely for every $i$. Thus $x\le y$ almost surely,
and Strassen's theorem gives FOSD; see \citet{Strassen1965}.
\end{proof}

\begin{proof}[Proof of Example~\ref{example:MASDGaussianMeanDispersion}]
For $\beta=(1,1)$, we have 
$
\Delta_\beta(P,Q)=m.
$ 

We next compute $W_{1,\beta}(P,Q)$. Fix
$\kappa\in\mathcal C(P,Q)$. For the first coordinate,
\[
\begin{aligned}
\int_{\mathbb R^2\times\mathbb R^2}
|x_1-y_1|\,\kappa(dx,dy)
&\ge
\left|
\int_{\mathbb R^2}y_1\,Q(dy)
-
\int_{\mathbb R^2}x_1\,P(dx)
\right| = m.
\end{aligned}
\]
For the second coordinate, the reverse triangle inequality gives
\[
\begin{aligned}
\int_{\mathbb R^2\times\mathbb R^2}
|x_2-y_2|\,\kappa(dx,dy)
&\ge
\int_{\mathbb R^2\times\mathbb R^2}
\bigl||y_2|-|x_2|\bigr|\,\kappa(dx,dy)\\
&\ge
\left|
\int_{\mathbb R^2}|y_2|\,Q(dy)
-
\int_{\mathbb R^2}|x_2|\,P(dx)
\right|\\
&=
(\sigma-1)\sqrt{\frac{2}{\pi}},
\end{aligned}
\]
because the second marginals of $P$ and $Q$ are
$\mathcal N(0,1)$ and $\mathcal N(0,\sigma^2)$, respectively. Therefore,
\[
W_{1,\beta}(P,Q)
\ge
m+(\sigma-1)\sqrt{\frac{2}{\pi}}.
\]

To prove the reverse inequality, let
\[
(Z_1,Z_2)
\sim
\mathcal N\left(
\begin{pmatrix}
0\\
0
\end{pmatrix},
\begin{pmatrix}
1 & \rho\\
\rho & 1
\end{pmatrix}
\right)
\]
and define
\[
X=(Z_1,Z_2),
\qquad
Y=(m+Z_1,\sigma Z_2).
\]
Then $X\sim P$ and $Y\sim Q$. The expected cost of this coupling is
\[
\begin{aligned}
\mathbb E\left[
|X_1-Y_1|+|X_2-Y_2|
\right]
&=
m+(\sigma-1)\mathbb E[|Z_2|]\\
&=
m+(\sigma-1)\sqrt{\frac{2}{\pi}}.
\end{aligned}
\]
Hence
\[
W_{1,\beta}(P,Q)
=
m+(\sigma-1)\sqrt{\frac{2}{\pi}}.
\]

By \eqref{Eq:MASDWassersteinCharacterization},
$P\preceq_{\gamma,\beta}^{\rm MASD}Q$ if and only if
\[
m
\ge
\frac{1-\gamma}{1+\gamma}
\left\{
m+(\sigma-1)\sqrt{\frac{2}{\pi}}
\right\}.
\]
This inequality is equivalent to
\[
\gamma
\left\{
2m+(\sigma-1)\sqrt{\frac{2}{\pi}}
\right\}
\ge
(\sigma-1)\sqrt{\frac{2}{\pi}},
\]
which proves
\eqref{Eq:MASDGaussianMeanDispersionThreshold}. 
\end{proof}

\begin{proof}[Proof of Proposition~\ref{Prop:CSDFinitelyManyScores}]
Within this proof, let
$T_\lambda(x):=(\lambda_1s_1(x),\ldots,\lambda_Js_J(x))$, and set
$\widetilde P:=(T_\lambda)_\#P$ and
$\widetilde Q:=(T_\lambda)_\#Q$.
For $x,y\in D$, the definition of $T_\lambda$ gives
\[
\sum_{j=1}^J
\{\lambda_j s_j(x)-\lambda_j s_j(y)\}_+
=
\sum_{j=1}^J
\lambda_j\{s_j(x)-s_j(y)\}_+
=
\ell_s(x,y).
\]
The same identity with $x$ and $y$ exchanged gives the score-increase cost.
Applying \eqref{Eq:ScoreProfileCouplingReduction} to the scaled scores
$\lambda_j s_j$, $j=1,\ldots,J$, with counting measure shows that
CSD holds if and only if some
$\pi\in\mathcal C(\widetilde P,\widetilde Q)$ satisfies
\[
\int_{\mathbb R^J\times\mathbb R^J}
\sum_{j=1}^J(z_j-w_j)_+\,\pi(dz,dw)
\le
\gamma
\int_{\mathbb R^J\times\mathbb R^J}
\sum_{j=1}^J(w_j-z_j)_+\,\pi(dz,dw).
\]
This is CSD for the coordinate scores on $\mathbb R^J$ with unit weights.
For these scores, the admissible gradient set in
Proposition~\ref{Prop:CSDLinearScoresMASD} is $[\gamma,1]^J$. The
proposition therefore implies that this relation holds if and only if
\[
\int_{\mathbb R^J}v(z)\,\widetilde P(dz)
\le
\int_{\mathbb R^J}v(z)\,\widetilde Q(dz)
\qquad
\text{for every differentiable }v\text{ with }
\gamma\le\partial_jv\le1\text{ for all }j.
\]
Substituting the definitions of $\widetilde P$ and $\widetilde Q$ gives
the expected-utility inequalities in
Proposition~\ref{Prop:CSDFinitelyManyScores}.
\end{proof}

\subsection{Lower-orthant scores}
\label{App:CSDLowerOrthantScores}

\noindent\textbf{Explicit formulas for the aggregate score and transport cost.}
Fix a finite cutoff $t\ge\underline x$. Write $S_t$, $d_t$, and $\ell_t$
for the quantities in \eqref{Eq:AggregateScore},
\eqref{Eq:TotalScoreDifferenceCost}, and \eqref{Eq:ScoreDecreaseCost},
respectively, for the score family $s^{(t)}$. Let
$x\vee y$ denote the coordinatewise maximum and let
$\operatorname{vol}$ denote Lebesgue volume.
For $x\in D$, define
$ 
R_t(x):=\{z\in\Theta_t:x\le z\}$ and 
$r_t(x):=\prod_{i=1}^d(t_i-x_i)_+.
$ 
The set $R_t(x)$ is $[x,t]$ when $x\le t$ and is empty otherwise.
Hence $\lambda_t(R_t(x))=r_t(x)$. Since
$s_z(x)=1-\mathbf 1_{\{z\in R_t(x)\}}$,
\begin{equation}
\label{Eq:LowerOrthantAggregateScore}
S_t(x)
=
\operatorname{vol}(\Theta_t)
-
\prod_{i=1}^d(t_i-x_i)_+.
\end{equation}

Moreover,
$|s_z(x)-s_z(y)|
=\mathbf 1_{\{z\in R_t(x)\mathbin{\triangle}R_t(y)\}}$ and
$R_t(x)\cap R_t(y)=R_t(x\vee y)$. Therefore,
\begin{equation}
\label{Eq:LowerOrthantProfileCost}
d_t(x,y)
=
\prod_{i=1}^d(t_i-x_i)_+
+
\prod_{i=1}^d(t_i-y_i)_+
-
2\prod_{i=1}^d\{t_i-(x_i\vee y_i)\}_+.
\end{equation}

Similarly,
$(s_z(x)-s_z(y))_+
=\mathbf 1_{\{z\in R_t(y)\setminus R_t(x)\}}$. Thus,
\begin{equation}
\label{Eq:LowerOrthantScoreDecreaseCost}
\ell_t(x,y)
=
S_t(x\vee y)-S_t(y).
\end{equation}

\noindent\textbf{Utility class for a fixed cutoff.}
The verification below shows that the assumptions for
Theorem~\ref{Thm:CSDUtilityRepresentation} hold for $s^{(t)}$.
By \eqref{Eq:LowerOrthantScoreDecreaseCost},
\[
\ell_t(x,y)-\gamma\ell_t(y,x)
=
\gamma S_t(x)
+
(1-\gamma)S_t(x\vee y)
-
S_t(y).
\]
Hence Theorem~\ref{Thm:CSDUtilityRepresentation} and
\eqref{Eq:CSDUtilityClassIncrement} imply that
$P\preceq_{s^{(t)},\lambda_t,\gamma}^{\rm CSD}Q$ if and only if
$\mathbb E_P[u]\le\mathbb E_Q[u]$ for every $u:D\to\mathbb R$ satisfying\footnote{Note that we restrict the utility class to $D$ while the paper's result consider $\mathbb{R}^{d}$. To justify this, let $\Pi_D$ denote the
coordinatewise projection onto $D$. If $u:D\to\mathbb R$ satisfies
\eqref{Eq:LowerOrthantUtilityRestriction}, define
$\widetilde u(x):=u(\Pi_D(x))$ for $x\in\mathbb R^d$.
For the projected scores, the right-hand side of
\eqref{Eq:CSDUtilityClassIncrement} at $(x,y)$ is the right-hand side of
\eqref{Eq:LowerOrthantUtilityRestriction} evaluated at
$(\Pi_D(x),\Pi_D(y))$. Thus $\widetilde u$ satisfies
\eqref{Eq:CSDUtilityClassIncrement}. Conversely, restricting any utility
in $\mathcal U_{s^{(t)},\lambda_t,\gamma}$ to $D$ gives
\eqref{Eq:LowerOrthantUtilityRestriction}.}
\begin{equation}
\label{Eq:LowerOrthantUtilityRestriction}
u(x)-u(y)
\le
\gamma S_t(x)
+
(1-\gamma)S_t(x\vee y)
-
S_t(y)
\qquad
\text{for all }x,y\in D.
\end{equation}

For $x\le y$, applying
\eqref{Eq:LowerOrthantUtilityRestriction} to $(x,y)$ and $(y,x)$ gives
\[
\gamma\{S_t(y)-S_t(x)\}
\le
u(y)-u(x)
\le
S_t(y)-S_t(x).
\]
At $\gamma=1$, applying
\eqref{Eq:LowerOrthantUtilityRestriction} in both directions for arbitrary
$x,y$ gives $u(x)-u(y)=S_t(x)-S_t(y)$. Hence $u=c+S_t$ on $D$ for some
$c\in\mathbb R$.

\noindent\textbf{Verification of the CSD assumptions.}
We verify the assumptions stated before
Definition~\ref{Def:ScoreCompensatedDominance}.
The map $(z,x)\mapsto s_z(x)$ is jointly measurable. Each score is
coordinatewise nondecreasing because $\Pi_D$ is order preserving.
For $x,y\in D$, $d_t(x,y)$ is the volume of
$R_t(x)\mathbin{\triangle}R_t(y)$. Change the coordinates of $x$ to those
of $y$ one at a time. If two successive points differ only in coordinate
$i$, the symmetric difference of their corresponding sets is contained in
a slab of width $|x_i-y_i|$ and cross-sectional volume at most
$\prod_{j\ne i}(t_j-\underline x_j)$. Therefore,
\[
d_t(x,y)
\le
\sum_{i=1}^d
|x_i-y_i|
\prod_{j\ne i}(t_j-\underline x_j).
\]
Setting
$
\overline L_t
:=
\max_{1\le i\le d}
\prod_{j\ne i}(t_j-\underline x_j),
$ 
we obtain $d_t(x,y)\le\overline L_t\|x-y\|_1$ for all $x,y\in D$.
For the extension to $\mathbb R^d$ used in the main text,
\[
\int_{\Theta_t}|s_z(x)-s_z(y)|\,dz
=
d_t\bigl(\Pi_D(x),\Pi_D(y)\bigr).
\]
Since $\Pi_D$ is $1$-Lipschitz under $\|\cdot\|_1$, it follows that
\[
\int_{\Theta_t}|s_z(x)-s_z(y)|\,dz
\le
\overline L_t\|x-y\|_1
\qquad
\text{for all }x,y\in\mathbb R^d.
\]
Finally, $\lambda_t(\Theta_t)<\infty$ and every score takes values in
$\{0,1\}$, so
$\int_{\Theta_t}|s_z(0)|\,dz\le\lambda_t(\Theta_t)<\infty$.
Thus the assumptions hold for $s^{(t)}$. Consequently,
Lemma~\ref{Lem:ScoreRegularity},
Lemma~\ref{Lem:ScoreProfileReduction},
Theorem~\ref{Thm:CSDUtilityRepresentation}, and
Corollary~\ref{Cor:CSDWassersteinCriterion} apply.

\begin{proof}[Proof of Corollary~\ref{Cor:CSDLowerOrthant}]
First consider $\gamma=0$. If $Q$ dominates $P$ by FOSD, Strassen's
theorem \citep{Strassen1965} gives a coupling
$\kappa\in\mathcal C(P,Q)$ concentrated on pairs $(x,y)$ with $x\le y$.
Since every $s_z$ is coordinatewise nondecreasing,
$\ell_t(x,y)=0$ whenever $x\le y$. Thus the same coupling satisfies
\eqref{Eq:ScoreCompensatedStrassen} at $\gamma=0$ for every cutoff $t$,
and lower-orthant CSD holds.

Conversely, suppose lower-orthant CSD holds at $\gamma=0$. Choose a finite
$t^\circ>\overline x$ coordinatewise. By
\eqref{Eq:LowerOrthantCSD}, there is
$\kappa\in\mathcal C(P,Q)$ such that
\[
\int_{D\times D}\ell_{t^\circ}(x,y)\,\kappa(dx,dy)=0.
\]
We show that $\ell_{t^\circ}(x,y)>0$ whenever $x\not\le y$.
Choose a coordinate $i$ such that $x_i>y_i$ and consider
\[
B
:=
\left\{
z\in\Theta_{t^\circ}:
y_i\le z_i<x_i,\quad
x_j\vee y_j\le z_j\le t_j^\circ
\text{ for every }j\ne i
\right\}.
\]
Because $t^\circ>\overline x$, the set $B$ has positive Lebesgue measure.
Every $z\in B$ satisfies $y\le z$ but $x\not\le z$. Hence
$s_z(x)=1>s_z(y)=0$ on $B$, so
$\ell_{t^\circ}(x,y)>0$.
Since $\ell_{t^\circ}$ is nonnegative, the zero-integral condition implies
$x\le y$ for $\kappa$-almost every $(x,y)$. Strassen's theorem then gives
FOSD.

Now consider $\gamma=1$. For every $t\ge\underline x$,
Corollary~\ref{Cor:CSDWassersteinCriterion} gives
\[
P\preceq_{s^{(t)},\lambda_t,1}^{\rm CSD}Q
\quad\Longleftrightarrow\quad
\mathbb E_Q[S_t]-\mathbb E_P[S_t]\ge0.
\]
For any probability law $R$ supported on $D$, Fubini's theorem gives
\[
\begin{aligned}
\mathbb E_R
\left[
\prod_{i=1}^d(t_i-X_i)_+
\right]
&=
\mathbb E_R
\left[
\int_{\Theta_t}\mathbf 1_{\{X\le z\}}\,dz
\right]\\
&=
\int_{\Theta_t}F_R(z)\,dz.
\end{aligned}
\]
Using \eqref{Eq:LowerOrthantAggregateScore},
\[
\mathbb E_Q[S_t]-\mathbb E_P[S_t]
=
\int_{\Theta_t}\{F_P(z)-F_Q(z)\}\,dz.
\]
Because $P$ and $Q$ are supported on $D$, for $t\ge\underline x$ the last
integral equals
$\int_{(-\infty,t]}\{F_P(z)-F_Q(z)\}\,dz$.
Thus lower-orthant CSD at $\gamma=1$ is equivalent to
\eqref{Eq:SecondDegreeLowerOrthant} for every $t\ge\underline x$.
If some coordinate of $t$ is below the corresponding coordinate of
$\underline x$, then $F_P(z)=F_Q(z)=0$ for every $z\le t$, so both sides
of \eqref{Eq:SecondDegreeLowerOrthant} are zero. Hence the equivalence
holds for every $t\in\mathbb R^d$.

Finally, let $d=1$ and write $x\wedge t:=\min\{x,t\}$. For
$t\ge\underline x$, \eqref{Eq:LowerOrthantAggregateScore} and
\eqref{Eq:LowerOrthantProfileCost} give
\[
S_t(x)=x\wedge t-\underline x,
\qquad
d_t(x,y)=|x\wedge t-y\wedge t|.
\]
The score profile is determined by $x\wedge t$, and its $L^1(\lambda_t)$
distance is $|x\wedge t-y\wedge t|$. Hence
Lemma~\ref{Lem:ScoreProfileReduction} implies that
$W_{s^{(t)}}(P,Q)$ is the $1$-Wasserstein distance between the laws of
$X\wedge t$ and $Y\wedge t$. The standard one-dimensional formula for
the $1$-Wasserstein distance
\citep{Villani2009OptimalTransport} therefore gives
\[
W_{s^{(t)}}(P,Q)
=
\int_{\underline x}^{t}|F_P(z)-F_Q(z)|\,dz.
\]
The preceding Fubini calculation also gives
\[
\Delta_{s^{(t)}}(P,Q)
=
\int_{\underline x}^{t}\{F_P(z)-F_Q(z)\}\,dz.
\]

Within this proof, define
\[
A_t
:=
\int_{\underline x}^{t}\{F_P(z)-F_Q(z)\}_+\,dz,
\qquad
B_t
:=
\int_{\underline x}^{t}\{F_Q(z)-F_P(z)\}_+\,dz.
\]
Then $W_{s^{(t)}}(P,Q)=A_t+B_t$ and
$\Delta_{s^{(t)}}(P,Q)=A_t-B_t$.
Corollary~\ref{Cor:CSDWassersteinCriterion} therefore gives
\[
P\preceq_{s^{(t)},\lambda_t,\gamma}^{\rm CSD}Q
\quad\Longleftrightarrow\quad
A_t-B_t
\ge
\frac{1-\gamma}{1+\gamma}(A_t+B_t),
\]
which is equivalent to $B_t\le\gamma A_t$. This is exactly
\eqref{Eq:CSDOneDimensionalBetweenOrders}.

For $t<\underline x$, both sides of
\eqref{Eq:CSDOneDimensionalBetweenOrders} are zero. For
$t>\overline x$, $F_P(z)-F_Q(z)=0$ for $z>\overline x$, so the condition
at $t$ is the same as the condition at $\overline x$. Thus requiring
\eqref{Eq:CSDOneDimensionalBetweenOrders} for every finite
$t\ge\underline x$ is equivalent to requiring it for every
$t\in\mathbb R$. The integral characterization in
\citet{MullerScarsiniTsetlinWinkler2017} identifies this condition with
$(1+\gamma)$-stochastic dominance.
\end{proof}

\section{Proofs and Technical Details for Reference-Weighted Stochastic Dominance}
\label{App:RWSDProofs}

\subsection{CVaR representation and proof of Theorem~\ref{Thm:CVAR}}
\label{App:RWSDCVaR}

The proof uses the standard risk-envelope representation of CVaR:
for every bounded measurable $L$ and $C\ge1$,
\begin{equation}
\label{Eq:AppendixCVARRiskEnvelope}
\CVaR_C^\rho(L)
=
\sup_{\substack{\nu\in\mathcal P(\Theta)\\ \nu\le C\rho}}
\int_\Theta L\,d\nu.
\end{equation}
The scalar representation in \eqref{Eq:CVAR} is Theorem~10 and
Equation~(28) in \citet{RockafellarUryasev2002}. The equivalent density-cap
risk envelope in \eqref{Eq:AppendixCVARRiskEnvelope} is standard; see
Section~6.2.4 in \citet{ShapiroDentchevaRuszczynski2021}. We use these established representations in the proof.

\begin{proof}[Proof of Theorem~\ref{Thm:CVAR}]
The assumptions imply that $\Gamma_{P,Q}$ is bounded and
$\mathscr A$-measurable. They also imply that
$x\mapsto\int_\Theta \varphi_\theta(x)\,\Pi(d\theta)$ is bounded and
$\mathscr X$-measurable for every $\Pi\in\mathcal P(\Theta)$. Fix
$P,Q\in\mathcal P(X)$. Every $u\in\mathcal U_{\Phi;k,K;m,M}$ has the form
$u(x)=a+\tau\int_\Theta \varphi_\theta(x)\,\Pi(d\theta)$ for some
$a\in\mathbb R$, $\tau\ge0$, and
$\Pi\in\mathcal P_{k,K;m,M}$. Fubini's theorem gives
\[
\begin{aligned}
\mathbb E_Q[u]-\mathbb E_P[u]
&=
\tau
\left[
\int_X\int_\Theta \varphi_\theta(x)\,\Pi(d\theta)\,Q(dx)
-
\int_X\int_\Theta \varphi_\theta(x)\,\Pi(d\theta)\,P(dx)
\right] \\
&=
\tau
\int_\Theta
\left[
\mathbb E_Q[\varphi_\theta]
-
\mathbb E_P[\varphi_\theta]
\right]\Pi(d\theta) \\
&=
\tau\int_\Theta \Gamma_{P,Q}(\theta)\,\Pi(d\theta).
\end{aligned}
\]
Therefore, $Q\succeq_{\Phi;k,K;m,M}P$ if and only if
\begin{equation}
\label{Eq:WorstReferenceIntegralCondition}
\int_\Theta \Gamma_{P,Q}\,d\Pi\ge0
\qquad
\text{for every }\Pi\in\mathcal P_{k,K;m,M}.
\end{equation}
Indeed, \eqref{Eq:WorstReferenceIntegralCondition} makes the expected-utility
difference nonnegative for every $\tau\ge0$. Conversely, if it fails for an
admissible $\Pi$, then $a=0$ and $\tau=1$ produce an admissible utility with a
negative expected-utility difference.

It remains to evaluate the smallest admissible integral. By
\eqref{Eq:TwoReferenceClass}, every $\Pi\in\mathcal P_{k,K;m,M}$ can be
written as $\Pi=k\,m+\mu$, where $0\le\mu\le K\,M-k\,m$ and
$\mu(\Theta)=1-k$. Writing $\mu=(1-k)\nu$, these conditions are equivalent to
$\nu\in\mathcal P(\Theta)$ and $\nu\le C\,\overline R$, because
\[
(1-k)C\,\overline R
=
(1-k)\frac{K-k}{1-k}\overline R
=
(K-k)\overline R
=
K\,M-k\,m.
\]
Consequently,
\[
\mathcal P_{k,K;m,M}
=
\left\{
k\,m+(1-k)\nu:
\nu\in\mathcal P(\Theta),\ \nu\le C\,\overline R
\right\}.
\]
Hence
\[
\begin{aligned}
\inf_{\Pi\in\mathcal P_{k,K;m,M}}
\int_\Theta \Gamma_{P,Q}\,d\Pi
&=
\inf_{\substack{\nu\in\mathcal P(\Theta)\\ \nu\le C\,\overline R}}
\left\{
k\int_\Theta \Gamma_{P,Q}\,dm
+
(1-k)\int_\Theta \Gamma_{P,Q}\,d\nu
\right\} \\
&=
k\int_\Theta \Gamma_{P,Q}\,dm
+
(1-k)
\inf_{\substack{\nu\in\mathcal P(\Theta)\\ \nu\le C\,\overline R}}
\int_\Theta \Gamma_{P,Q}\,d\nu \\
&=
k\int_\Theta \Gamma_{P,Q}\,dm
-
(1-k)
\sup_{\substack{\nu\in\mathcal P(\Theta)\\ \nu\le C\,\overline R}}
\int_\Theta(-\Gamma_{P,Q})\,d\nu \\
&=
k\int_\Theta \Gamma_{P,Q}\,dm
-
(1-k)\CVaR_C^{\overline R}(-\Gamma_{P,Q}),
\end{aligned}
\]
where the final equality uses \eqref{Eq:AppendixCVARRiskEnvelope}. Condition
\eqref{Eq:WorstReferenceIntegralCondition} is therefore equivalent to
\[
k\int_\Theta \Gamma_{P,Q}\,dm
-
(1-k)\CVaR_C^{\overline R}(-\Gamma_{P,Q})
\ge0.
\]
Multiplying by $-1$ gives \eqref{Eq:TwoReferenceDominanceCVAR}.
\end{proof}

\subsection{Utility derivations for the RWSD examples}
\label{App:RWSDExampleDerivations}

\begin{lemma}[Integral representation of increasing concave utility]
\label{Lem:SlopeMeasure}
Let $u$ be a nonconstant increasing concave function on $[0,\overline x]$ with
$0<u'_+(0)<\infty$. There is a unique probability measure $\Pi_u$ on
$(0,\overline x]$ such that
\begin{equation}
\label{Eq:SlopeMeasureRepresentation}
u(x)=u(0)+u'_+(0)
\int_{(0,\overline x]}\min\{x,t\}\,\Pi_u(dt).
\end{equation}
For $x\in(0,\overline x]$,
\begin{equation}
\label{Eq:SlopeMeasureTail}
\Pi_u([x,\overline x])=
\frac{u'_-(x)}{u'_+(0)}.
\end{equation}
\end{lemma}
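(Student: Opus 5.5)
The plan is to build $\Pi_u$ directly from the left derivative, then verify the representation with Fubini and the fundamental theorem of calculus for concave functions, and finally obtain uniqueness from the tail formula.

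\textbf{Construction.} Since $u$ is concave on $[0,\overline x]$ with $u'_+(0)<\infty$, the left derivative $u'_-$ exists on $(0,\overline x]$. It is nonincreasing and left-continuous, and it satisfies $0\le u'_-(x)\le u'_+(0)$; nonnegativity holds because $u$ is increasing. Define $\Pi_u$ on $(0,\overline x]$ by
\[
\Pi_u([x,\overline x]):=\frac{u'_-(x)}{u'_+(0)}\quad\text{for }x\in(0,\overline x],
\]
with the atom at $\overline x$ given by $u'_-(\overline x)/u'_+(0)$. Equivalently, $\Pi_u((a,b])$ is computed from differences of $u'_+$ on $(0,\overline x)$, together with that atom. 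Total mass one follows from $\lim_{x\downarrow0}u'_-(x)=u'_+(0)$, which is a standard property of concave functions. Countable additivity follows because the tail function $x\mapsto u'_-(x)/u'_+(0)$ is nonincreasing and left-continuous, which is exactly what a function of the form $x\mapsto\Pi([x,\overline x])$ must be. This is the Lebesgue--Stieltjes construction with the endpoint convention chosen so that intervals closed on the left correspond to $u'_-$.

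\textbf{Representation.} Write $\min\{x,t\}=\int_0^x\mathbf 1_{\{s<t\}}\,ds$ and apply Tonelli:
\[
\int_{(0,\overline x]}\min\{x,t\}\,\Pi_u(dt)=\int_0^x\Pi_u((s,\overline x])\,ds=\int_0^x\frac{u'_+(s)}{u'_+(0)}\,ds.
\]
The middle equality needs $\Pi_u((s,\overline x])=\lim_{r\downarrow s}\Pi_u([r,\overline x])=u'_+(s)/u'_+(0)$, which uses $\lim_{r\downarrow s}u'_-(r)=u'_+(s)$. Alternatively, one can note that $u'_-$ and $u'_+$ differ only at countably many points. A concave function is absolutely continuous on $[0,x]$ with derivative $u'_+$ almost everywhere, so $u(x)-u(0)=\int_0^x u'_+(s)\,ds$, and this gives the representation. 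The tail formula then holds by construction.

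\textbf{Uniqueness.} Suppose $\Pi$ is any probability measure satisfying the representation. The Tonelli computation above gives $u(x)-u(0)=u'_+(0)\int_0^x\Pi((s,\overline x])\,ds$. Taking left derivatives at $x$ gives $u'_-(x)=u'_+(0)\,\Pi([x,\overline x])$, because $s\mapsto\Pi((s,\overline x])$ is right-continuous and its left limit at $x$ is $\Pi([x,\overline x])$. So the tails of any representing measure are determined on $(0,\overline x]$, and these sets form a $\pi$-system generating the Borel sets. Hence $\Pi=\Pi_u$.

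The main obstacle is the endpoint bookkeeping: matching left and right derivatives to closed and open tails, handling the atom at $\overline x$, and confirming mass one via $u'_-(0+)=u'_+(0)$. Everything else is routine Fubini together with the absolute continuity of concave functions.
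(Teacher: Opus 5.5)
Your proposal is correct and follows essentially the same route as the paper. Like the paper, you build $\Pi_u$ as the Lebesgue--Stieltjes measure with tail function $u'_-/u'_+(0)$ and then use absolute continuity of $u$ to identify $\int\min\{x,t\}\,\Pi_u(dt)$ with $(u(x)-u(0))/u'_+(0)$; you do this through the explicit Tonelli identity, where the paper compares the left derivatives of $v$ and $u$. Your uniqueness step, which left-differentiates the representation for an arbitrary representing measure to recover its tails, spells out what the paper compresses into ``uniqueness follows from the tail formula.''
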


\begin{proof}
Set $c:=u'_+(0)$. Concavity and monotonicity imply that
$T(x):=u'_-(x)/c$ is nonincreasing, left-continuous, takes values in $[0,1]$,
and converges to one as $x\downarrow0$. Hence $T$ is the tail function of a
unique probability measure $\Pi_u$ on $(0,\overline x]$, which gives
\eqref{Eq:SlopeMeasureTail}.

Let
$v(x):=\int\min\{x,t\}\,\Pi_u(dt)$. Then
$v'_-(x)=\Pi_u([x,\overline x])=u'_-(x)/c$ for
$x\in(0,\overline x]$. The finite derivative at zero makes $u$ Lipschitz on
the compact interval; $v$ is Lipschitz as well. Thus both functions are
absolutely continuous, and equality of their derivatives almost everywhere,
together with $v(0)=0$, proves \eqref{Eq:SlopeMeasureRepresentation}.
Uniqueness follows from \eqref{Eq:SlopeMeasureTail}.
\end{proof}

For Example~\ref{example:FOSD-RWSD}, an admissible measure has density $\pi$
with $kw_m\le\pi\le Kw_M$ and $\int\pi=1$. The generated utility is
\[
u(x)=a+\tau\int_{\underline x}^{x}\pi(t)\,dt,
\]
so $u'=\tau\pi$ almost everywhere. Dividing by
$\int u'=\tau$ gives \eqref{Eq:RWSDFOSDMarginalUtility}. Conversely, a nonconstant
absolutely continuous increasing utility satisfying \eqref{Eq:RWSDFOSDMarginalUtility}
generates the admissible density $\pi=u'/\int u'$ and is therefore in the
RWSD utility class.

For Example~\ref{example:LowerOrthant-RWSD}, let $q_1,q_2,q_{12}$ denote the
component densities of an admissible integrating measure. Direct integration
of the three test-function families gives
\[
\begin{aligned}
u(x_1,x_2)
=a+\tau\Bigg[1
&-\int_{x_1}^{\overline x_1}q_1(t_1)\,dt_1
-\int_{x_2}^{\overline x_2}q_2(t_2)\,dt_2\\
&-\int_{x_1}^{\overline x_1}\int_{x_2}^{\overline x_2}
q_{12}(t_1,t_2)\,dt_2dt_1\Bigg].
\end{aligned}
\]
Thus, at almost every interior point,
\[
\begin{aligned}
u_1(x)
&=\tau\left[q_1(x_1)+
\int_{x_2}^{\overline x_2}q_{12}(x_1,t_2)\,dt_2\right],\\
u_2(x)
&=\tau\left[q_2(x_2)+
\int_{x_1}^{\overline x_1}q_{12}(t_1,x_2)\,dt_1\right],\\
-u_{12}(x)&=\tau q_{12}(x).
\end{aligned}
\]
Combining these identities with the componentwise reference bounds yields the
marginal-utility bounds described in the main text and
$\tau k w_{m,12}\le-u_{12}\le\tau K w_{M,12}$.

For Example~\ref{example:SOSD-RWSD}, the no-mass condition at $0$ makes every
admissible integrating measure a probability measure on $(0,\overline x]$.
Lemma~\ref{Lem:SlopeMeasure} then shows that the probability measure
$\Pi_u$ in the utility representation is exactly the integrating measure. If $u'_+$ is
absolutely continuous on $(0,\overline x)$, its density is
$-u''(t)/u'_+(0)$ and its possible atom at the upper endpoint is
$u'_-(\overline x)/u'_+(0)$. The two reference inequalities therefore give
exactly the curvature and endpoint bounds stated in the example.

\section{Proof of finite-sample guarantees}

\subsection{Proof of the finite-sample CSD guarantee}
\label{App:CSDStatProofs}

We first note that for empirical distributions, the transportation polytope is fixed and compact,
and its optimal value is a continuous function of the finite cost matrix.
Because every cost-matrix entry is measurable in the observations, the random
optimal values used below are measurable.

\begin{remark}[Bounds required by the CSD confidence statement]
Proposition~\ref{Prop:CSDStatisticalCertificate} needs finite bounds on the
effect of replacing one observation. 
For a known bounded support $D_0$ containing both distributions, let
$\operatorname{diam}_1(D_0)$ be the largest $\ell_1$-distance between two
points in $D_0$. Since $t\mapsto t_+$ is $1$-Lipschitz, each of
$c_P^L,c_Q^L,c_P^S,c_Q^S$ is at most
$\overline L_s\operatorname{diam}_1(D_0)$. For coordinate scores on
$D_0=\prod_{i=1}^d[\underline x_i,\overline x_i]$, the sharper common bound
is $\sum_{i=1}^d\beta_i(\overline x_i-\underline x_i)$.
\end{remark}

\begin{proof}[Proof of Proposition~\ref{Prop:CSDStatisticalCertificate}]
Write $L^\star:=L_s^\star(P,Q)$ and
$\widehat L^\star:=\widehat L_s^\star$. The product coupling of the two
empirical measures gives
\[
\widehat L^\star
\le
\frac{1}{N_P N_Q}
\sum_{i=1}^{N_P}\sum_{j=1}^{N_Q}\ell_s(X_i,Y_j).
\]
By Lemma~\ref{Lem:ScoreRegularity} and the finite first moments, the
right-hand side is integrable. Hence $\widehat L^\star$ is integrable and
the expectations below are well defined. At $\gamma=0$,
Theorem~\ref{Thm:CSDUtilityRepresentation} gives
\[
L^\star
=
\sup_{u\in\mathcal U_{s,\lambda,0}}
\left\{
\int_D u\,dP-
\int_D u\,dQ
\right\}.
\]
The empirical averages are unbiased for each fixed utility. Therefore,
\[
L^\star
\le
\mathbb E\left[
\sup_{u\in\mathcal U_{s,\lambda,0}}
\left\{
\int_D u\,d\widehat P_{N_P}-
\int_D u\,d\widehat Q_{N_Q}
\right\}
\right]
=
\mathbb E[\widehat L^\star].
\]
Thus $F:=L^\star-\widehat L^\star$ satisfies $\mathbb E[F]\le0$, and hence
\begin{equation}
\label{Eq:CSDCenteredLossInclusion}
\{F>t\}
\subseteq
\{F-\mathbb E[F]>t\}
\qquad (t>0).
\end{equation}

For fixed sample values, $\widehat L^\star$ is the value of the empirical
transportation problem with cost $\ell_s$. Replacing $X_i$ by
$X_i'\in D_P$ leaves the transportation polytope unchanged and modifies
only row $i$, whose mass is $1/N_P$. Hence the optimal value changes by at
most $c_P^L/N_P$. Replacing one $Y_j$ changes it by at most
$c_Q^L/N_Q$. If
$(c_P^L)^2/N_P+(c_Q^L)^2/N_Q=0$, then $\widehat L^\star$ is constant on the
full-measure sample domain. Since
$L^\star\le\mathbb E[\widehat L^\star]$, the desired bound follows
directly. Otherwise,
McDiarmid's inequality and \eqref{Eq:CSDCenteredLossInclusion} give
\[
\mathbb P\{F>t\}
\le
\exp\left\{
-\frac{2t^2}
{(c_P^L)^2/N_P+(c_Q^L)^2/N_Q}
\right\}.
\]
Let $r_L:=\overline L_\delta-\widehat L^\star$, which is the
nonnegative square-root term in the definition of $\overline L_\delta$.
Taking $t=r_L$ gives
\[
\mathbb P\{L^\star>\widehat L^\star+r_L\}
\le
\frac{\delta}{2};
\]
see Theorem~6.2 in \citet{BoucheronLugosiMassart2013}. Thus
\begin{equation}
\label{Eq:CSDPopulationLossBound}
L_s^\star(P,Q)\le\overline L_\delta
\end{equation}
with probability at least $1-\delta/2$.

The statistic $\widehat\Delta_s$ is unbiased for $\Delta_s(P,Q)$.
Replacing one $X_i$ changes it by at most $c_P^S/N_P$, and replacing one
$Y_j$ changes it by at most $c_Q^S/N_Q$. Let
$r_\Delta:=\widehat\Delta_s-\underline\Delta_\delta$, the square-root term
in the definition of $\underline\Delta_\delta$. A second application of
bounded differences gives
\[
\mathbb P\{\Delta_s(P,Q)<\widehat\Delta_s-r_\Delta\}
\le
\frac{\delta}{2}.
\]
Equivalently,
\begin{equation}
\label{Eq:CSDPopulationMeanBound}
\Delta_s(P,Q)\ge\underline\Delta_\delta
\end{equation}
with probability at least $1-\delta/2$. A union bound makes
\eqref{Eq:CSDPopulationLossBound} and
\eqref{Eq:CSDPopulationMeanBound} simultaneous with probability at least
$1-\delta$.

On this event, Theorem~\ref{Thm:CompensatedScoreTransport} yields, for every
$\gamma\in[0,1]$,
\[
\mathfrak D_{s,\lambda,\gamma}(P,Q)
=
(1-\gamma)L_s^\star(P,Q)-\gamma\Delta_s(P,Q)
\le
(1-\gamma)\overline L_\delta-
\gamma\underline\Delta_\delta.
\]
This proves \eqref{Eq:CSDUniformGammaBound}. Because the event is uniform in
$\gamma$, the same implication remains valid when $\gamma$ is selected from
the data. If
$\overline{\mathfrak D}_\delta(\widehat\gamma)\le0$, then
$\mathfrak D_{s,\lambda,\widehat\gamma}(P,Q)\le0$ on the simultaneous
event. By Definition~\ref{Def:ScoreCompensatedDominance} and
Theorem~\ref{Thm:CompensatedScoreTransport}, this implies
$P\preceq_{s,\lambda,\widehat\gamma}^{\rm CSD}Q$. The complement of the
simultaneous event has probability at most $\delta$, proving
\eqref{Eq:CSDFalseCertification}.
\end{proof}

\begin{proof}[Proof of Corollary~\ref{Cor:CSDCriticalGammaCertificate}]
On the event in \eqref{Eq:CSDUniformGammaBound}, a finite
$\overline\gamma_\delta$ satisfies
$\mathfrak D_{s,\lambda,\overline\gamma_\delta}(P,Q)\le0$. Hence
$\gamma_{s,\lambda}^\star(P,Q)\le\overline\gamma_\delta$. CSD is nested
in $\gamma$ by Proposition~\ref{Prop:CSDOrderStructure}, so every larger
value also supports the comparison. If
$\overline\gamma_\delta=+\infty$, the coverage statement is automatic.
\end{proof}

\subsection{Proof of the finite-sample RWSD guarantee}
\label{App:RWSDStatProof}

\begin{proof}[Proof of Theorem~\ref{Thm:RWSD_STATS}]
We first verify the measurability assertion used in the theorem. For any
bounded measurable $G:\Theta\to\mathbb R$, the admissible-measure
decomposition in the proof of Theorem~\ref{Thm:CVAR} and the scalar CVaR
representation give
\begin{equation}
\label{Eq:RWSDValueScalarRepresentation}
V(G)
=
k\int_\Theta G\,dm
+(1-k)
\inf_{\eta\in\mathbb R}
\left\{
\eta+C\int_\Theta(G-\eta)_+\,d\overline R
\right\}.
\end{equation}
For $G=-\Gamma_{\widehat P_{n_P},\widehat Q_{n_Q}}$, joint measurability of
$(x,\theta)\mapsto\varphi_\theta(x)$ implies that the integrals in
\eqref{Eq:RWSDValueScalarRepresentation} are measurable functions of the two
samples for each fixed $\eta$. The objective is continuous in $\eta$, so its
infimum over $\mathbb R$ equals its infimum over $\mathbb Q$. Hence the
empirical value is a countable infimum of measurable functions and is
measurable.

Let
$H(\theta):=-\Gamma_{P,Q}(\theta)
+\Gamma_{\widehat P_{n_P},\widehat Q_{n_Q}}(\theta)$. Equivalently,
\[
H(\theta)
=
\left\{
\mathbb E_P[\varphi_\theta]
-
\mathbb E_{\widehat P_{n_P}}[\varphi_\theta]
\right\}
+
\left\{
\mathbb E_{\widehat Q_{n_Q}}[\varphi_\theta]
-
\mathbb E_Q[\varphi_\theta]
\right\}.
\]
The two terms are independent. Since the range of $\varphi_\theta$ has
length at most $b$, Hoeffding's lemma
\citep[Ch.~2]{BoucheronLugosiMassart2013} gives, for every $\xi>0$,
\begin{equation}
\label{Eq:StatHoeffdingMGF}
\mathbb E\exp\{\xi H(\theta)\}
\le
\exp\left\{
\frac{\xi^2 b^2}{8}
\left(\frac1{n_P}+\frac1{n_Q}\right)
\right\}.
\end{equation}

Set $\Pi_0:=k\,m+(1-k)\overline R$. By
\eqref{Eq:ResidualReference}, $\Pi_0$ is a probability measure. It also
belongs to $\mathcal P_{k,K;m,M}$ because $\Pi_0\ge k\,m$ and
\[
\Pi_0
=
k\,m+(1-k)\overline R
\le
k\,m+(K-k)\overline R
=
K\,M.
\]
Integrating \eqref{Eq:StatHoeffdingMGF} with respect to $\Pi_0$ and applying
Fubini--Tonelli gives
\[
\mathbb E
\left[
\int_\Theta \exp\{\xi H(\theta)\}\,\Pi_0(d\theta)
\right]
\le
\exp\left\{
\frac{\xi^2 b^2}{8}
\left(\frac1{n_P}+\frac1{n_Q}\right)
\right\}.
\]
Markov's inequality therefore implies that, with probability at least
$1-\delta$,
\begin{equation}
\label{Eq:StatExponentialEvent}
\log\int_\Theta \exp\{\xi H(\theta)\}\,\Pi_0(d\theta)
\le
\frac{\xi^2 b^2}{8}
\left(\frac1{n_P}+\frac1{n_Q}\right)
+
\log(1/\delta).
\end{equation}

For probability measures $\mu\ll\nu$, write
\[
D_{\mathrm{KL}}(\mu\Vert\nu)
:=
\int \log\left(\frac{d\mu}{d\nu}\right)\,d\mu,
\]
with the convention $0\log0=0$. We next bound the entropy of every admissible
measure relative to $\Pi_0$. Fix $\Pi\in\mathcal P_{k,K;m,M}$. The
decomposition used in the proof of Theorem~\ref{Thm:CVAR} gives
\[
\Pi=k\,m+(1-k)\nu,
\qquad
\nu\in\mathcal P(\Theta),
\qquad
\nu\le C\,\overline R.
\]
Writing $h=d\nu/d\overline R$, we have $0\le h\le C$ and
$\int h\,d\overline R=1$. Hence
\[
D_{\mathrm{KL}}(\nu\Vert \overline R)
=
\int_\Theta h\log h\,d\overline R
\le
\log C.
\]
The same decomposition implies $\Pi\ll\Pi_0$. Applying the log-sum
inequality to Radon--Nikodym densities with respect to a common dominating
measure, as in Theorem~2.7.1 in \citet{CoverThomas2006}, yields
\[
D_{\mathrm{KL}}(\Pi\Vert\Pi_0)
=
D_{\mathrm{KL}}
\bigl(km+(1-k)\nu\,\Vert\,km+(1-k)\overline R\bigr)
\le
(1-k)D_{\mathrm{KL}}(\nu\Vert\overline R)
\le
(1-k)\log C.
\]

The entropy inequality used below follows from Jensen's inequality. Let
$r=d\Pi/d\Pi_0$. Since every admissible measure has finite relative entropy,
\[
\begin{aligned}
\xi\int_\Theta H\,d\Pi-D_{\mathrm{KL}}(\Pi\Vert\Pi_0)
&=
\int_{\{r>0\}}
\log\left(\frac{e^{\xi H}}{r}\right)d\Pi\\
&\le
\log\int_{\{r>0\}}\frac{e^{\xi H}}{r}\,d\Pi
\le
\log\int_\Theta e^{\xi H}\,d\Pi_0.
\end{aligned}
\]
Thus
\begin{equation}
\label{Eq:EntropyInequality}
\xi\int_\Theta H\,d\Pi
\le
D_{\mathrm{KL}}(\Pi\Vert\Pi_0)
+
\log\int_\Theta e^{\xi H}\,d\Pi_0.
\end{equation}
Combining \eqref{Eq:StatExponentialEvent}, \eqref{Eq:EntropyInequality}, and
the entropy bound gives, uniformly over
$\Pi\in\mathcal P_{k,K;m,M}$,
\[
\int_\Theta H\,d\Pi
\le
\frac{\log(1/\delta)+(1-k)\log C}{\xi}
+
\frac{\xi b^2}{8}
\left(\frac1{n_P}+\frac1{n_Q}\right).
\]
If $b=0$, then $H(\theta)=0$ for every $\theta$, so the deviation
bound holds with equality. Suppose $b>0$. Choosing
\[
\xi
=
\sqrt{
\frac{
8\{\log(1/\delta)+(1-k)\log C\}
}{
b^2\left(\frac1{n_P}+\frac1{n_Q}\right)
}
}
\]
yields
\begin{equation}
\label{Eq:StatUniformDeviation}
\sup_{\Pi\in\mathcal P_{k,K;m,M}}
\int_\Theta H\,d\Pi
\le
\varepsilon_{n_P,n_Q}(\delta).
\end{equation}

On the event \eqref{Eq:StatUniformDeviation},
\[
\begin{aligned}
V(-\Gamma_{P,Q})
&=
\sup_{\Pi\in\mathcal P_{k,K;m,M}}
\int_\Theta
\left(
-\Gamma_{\widehat P_{n_P},\widehat Q_{n_Q}}+H
\right)
\,d\Pi \\
&\le
\sup_{\Pi\in\mathcal P_{k,K;m,M}}
\int_\Theta
-\Gamma_{\widehat P_{n_P},\widehat Q_{n_Q}}
\,d\Pi
+
\sup_{\Pi\in\mathcal P_{k,K;m,M}}
\int_\Theta H\,d\Pi \\
&\le
V(-\Gamma_{\widehat P_{n_P},\widehat Q_{n_Q}})
+
\varepsilon_{n_P,n_Q}(\delta).
\end{aligned}
\]
By Definition~\ref{Def:RWSD},
\[
Q\succeq_{\Phi;k,K;m,M}P
\quad\Longleftrightarrow\quad
V(-\Gamma_{P,Q})\le0.
\]
Consequently, on \eqref{Eq:StatUniformDeviation}, the empirical inequality
implies $V(-\Gamma_{P,Q})\le0$ and hence
$Q\succeq_{\Phi;k,K;m,M}P$. The event in
\eqref{Eq:FiniteSampleCertificationError} can therefore occur only when
\eqref{Eq:StatUniformDeviation} fails, which has probability at most
$\delta$.
\end{proof}

\subsection{The exact-order limit}
\label{App:RWSDExactLimit}

\begin{proposition}[Exact-order limit]
\label{Prop:RWSDExactLimit}
Suppose $k=0$, $\Theta$ is a topological space with its Borel
$\sigma$-algebra, $M$ has full support, and $\Gamma_{P,Q}$ is bounded and
continuous. For $K\ge1$, define
$V_K(L):=\sup\{\int_\Theta L\,d\Pi:\Pi\in\mathcal P(\Theta),\ \Pi\le KM\}$.
Then
\[
V_K(-\Gamma_{P,Q})
\uparrow
\sup_{\theta\in\Theta}\{-\Gamma_{P,Q}(\theta)\}
\qquad\text{as }K\to\infty.
\]
\end{proposition}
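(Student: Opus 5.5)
Write $L:=-\Gamma_{P,Q}$, which is bounded and continuous, and $s^\ast:=\sup_{\theta\in\Theta}L(\theta)$, which is finite. The proof has three steps: monotonicity of $V_K$ in $K$, the upper bound $V_K(L)\le s^\ast$, and a lower bound that approaches $s^\ast$ as $K\to\infty$. All three are elementary once we use the admissible-set description from the proof of Theorem~\ref{Thm:CVAR} with $k=0$ (there $\overline R=M$ and $C=K$).

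\emph{Monotonicity and upper bound.} If $K\le K'$, then $\Pi\le KM$ implies $\Pi\le K'M$. The feasible sets are therefore nested and $K\mapsto V_K(L)$ is nondecreasing. The feasible set is nonempty because $M$ itself satisfies $M\le KM$ for every $K\ge1$. For any probability measure $\Pi$ we have $\int L\,d\Pi\le s^\ast$, so $V_K(L)\le s^\ast$ for every $K$. Hence $\lim_{K\to\infty}V_K(L)$ exists and is at most $s^\ast$.

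\emph{Lower bound.} Fix $\varepsilon>0$ and choose $\theta_0$ with $L(\theta_0)>s^\ast-\varepsilon$. By continuity of $L$, the set $U:=\{\theta:L(\theta)>s^\ast-\varepsilon\}$ is open and contains $\theta_0$. Since $M$ has full support, $M(U)>0$. Put $\Pi_U:=M(\cdot\cap U)/M(U)$. This is a probability measure with $\Pi_U\le M(U)^{-1}M$, so it is feasible for every $K\ge1/M(U)$. For such $K$,
\[
V_K(L)\ge\int_\Theta L\,d\Pi_U\ge s^\ast-\varepsilon .
\]
Together with monotonicity, this gives $\lim_{K}V_K(L)\ge s^\ast-\varepsilon$. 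Since $\varepsilon$ is arbitrary, $V_K(-\Gamma_{P,Q})\uparrow s^\ast$.

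The only step that needs any care is ensuring that the near-maximizing region carries positive $M$-mass. This is exactly where continuity of $\Gamma_{P,Q}$ and full support of $M$ are used: continuity makes the superlevel set open, and full support gives every nonempty open set positive mass. No compactness is required, because we never need the supremum to be attained; it is enough to approximate it to within $\varepsilon$. We also need $U$ to be Borel so that $\Pi_U$ is well defined, and this holds because $U$ is open.
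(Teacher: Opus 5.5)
Your proof is correct and follows the paper's argument. Nested feasible sets give monotonicity, and the normalized restriction $M(\cdot\cap U)/M(U)$ becomes feasible once $K\ge 1/M(U)$, which gives the lower bound. The only cosmetic difference is that the paper first shows convergence to the $M$-essential supremum and then uses continuity and full support to identify it with the pointwise supremum, whereas you apply continuity and full support directly to the open superlevel set $\{L>s^\ast-\varepsilon\}$.
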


\begin{proof}
For $k=0$, $V_K(L)=\CVaR_K^M(L)$. The density-cap feasible sets are nested
as $K$ increases, so $K\mapsto\CVaR_K^M(L)$ is nondecreasing. The
risk-envelope form also gives
$\CVaR_K^M(L)\le\operatorname*{ess\,sup}_M L$. For any
$a<\operatorname*{ess\,sup}_M L$, the set $B=\{L>a\}$ has positive
$M$-measure, and the conditional law $M(\cdot\cap B)/M(B)$ is bounded by
$KM$ whenever $K\ge1/M(B)$. Hence $\CVaR_K^M(L)>a$ for all sufficiently
large $K$. It follows that $\CVaR_K^M(L)$ increases to the essential
supremum. Continuity and full support imply that the essential and pointwise
suprema coincide.
\end{proof}

\section{Construction of the Olist Empirical Distributions and Additional Results}
\label{App:OlistApplication}

This appendix explains how we construct the empirical comparison in
Section~\ref{Sec:OlistApplication}. All dominance statements concern the two
weighted empirical distributions defined below. We use $P$ for orders from sellers with lower pre-2018
fulfillment performance and $Q$ for orders from sellers with higher pre-2018
fulfillment performance, according to the rule below.

\subsection{Data files and dates}

We use five tables from the Brazilian E-Commerce Public Dataset by Olist:
orders, order items, order reviews, products, and the product-category
translation table \citep{OlistBrazilianEcommerce}.
The data contain seller deadlines at the item level, delivery dates at the
order level, and review rows linked to orders. Table~\ref{Tab:OlistTiming}
explains the fields used in the analysis.

\begin{table}[t]
\centering
\small
\caption{Olist fields used to classify sellers and measure later service.}
\label{Tab:OlistTiming}
\begin{tabular}{@{}>{\raggedright\arraybackslash}p{0.27\textwidth}
                        >{\raggedright\arraybackslash}p{0.20\textwidth}
                        >{\raggedright\arraybackslash}p{0.45\textwidth}@{}}
\toprule
Field or interval & Data level & Use in the application \\
\midrule
Purchase time & Order & Starts the customer order cycle, determines whether an order enters the 2018 comparison window, and fixes the pre-2018 product-category frequencies. \\
Approval time & Order & Starts the seller handling interval. \\
Shipping deadline & Item & Seller handoff deadline. For an order with several items from one seller, we use the earliest deadline. \\
Carrier-handoff time & Order & Ends seller handling, starts carrier transit, and determines whether a handoff enters the seller's pre-2018 history. \\
Customer-delivery time & Order & Ends carrier transit and the customer's total delivery time. Together with the review answer time, it determines when all eight service outcomes are observed. \\
Estimated-delivery date & Order & Customer-facing date used to measure whether delivery is late and by how much. \\
Review score and answer time & Review row & Measures satisfaction and when the review outcome is observed. When an order has several review rows, we use the latest answered review. \\
\bottomrule
\end{tabular}
\end{table}

We keep an order only when every item has a seller identifier and all items
come from the same seller. Orders with several items from that seller remain
eligible. If several product categories appear in one order, its category is
the category with the largest total item price; alphabetical order breaks a
tie. The elapsed times used for seller handling, carrier transit, and total
delivery must follow the event chronology and be nonnegative. Delivery delay
 is negative when an order arrives before the estimated date.

\subsection{Seller groups and later orders}

The classification date is January 1, 2018. A historical order enters seller
classification when a valid carrier handoff is observed before that date. We
do not condition this historical sample on whether the order is eventually
delivered or reviewed.
For a seller, the first classification measure is the average of
$1-\min\{h/8.4421,1\}$, where $h$ is the number of days from payment approval
to carrier handoff and 8.4421 is the pre-2018 95th percentile. This score is
larger when the seller hands the order to the carrier sooner. The second
measure is the share of handoffs completed no later than the earliest item
shipping deadline. We average the two measures. Sellers need at least ten
observed handoffs to be included. Among 649 eligible sellers, the bottom and top quartiles
contain 163 sellers each.

Separately from the seller classification, we use a pre-2018 calibration
sample to fix the transformations below. An order enters this sample only when
it is a one-seller delivered order and both its delivery and its selected
review answer are observed before January 1, 2018. The 39,819 calibration
orders fix the three time caps and the lower-tail thresholds; no 2018 service
outcome enters these choices.

The later comparison uses one-seller orders purchased from January 1 through
June 30, 2018. An order enters the service comparison when it is delivered,
has the dates needed to construct the eight outcomes below, and has an
answered review. The final samples contain 5,545 orders from 128 sellers in
group $P$ and 6,128 orders from 138 sellers in group $Q$.

We reweight the two groups so that their product-category and calendar-quarter
shares are identical. The 12 most frequent pre-2018 product categories remain separate, all
other categories form a thirteenth group, and these 13 groups are crossed with
the two quarters in the 2018 sample. If $n_{gb}$ is the number of orders from
group $g\in\{P,Q\}$ in category-quarter block $b$, the common mass assigned to
that block is
\[
\omega_b
=
\frac12\left(
\frac{n_{Pb}}{\sum_c n_{Pc}}
+
\frac{n_{Qb}}{\sum_c n_{Qc}}
\right).
\]
Each order from group $g$ and block $b$ receives weight $\omega_b/n_{gb}$.
Consequently, $\widehat P$ and $\widehat Q$ have the same total weight in
every category-quarter block.

\subsection{The eight service outcomes}

Each retained order is represented by eight service outcomes. All coordinates
lie in $[0,1]$ and are oriented so that larger values are preferred.
The first two coordinates measure delivery speed. Let
$T^{\rm carrier}$ be the number of days from carrier handoff to customer
delivery, and let $T^{\rm total}$ be the number of days from purchase to
customer delivery. Define
\[
x_1
=
1-\min\left\{\frac{T^{\rm carrier}}{20.8730},1\right\},
\qquad
x_2
=
1-\min\left\{\frac{T^{\rm total}}{25.7156},1\right\}.
\]
The denominators are the corresponding 95th percentiles in the calibration
sample. A value of
one means that the relevant time is zero; a time at or above the cap receives
zero.

The next three coordinates compare actual delivery with the estimated
delivery date. Let $D$ be customer-delivery time minus estimated-delivery time,
measured in days, so $D\le0$ means early or on time. Define
\[
x_3=\mathbf 1\{D\le0\},
\qquad
x_4=\mathbf 1\{D\le3\},
\qquad
x_5=1-\min\left\{\frac{D_+}{25.6053},1\right\}.
\]
Thus $x_3$ indicates on-time delivery and $x_4$ indicates delivery no more
than three days late. The value $x_5$ equals one for early or on-time delivery
and then decreases linearly with positive delay until it reaches zero at the
cap. The denominator is the 95th percentile of positive delay in the calibration
sample.

The last three coordinates measure customer satisfaction. For an order with
several review rows, we use the review with the latest answer time. The review
identifier is used only when two answer times are equal. If $R\in\{1,\ldots,5\}$ is this final
rating, define
\[
x_6=\frac{R-1}{4},
\qquad
x_7=\mathbf 1\{R\ge3\},
\qquad
x_8=\mathbf 1\{R=5\}.
\]
The first value uses the full 1--5 rating scale, the second distinguishes a
one- or two-star review from a review of at least three stars, and the third indicates a five-star review.

The coordinate comparison assigns weight $1/4$ to each delivery-speed
coordinate and $1/12$ to each of the other six coordinates. Figure~\ref{Fig:OlistScoreMeans} shows the means of the four service scores. Every mean is higher under $\widehat Q$. 

\begin{figure}[t]
\centering
\includegraphics[width=0.88\textwidth]{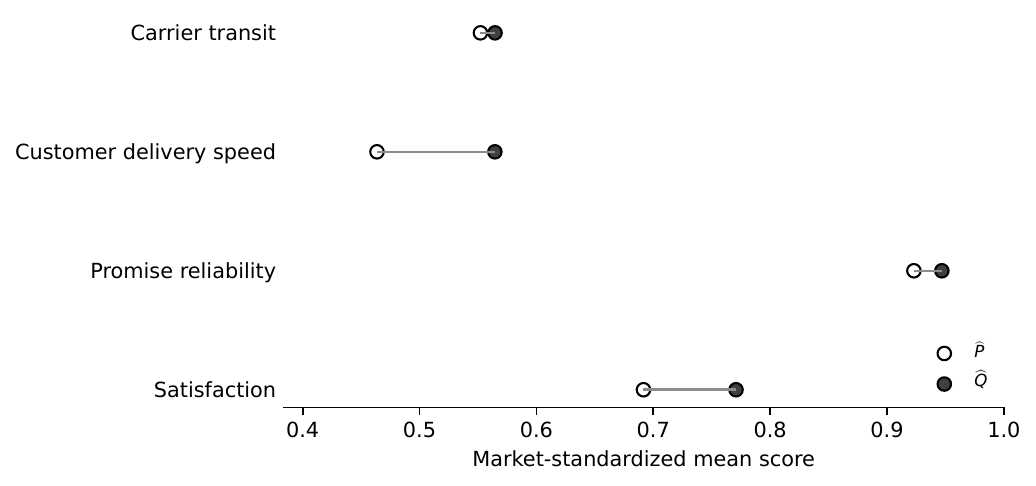}
\caption{Means of the four service scores after reweighting both seller groups
to the same product-category and quarter shares. Open circles denote
$\widehat P$ and filled circles denote $\widehat Q$.}
\label{Fig:OlistScoreMeans}
\end{figure}

\subsection{Four service scores and the lower-tail indicators}

The four scores in \eqref{Eq:OlistFourScores} have weights
$(1/6,1/3,1/4,1/4)$.
For the 75 lower-tail indicators, let $z(x)=(s_1(x),\ldots,s_4(x))$. For
each of the five quantile levels $0.10$, $0.25$, $0.50$, $0.75$, and $0.90$,
and each nonempty subset $A\subseteq\{1,2,3,4\}$, let $q_{rj}$ be the $r$-quantile of score $j$ in the calibration sample and
define
\[
s_{r,A}(x)
=
1-\mathbf 1\{z_j(x)\le q_{rj}\text{ for every }j\in A\}.
\]
Thus $s_{r,A}(x)=0$ on the selected lower-tail event and equals one otherwise.
The weight is $1/\{20\binom{4}{|A|}\}$. Hence each quantile level receives total weight $1/5$. Conditional on a
quantile level, each subset size receives one quarter of that level's weight.

\subsection{Transport computation and complete CSD results}

For each finite collection of scores, we construct 
$d_s(x,y)=\sum_j\lambda_j|s_j(x)-s_j(y)|$ and solve 
\eqref{Eq:FiniteScoreWassersteinLP}. The implementation uses version 0.9.7 of the Python Optimal Transport (POT) package
\citep{FlamaryEtAl2021POT}. The two calculations on the full empirical supports
each contain $5{,}545\times6{,}128=33{,}979{,}760$ transport variables. The
75 binary indicators take only 74 distinct values in group $P$ and 51 in
group $Q$; combining observations with identical 75-indicator vectors reduces that
transport calculation to 3,774 variables.

For the coordinate scores, exact empirical FOSD would require
$W_{1,\beta}(\widehat P,\widehat Q)=
\Delta_\beta(\widehat P,\widehat Q)$. The observed gap between these two
quantities is $0.0000446633$. A separate check of carrier transit gives
$\max_t\{\widehat F_Q(t)-\widehat F_P(t)\}=0.005154$ at $t=0.935950$.
Both calculations show that exact empirical FOSD fails. The minimum expected score
increase reported below is $\{W_s+\Delta_s\}/2$.

\begin{table}[t]
\centering
\small
\caption{Complete CSD quantities for the two reweighted empirical
distributions.}
\label{Tab:OlistFullResults}
\begin{tabular}{@{}lccccc@{}}
\toprule
Scores & $\Delta_s$ & $W_s$ & $L_s^\star$ & Min. increase & $\gamma^\star$ \\
\midrule
Four service scores
& 0.061484 & 0.061514 & 0.00001512 & 0.061499 & 0.00024579 \\
Eight coordinates
& 0.061484 & 0.061528 & 0.00002233 & 0.061506 & 0.00036308 \\
Seventy-five lower-tail indicators
& 0.047662 & 0.047735 & 0.00003680 & 0.047699 & 0.00077155 \\
\bottomrule
\end{tabular}
\end{table}

For the four service scores, Table~\ref{Tab:OlistMovement} separates the
expected increases and decreases under one optimal coupling. The values
already include the score weights. Their column totals are the minimum
expected score increase and $L_s^\star$, respectively.

\begin{table}[!htbp]
\centering
\small
\caption{Expected increases and decreases under one optimal coupling for the
four service scores. Shares use the total increase and total decrease as
separate denominators.}
\label{Tab:OlistMovement}
\begin{tabular}{@{}lrrrr@{}}
\toprule
Score & Increase & Share of increase & Decrease & Share of decrease \\
\midrule
Carrier transit     & 0.002097 & 3.41\%  & 0.00001500 & 99.24\% \\
Customer delivery   & 0.033623 & 54.67\% & 0.00000011 & 0.76\% \\
Delivery date      & 0.005978 & 9.72\%  & 0          & 0 \\
Reviews              & 0.019801 & 32.20\% & 0          & 0 \\
\bottomrule
\end{tabular}
\end{table}

The minimum expected decrease, minimum expected increase, and
$\gamma^\star$ in
Table~\ref{Tab:OlistFullResults} do not depend on which optimal coupling is
selected. The shares in Table~\ref{Tab:OlistMovement} may differ
across optimal couplings, so the table describes the coupling returned by the
solver.

\subsection{Details of the RWSD comparison}

Among the 75 candidate lower-tail indicators, 11 are always zero because all
thresholds used by the indicator equal one. Some remaining definitions
describe the same lower-tail event after constraints with threshold one are
removed. We delete the always-zero candidates, combine the weights of
duplicates, and normalize the remaining weights. This uses the pre-2018
threshold definitions, not the 2018 probability differences. It leaves 23
distinct events with weights $w_j>0$, $j=1,\ldots,23$.

For event $L_j$, the empirical loss is
$\widehat Q(L_j)-\widehat P(L_j)$. We set
$m=M=\sum_{j=1}^{23}w_j\delta_j$, and also use $w$ to denote this
probability measure.
For $0<a<1$, take $k=a$ and $K=1/a$; $a=1$ fixes the weights at $w$.
The allowed weights are
$aw_j\le\pi_j\le w_j/a$ for $j=1,\ldots,23$, and
$\sum_{j=1}^{23}\pi_j=1$. Theorem~\ref{Thm:CVAR} gives,
for $0<a<1$,
\[
V_a
=
a\sum_{j=1}^{23}w_j\{\widehat Q(L_j)-\widehat P(L_j)\}
+
(1-a)\CVaR_{(1+a)/a}^{w}
\bigl((\widehat Q(L_j)-\widehat P(L_j))_{j=1}^{23}\bigr).
\]
The implementation computes the same value by sorting the event losses and
assigning the available weight to the largest losses, as in
\eqref{Eq:FiniteWorstLoss}.

\begin{table}[!htbp]
\centering
\small
\caption{RWSD results for the selected lower-tail events. A positive event
loss means that $\widehat Q$ places more probability in that lower-tail
region.}
\label{Tab:OlistRWSDResults}
\begin{tabular}{@{}lr@{}}
\toprule
Quantity & Value \\
\midrule
Candidate lower-tail indicators & 75 \\
Indicators that are always zero & 11 \\
Distinct remaining events & 23 \\
Events with positive loss & 1 \\
Loss under the reference weights & $-0.054471$ \\
Largest event loss & $0.000883$ \\
Smallest $a$ for which RWSD holds & $0.015956$ \\
Weight on the unfavorable event at that value & $0.9848$ \\
\bottomrule
\end{tabular}
\end{table}

The single positive loss is the event that carrier-transit speed is at most
0.899180, its pre-2018 90th-percentile threshold. Its probability is 0.869046
under $\widehat P$ and 0.869930 under $\widehat Q$. At
$a^\star=0.015956$, the maximizing weights assign $98.48\%$ to this event and
the minimum permitted weight to the others.

%The replication code includes robustness checks and also show that we get similar results by varying the carrier share or other weights. 

\paragraph{Data and code availability.}
The replication package contains the Python and Google Colab code, the five
Olist data files used here, exact file hashes, the tables
and figures. The data are distributed
as the Brazilian E-Commerce Public Dataset by Olist under the Creative Commons
Attribution--NonCommercial--ShareAlike 4.0 license
\citep{OlistBrazilianEcommerce}.

\bibliographystyle{abbrvnat}
\bibliography{CSD_OR}

\begin{thebibliography}{39}
\providecommand{\natexlab}[1]{#1}
\providecommand{\url}[1]{\texttt{#1}}
\expandafter\ifx\csname urlstyle\endcsname\relax
  \providecommand{\doi}[1]{doi: #1}\else
  \providecommand{\doi}{doi: \begingroup \urlstyle{rm}\Url}\fi

\bibitem[Armbruster and Delage(2015)]{ArmbrusterDelage2015}
B.~Armbruster and E.~Delage.
\newblock Decision making under uncertainty when preference information is incomplete.
\newblock \emph{Management Science}, 61\penalty0 (1):\penalty0 111--128, 2015.
\newblock \doi{10.1287/mnsc.2014.2059}.

\bibitem[Ba{\'{\i}}llo et~al.(2025)Ba{\'{\i}}llo, C{\'a}rcamo, and Mora-Corral]{BailloCarcamoMoraCorral2025}
A.~Ba{\'{\i}}llo, J.~C{\'a}rcamo, and C.~Mora-Corral.
\newblock Tests for almost stochastic dominance.
\newblock \emph{Journal of Business \& Economic Statistics}, 43\penalty0 (2):\penalty0 338--350, 2025.
\newblock \doi{10.1080/07350015.2024.2374274}.

\bibitem[Barrett and Donald(2003)]{BarrettDonald2003}
G.~F. Barrett and S.~G. Donald.
\newblock Consistent tests for stochastic dominance.
\newblock \emph{Econometrica}, 71\penalty0 (1):\penalty0 71--104, 2003.
\newblock \doi{10.1111/1468-0262.00390}.

\bibitem[Bauer et~al.(2025)Bauer, M{\'e}moli, Needham, and Nishino]{BauerMemoliNeedhamNishino2025}
M.~Bauer, F.~M{\'e}moli, T.~Needham, and M.~Nishino.
\newblock The {$Z$}-gromov-wasserstein distance.
\newblock \emph{Journal of Machine Learning Research}, 26\penalty0 (291):\penalty0 1--57, 2025.

\bibitem[Boucheron et~al.(2013)Boucheron, Lugosi, and Massart]{BoucheronLugosiMassart2013}
S.~Boucheron, G.~Lugosi, and P.~Massart.
\newblock \emph{Concentration Inequalities: A Nonasymptotic Theory of Independence}.
\newblock Oxford University Press, Oxford, UK, 2013.
\newblock ISBN 9780199535255.
\newblock \doi{10.1093/acprof:oso/9780199535255.001.0001}.

\bibitem[Cover and Thomas(2006)]{CoverThomas2006}
T.~M. Cover and J.~A. Thomas.
\newblock \emph{Elements of Information Theory}.
\newblock John Wiley \& Sons, Hoboken, NJ, 2 edition, 2006.
\newblock ISBN 9780471241959.
\newblock \doi{10.1002/047174882X}.

\bibitem[Davidson and Duclos(2000)]{DavidsonDuclos2000}
R.~Davidson and J.-Y. Duclos.
\newblock Statistical inference for stochastic dominance and for the measurement of poverty and inequality.
\newblock \emph{Econometrica}, 68\penalty0 (6):\penalty0 1435--1464, 2000.
\newblock \doi{10.1111/1468-0262.00167}.

\bibitem[del Barrio et~al.(2018)del Barrio, Cuesta-Albertos, and Matr{\'a}n]{DelBarrioCuestaAlbertosMatran2018}
E.~del Barrio, J.~A. Cuesta-Albertos, and C.~Matr{\'a}n.
\newblock An optimal transportation approach for assessing almost stochastic order.
\newblock In E.~Gil, E.~Gil, J.~Gil, and M.~{\'A}. Gil, editors, \emph{The Mathematics of the Uncertain: A Tribute to Pedro Gil}, volume 142 of \emph{Studies in Systems, Decision and Control}, pages 33--44. Springer International Publishing, Cham, Switzerland, 2018.
\newblock \doi{10.1007/978-3-319-73848-2_3}.

\bibitem[Dentcheva and Yi(2025)]{DentchevaYi2025}
D.~Dentcheva and Y.~Yi.
\newblock Relaxation of stochastic dominance constraints via optimal mass transport.
\newblock \emph{SIAM Journal on Optimization}, 35\penalty0 (4):\penalty0 2452--2473, 2025.
\newblock \doi{10.1137/24M172216X}.

\bibitem[Duclos et~al.(2006)Duclos, Sahn, and Younger]{DuclosSahnYounger2006}
J.-Y. Duclos, D.~Sahn, and S.~D. Younger.
\newblock Robust multidimensional spatial poverty comparisons in ghana, madagascar, and uganda.
\newblock \emph{The World Bank Economic Review}, 20\penalty0 (1):\penalty0 91--113, 2006.
\newblock \doi{10.1093/wber/lhj005}.

\bibitem[Flamary et~al.(2021)Flamary, Courty, Gramfort, Alaya, Boisbunon, Chambon, Chapel, Corenflos, Fatras, Fournier, Gautheron, Gayraud, Janati, Rakotomamonjy, Redko, Rolet, Schutz, Seguy, Sutherland, Tavenard, Tong, and Vayer]{FlamaryEtAl2021POT}
R.~Flamary, N.~Courty, A.~Gramfort, M.~Z. Alaya, A.~Boisbunon, S.~Chambon, L.~Chapel, A.~Corenflos, K.~Fatras, N.~Fournier, L.~Gautheron, N.~T.~H. Gayraud, H.~Janati, A.~Rakotomamonjy, I.~Redko, A.~Rolet, A.~Schutz, V.~Seguy, D.~J. Sutherland, R.~Tavenard, A.~Tong, and T.~Vayer.
\newblock {POT}: Python optimal transport.
\newblock \emph{Journal of Machine Learning Research}, 22\penalty0 (78):\penalty0 1--8, 2021.
\newblock URL \url{https://www.jmlr.org/papers/v22/20-451.html}.

\bibitem[Hu and Stepanyan(2017)]{HuStepanyan2017}
J.~Hu and G.~Stepanyan.
\newblock Optimization with reference-based robust preference constraints.
\newblock \emph{SIAM Journal on Optimization}, 27\penalty0 (4):\penalty0 2230--2257, 2017.
\newblock \doi{10.1137/16M1105050}.

\bibitem[Hu et~al.(2014)Hu, Homem-de Mello, and Mehrotra]{HuHomemDeMelloMehrotra2014}
J.~Hu, T.~Homem-de Mello, and S.~Mehrotra.
\newblock Stochastically weighted stochastic dominance concepts with an application in capital budgeting.
\newblock \emph{European Journal of Operational Research}, 232\penalty0 (3):\penalty0 572--583, 2014.
\newblock \doi{10.1016/j.ejor.2013.08.007}.

\bibitem[Junov{\'a} and Kopa(2025)]{JunovaKopa2025}
J.~Junov{\'a} and M.~Kopa.
\newblock Measures of stochastic non-dominance in portfolio optimization.
\newblock \emph{European Journal of Operational Research}, 321\penalty0 (1):\penalty0 269--283, 2025.
\newblock \doi{10.1016/j.ejor.2024.08.029}.

\bibitem[Kamae et~al.(1977)Kamae, Krengel, and O'Brien]{KamaeKrengelOBrien1977}
T.~Kamae, U.~Krengel, and G.~L. O'Brien.
\newblock Stochastic inequalities on partially ordered spaces.
\newblock \emph{The Annals of Probability}, 5\penalty0 (6):\penalty0 899--912, 1977.
\newblock \doi{10.1214/aop/1176995659}.

\bibitem[Kamihigashi and Stachurski(2020)]{KamihigashiStachurski2020}
T.~Kamihigashi and J.~Stachurski.
\newblock Partial stochastic dominance via optimal transport.
\newblock \emph{Operations Research Letters}, 48\penalty0 (5):\penalty0 584--586, 2020.
\newblock \doi{10.1016/j.orl.2020.07.003}.

\bibitem[Leshno and Levy(2002)]{LeshnoLevy2002}
M.~Leshno and H.~Levy.
\newblock Preferred by ``all'' and preferred by ``most'' decision makers: Almost stochastic dominance.
\newblock \emph{Management Science}, 48\penalty0 (8):\penalty0 1074--1085, 2002.
\newblock \doi{10.1287/mnsc.48.8.1074.169}.

\bibitem[Light and Perlroth(2021)]{LightPerlroth2021}
B.~Light and A.~Perlroth.
\newblock The family of {$\alpha,[a,b]$} stochastic orders: Risk vs.\ expected value.
\newblock \emph{Journal of Mathematical Economics}, 96:\penalty0 102520, 2021.
\newblock \doi{10.1016/j.jmateco.2021.102520}.

\bibitem[Linton et~al.(2005)Linton, Maasoumi, and Whang]{LintonMaasoumiWhang2005}
O.~Linton, E.~Maasoumi, and Y.-J. Whang.
\newblock Consistent testing for stochastic dominance under general sampling schemes.
\newblock \emph{The Review of Economic Studies}, 72\penalty0 (3):\penalty0 735--765, 2005.
\newblock \doi{10.1111/j.1467-937X.2005.00350.x}.

\bibitem[Lizyayev and Ruszczy{\'n}ski(2012)]{LizyayevRuszczynski2012}
A.~Lizyayev and A.~Ruszczy{\'n}ski.
\newblock Tractable almost stochastic dominance.
\newblock \emph{European Journal of Operational Research}, 218\penalty0 (2):\penalty0 448--455, 2012.
\newblock \doi{10.1016/j.ejor.2011.11.019}.

\bibitem[Luo et~al.(2024)Luo, Chen, and Jaillet]{LuoChenJaillet2025}
C.~Luo, P.~Chen, and P.~Jaillet.
\newblock Portfolio optimization based on almost second-degree stochastic dominance.
\newblock \emph{Management Science}, 71\penalty0 (8):\penalty0 7029--7055, 2024.
\newblock \doi{10.1287/mnsc.2022.01092}.

\bibitem[M{\"u}ller(1997)]{Muller1997StochasticOrders}
A.~M{\"u}ller.
\newblock Stochastic orders generated by integrals: A unified study.
\newblock \emph{Advances in Applied Probability}, 29\penalty0 (2):\penalty0 414--428, 1997.
\newblock \doi{10.2307/1428010}.

\bibitem[M{\"u}ller(2024)]{Muller2024StochasticOrders}
A.~M{\"u}ller.
\newblock Stochastic orders under uncertainty.
\newblock In J.~Ansari, S.~Fuchs, W.~Trutschnig, M.~A. Lubiano, M.~{\'A}. Gil, P.~Grzegorzewski, and O.~Hryniewicz, editors, \emph{Combining, Modelling and Analyzing Imprecision, Randomness and Dependence}, volume 1458 of \emph{Advances in Intelligent Systems and Computing}, pages 302--308, Cham, 2024. Springer.
\newblock \doi{10.1007/978-3-031-65993-5_37}.

\bibitem[M{\"u}ller and Stoyan(2002)]{MullerStoyan2002}
A.~M{\"u}ller and D.~Stoyan.
\newblock \emph{Comparison Methods for Stochastic Models and Risks}.
\newblock John Wiley \& Sons, Chichester, UK, 2002.
\newblock ISBN 9780471494461.

\bibitem[M{\"u}ller and Wiesel(2026)]{MullerWiesel2026}
A.~M{\"u}ller and J.~Wiesel.
\newblock Almost stochastic dominance via optimal transport.
\newblock arXiv preprint arXiv:2607.28215, 2026.

\bibitem[M{\"u}ller et~al.(2016)M{\"u}ller, Scarsini, Tsetlin, and Winkler]{MullerScarsiniTsetlinWinkler2017}
A.~M{\"u}ller, M.~Scarsini, I.~Tsetlin, and R.~L. Winkler.
\newblock Between first- and second-order stochastic dominance.
\newblock \emph{Management Science}, 63\penalty0 (9):\penalty0 2933--2947, 2016.
\newblock \doi{10.1287/mnsc.2016.2486}.

\bibitem[M{\"u}ller et~al.(2025)M{\"u}ller, Scarsini, Tsetlin, and Winkler]{MullerScarsiniTsetlinWinkler2025}
A.~M{\"u}ller, M.~Scarsini, I.~Tsetlin, and R.~L. Winkler.
\newblock Multivariate almost stochastic dominance: Transfer characterizations and sufficient conditions under dependence uncertainty.
\newblock \emph{Operations Research}, 73\penalty0 (2):\penalty0 879--893, 2025.
\newblock \doi{10.1287/opre.2022.0596}.

\bibitem[O'Brien and Scarsini(1991)]{OBrienScarsini1991}
G.~L. O'Brien and M.~Scarsini.
\newblock Multivariate stochastic dominance and moments.
\newblock \emph{Mathematics of Operations Research}, 16\penalty0 (2):\penalty0 382--389, 1991.
\newblock \doi{10.1287/moor.16.2.382}.

\bibitem[{Olist}(2018)]{OlistBrazilianEcommerce}
{Olist}.
\newblock Brazilian e-commerce public dataset by olist.
\newblock \url{https://www.kaggle.com/datasets/olistbr/brazilian-ecommerce}, 2018.
\newblock Accessed August 28, 2026.

\bibitem[Rioux et~al.(2024)Rioux, Nitsure, Rigotti, Greenewald, and Mroueh]{RiouxNitsureRigottiGreenewaldMroueh2024}
G.~Rioux, A.~Nitsure, M.~Rigotti, K.~Greenewald, and Y.~Mroueh.
\newblock Multivariate stochastic dominance via optimal transport and applications to models benchmarking.
\newblock In \emph{Advances in Neural Information Processing Systems}, volume~37, pages 39190--39223. Curran Associates, Inc., 2024.
\newblock \doi{10.52202/079017-1237}.
\newblock URL \url{https://proceedings.neurips.cc/paper_files/paper/2024/hash/4537592f9594a0522da99566b90380cc-Abstract-Conference.html}.

\bibitem[Rockafellar and Uryasev(2002)]{RockafellarUryasev2002}
R.~T. Rockafellar and S.~Uryasev.
\newblock Conditional value-at-risk for general loss distributions.
\newblock \emph{Journal of Banking \& Finance}, 26\penalty0 (7):\penalty0 1443--1471, 2002.
\newblock \doi{10.1016/S0378-4266(02)00271-6}.

\bibitem[Shaked and Shanthikumar(2007)]{ShakedShanthikumar2007}
M.~Shaked and J.~G. Shanthikumar.
\newblock \emph{Stochastic Orders}.
\newblock Springer Series in Statistics. Springer, New York, NY, 2007.
\newblock ISBN 9780387329154.
\newblock \doi{10.1007/978-0-387-34675-5}.

\bibitem[Shapiro et~al.(2021)Shapiro, Dentcheva, and Ruszczy{\'n}ski]{ShapiroDentchevaRuszczynski2021}
A.~Shapiro, D.~Dentcheva, and A.~Ruszczy{\'n}ski.
\newblock \emph{Lectures on Stochastic Programming: Modeling and Theory}, volume~28 of \emph{MOS-SIAM Series on Optimization}.
\newblock Society for Industrial and Applied Mathematics and the Mathematical Optimization Society, Philadelphia, PA, 3 edition, 2021.
\newblock ISBN 9781611976588.
\newblock \doi{10.1137/1.9781611976595}.

\bibitem[Song and Sun(2026)]{SongSun2026}
X.~Song and Z.~Sun.
\newblock Almost dominance: Inference and application.
\newblock \emph{Econometric Reviews}, 45\penalty0 (2):\penalty0 283--302, 2026.
\newblock \doi{10.1080/07474938.2025.2565642}.

\bibitem[Strassen(1965)]{Strassen1965}
V.~Strassen.
\newblock The existence of probability measures with given marginals.
\newblock \emph{The Annals of Mathematical Statistics}, 36\penalty0 (2):\penalty0 423--439, 1965.
\newblock \doi{10.1214/aoms/1177700153}.

\bibitem[Tan(2015)]{Tan2015WeightedASD}
C.~H. Tan.
\newblock Weighted almost stochastic dominance: Revealing the preferences of most decision makers in the {St. Petersburg} paradox.
\newblock \emph{Decision Analysis}, 12\penalty0 (2):\penalty0 74--80, 2015.
\newblock \doi{10.1287/deca.2014.0310}.

\bibitem[Tsetlin and Winkler(2018)]{TsetlinWinkler2018}
I.~Tsetlin and R.~L. Winkler.
\newblock Multivariate almost stochastic dominance.
\newblock \emph{Journal of Risk and Insurance}, 85\penalty0 (2):\penalty0 431--445, 2018.
\newblock \doi{10.1111/jori.12222}.

\bibitem[Tsetlin et~al.(2015)Tsetlin, Winkler, Huang, and Tzeng]{TsetlinWinklerHuangTzeng2015}
I.~Tsetlin, R.~L. Winkler, R.~J. Huang, and L.~Y. Tzeng.
\newblock Generalized almost stochastic dominance.
\newblock \emph{Operations Research}, 63\penalty0 (2):\penalty0 363--377, 2015.
\newblock \doi{10.1287/opre.2014.1340}.

\bibitem[Villani(2009)]{Villani2009OptimalTransport}
C.~Villani.
\newblock \emph{Optimal Transport: Old and New}, volume 338 of \emph{Grundlehren der mathematischen Wissenschaften}.
\newblock Springer, Berlin, Germany, 2009.
\newblock ISBN 9783540710509.
\newblock \doi{10.1007/978-3-540-71050-9}.

\end{thebibliography}

\end{document}